\documentclass{amsart}

\usepackage{amssymb}
\usepackage{mathrsfs}
\usepackage{multirow}
\usepackage{hhline}
\usepackage{bbm}
\usepackage{tikz}
\usetikzlibrary{decorations.pathmorphing,cd}
\usepackage[colorlinks=true]{hyperref}
\usepackage[capitalise]{cleveref}

% Theorems etc

\newtheorem{theorem}{Theorem}[section]
\newtheorem*{theorem*}{Theorem}
\newtheorem{proposition}[theorem]{Proposition}
\newtheorem{lemma}[theorem]{Lemma}
\newtheorem{corollary}[theorem]{Corollary}

\theoremstyle{definition}
\newtheorem{definition}[theorem]{Definition}

\newtheorem{claim}{Claim}
\crefname{claim}{Claim}{Claims}
\setcounter{claim}{-1}

\theoremstyle{remark}
\newtheorem*{remark}{Remark}
\newtheorem*{convention}{Convention}
\newtheorem*{abuse}{(Abuse of) notation}

\crefname{step}{Step}{Steps}
\setcounter{step}{-1}

% Mathchar
\mathchardef\mhyphen="2D

% Mathscr
\newcommand{\C}{\mathscr{C}}
\newcommand{\D}{\mathscr{D}}
\newcommand{\V}{\mathscr{V}}

% Mathcal
\newcommand{\E}{\mathcal{E}}
\renewcommand{\H}{\mathcal{H}}
\newcommand{\I}{\mathcal{I}}
\newcommand{\J}{\mathcal{J}}

% Sets
\newcommand{\NN}{{\mathbb{N}}}

\newcommand{\shuffle}{\mathbf{Shfl}}
\newcommand{\celll}{\operatorname{cell}}

% Operations etc
\newcommand{\id}{\mathrm{id}}
\newcommand{\op}{^{\mathrm{op}}}
\newcommand{\co}{^{\mathrm{co}}}

% Operator name

\newcommand{\colim}{\operatorname{colim}}
\newcommand{\im}{\operatorname{im}}

% Small categories etc
\newcommand{\two}{\mathbbm{2}}
\newcommand{\cell}{\Theta_2}
\newcommand{\spine}{\Xi}
\newcommand{\horn}{\Lambda}

\newcommand{\hattheta}{\widehat{\Theta_2}}
\newcommand{\hatdelta}{\widehat{\Delta}}
\newcommand{\wreath}{\hatdelta\wr\hatdelta}

% Large categories
\newcommand{\twoCat}{2\mhyphen\underline{\mathrm{Cat}}}
\newcommand{\Cat}{\underline{\mathrm{Cat}}}
\newcommand{\CAT}{\underline{\mathrm{CAT}}}
\newcommand{\Set}{\underline{\mathrm{Set}}}

% Cells etc
\newcommand{\nq}{{[n;\mathbf{q}]}}
\newcommand{\np}{{[n;\mathbf{p}]}}

\newcommand{\nqd}{{[n-1;\mathbf{q'}]}}
\newcommand{\nqdd}{{[n-1;\mathbf{q''}]}}

\renewcommand{\mp}{{[m;\mathbf{p}]}}

\newcommand{\pp}{\mathbf{p}}
\newcommand{\qq}{\mathbf{q}}

\newcommand{\aalpha}{{\boldsymbol{\alpha}}}
\newcommand{\bbeta}{{\boldsymbol{\beta}}}
\newcommand{\iid}{\mathbf{id}}
\newcommand{\zzero}{{\boldsymbol{0}}}

% Arrows
\newcommand{\incl}{\hookrightarrow}
\newcommand{\defeq}{\overset{\text{def}}{=}}
\newcommand{\glue}{\arrow [dr, phantom, "\ulcorner" very near end, "\lrcorner" very near start, start anchor = south, end anchor = north]}
\newcommand{\pullback}{\arrow [dr, phantom, "\lrcorner" very near start, start anchor = south, end anchor = north]}

% Gray
\newcommand{\tensor}{\otimes}
\newcommand{\ttensor}{\boxtimes}
\newcommand{\A}{\mathscr{A}}
\newcommand{\B}{\mathscr{B}}

\title{Inner horns for 2-quasi-categories}
\author{Yuki Maehara}
\address{Centre of Australian Category Theory, Macquarie University, NSW 2109, Australia}
\email{yuki.maehara@mq.edu.au}

\subjclass[2010]{Primary 18G55, 55U35, 55U40; Secondary 18D05, 18G30, 55U10}
\keywords{2-quasi-category, inner horn, model category}

\begin{document}

\begin{abstract}
	Dimitri Ara's \emph{2-quasi-categories}, which are certain presheaves over Andr\'{e} Joyal's \emph{2-cell category} $\Theta_2$, are an example of a concrete model that realises the abstract notion of $(\infty,2)$-category.
	In this paper, we prove that the 2-quasi-categories and the fibrations into them can be characterised using the \emph{inner horn inclusions} and the \emph{equivalence extensions} introduced by David Oury.
	These maps are more tractable than the maps that Ara originally used and therefore our result can serve as a combinatorial foundation for the study of 2-quasi-categories.
\end{abstract}

\maketitle

\section{Introduction}
There are several different models for $(\infty,1)$-categories, \emph{e.g.}~quasi-categories, complete Segal spaces, simplicial categories, etc.
Amongst these the most prominent is the presentation of $(\infty,1)$-categories as quasi-categories.
In addition to their being the most economical model among the geometric ones, many authors, most notably Andr\'{e} Joyal \cite{Joyal:Kan,Joyal:applications} and Jacob Lurie \cite{HTT,HA}, have shown that one can ``do category theory'' in quasi-categories.
In a similar vein, our ultimate goal is to ``do 2-category theory'' in 2-quasi-categories.

As their name suggests, 2-quasi-categories are an $(\infty,2)$-analogue to the $(\infty,1)$-notion of quasi-categories.
In \cite{Ara:nqcat}, Dimitri Ara constructed for each $n \ge 1$ a model structure on $\widehat{\Theta_n}$ which presents $(\infty,n)$-categories.
The \emph{$n$-quasi-categories} are the fibrant objects in $\widehat{\Theta_n}$ with respect to this structure.
In the case $n=1$ Ara's model structure coincides with Joyal's.
The fibrant objects of the case $n=2$, the 2-quasi-categories, are the subject of this work.

We originally wanted to understand the $\cell$-version of the (\emph{lax}) \emph{Gray tensor product}, and the main result of this paper was developed as a combinatorial tool for proving that tensor product to be left Quillen (which will be done in a future paper).
In \cite{Ara:nqcat}, Ara characterised not only the 2-quasi-categories, but also the fibrations into them.
More precisely, he proved them to be exactly those maps with the right lifting property with respect to a set $\J_A$ of monomorphisms.
Thus to prove the tensor product is left Quillen, it would suffice to check that it interacts nicely with the maps in $\J_A$.
However, the definition of $\J_A$ is complicated and not very easy to deal with.
The purpose of this paper is to provide an alternative set which is combinatorially more tractable.

More specifically, we show the set $\J_O$ of \emph{inner horn inclusions} and \emph{equivalence extensions}, introduced by David Oury in his PhD thesis \cite{Oury}, can be used in place of $\J_A$.
These maps are constructed from their simplicial counterparts using the \emph{box product} $\square : \wreath \to \hattheta$, analogously to how the bisimplicial horns may be constructed from the simplicial ones using the functor $\square : \hatdelta \times \hatdelta \to \widehat{\Delta \times \Delta}$.
The precise construction and other background material will be reviewed in \cref{background}.

The most technical (and also the longest) section of this paper is \cref{O-anodyne extensions suffice} where we compare the sets $\J_A$ and $\J_O$ and the class of trivial cofibrations.
In \cref{alternative horns} we consider a different notion of inner horn, namely the sub-$\cell$-sets of the representables generated by all but one codimension-one faces.
\cref{horizontal equivalence section} is very short and devoted to proving that the infinite family of horizontal equivalences (contained in both $\J_A$ and $\J_O$) can in fact be replaced by a single map as long as we keep the inner horn inclusions in the defining set of monomorphisms.
In \cref{last section} we prove the main theorem of the work (\cref{main theorem}).
\cref{teaser} illustrates how this theorem will be used in our future paper to prove that the Gray tensor product is left Quillen.

\section{Background}\label{background}

\subsection{Simplicial sets and shuffles}\label{simplicial sets}
As usual, we denote by $\Delta$ the category of non-empty finite ordinals $[n] \defeq \{0, \dots, n\}$ and order-preserving maps.
The morphisms in $\Delta$ will be called \emph{simplicial operators}.
We often denote a simplicial operator $\alpha : [m] \to [n]$ by its ``image'' $\{\alpha(0),\dots,\alpha(m)\}$; \emph{e.g.}~$\{0,2\} = \delta^1 : [1] \to [2]$ is the 1st elementary face operator.

We will write $\hatdelta$ for the category $[\Delta\op,\Set]$ of \emph{simplicial sets}, and write $\Delta[n]$ for the presheaf represented by $[n] \in \Delta$.
If $X \in \hatdelta$ is a simplicial set, $x \in X_n$ and $\alpha : [m] \to [n]$ is a simplicial operator, then we will write $x \cdot \alpha$ for the image of $x$ under $X(\alpha)$.

\begin{definition}
	An \emph{$(m,n)$-shuffle} is a non-degenerate $(m+n)$-simplex in the product $\Delta[m] \times \Delta[n]$.
\end{definition}
Equivalently, an $(m,n)$-shuffle $\langle\alpha,\alpha'\rangle$ consists of two surjections
\[
\begin{gathered}
	\alpha : [m+n] \to [m],\\
	\alpha': [m+n] \to [n]
\end{gathered}
\]
in $\Delta$ such that $\alpha(i) + \alpha'(i) = i$ for all $i \in [m+n]$.
We write $\shuffle(m,n)$ for the set of $(m,n)$-shuffles.
Note that an $(m,n)$-shuffle $\langle \alpha, \alpha' \rangle$ is uniquely determined by the surjection $\alpha : [m+n] \to [m]$ since $\alpha'$ can be recovered as $\alpha'(i) = i-\alpha(i)$.
Thus the pointwise order on $\Delta([m+n],[m])$ induces a partial order $\le$ on $\shuffle(m,n)$.
We have drawn in \cref{Hasse} two copies of $\shuffle(2,2)$, where each vertex $\langle \alpha, \alpha' \rangle$ is labelled with $\alpha$ (left) or the corresponding grid-path (right) which we describe now.
\begin{figure}
\[
\begin{tikzpicture}
\draw[gray, thin] (0,0)--(0,1)--(-1,2)--(0,3)--(1,2)--(0,1)(0,3)--(0,4);
\node[fill = white, scale = 0.7] at (0,0) {$\{0,0,0,1,2\}$};
\node[fill = white, scale = 0.7] at (0,1) {$\{0,0,1,1,2\}$};
\node[fill = white, scale = 0.7] at (-1,2) {$\{0,0,1,2,2\}$};
\node[fill = white, scale = 0.7] at (1,2) {$\{0,1,1,1,2\}$};
\node[fill = white, scale = 0.7] at (0,3) {$\{0,1,1,2,2\}$};
\node[fill = white, scale = 0.7] at (0,4) {$\{0,1,2,2,2\}$};
\end{tikzpicture}\hspace{30pt}
\begin{tikzpicture}
\draw[thick]
(-0.2,-0.2)--(-0.2,0.2)--(0.2,0.2)
(-0.2,0.8)--(-0.2,1)--(0,1)--(0,1.2)--(0.2,1.2)
(-1.2,1.8)--(-1.2,2)--(-0.8,2)--(-0.8,2.2)
(0.8,1.8)--(1,1.8)--(1,2.2)--(1.2,2.2)
(-0.2,2.8)--(0,2.8)--(0,3)--(0.2,3)--(0.2,3.2)
(-0.2,3.8)--(0.2,3.8)--(0.2,4.2);
\draw[gray, thin]
(0,0.35)--(0,0.7)
(-0.3,1.3)--(-0.7,1.7)
(0.3,1.3)--(0.7,1.7)
(-0.7,2.3)--(-0.3,2.7)
(0.7,2.3)--(0.3,2.7)
(0,3.3)--(0,3.65);
\end{tikzpicture}
\]
\caption{$\shuffle(2,2)$}\label{Hasse}
\end{figure}

We can visualise $(m,n)$-shuffles as paths on the $m \times n$ grid from the lower-left corner to the upper-right corner.
For example, the path in \cref{example shuffle} corresponds to the $(3,2)$-shuffle $\langle \{0,0,1,2,2,3\}, \{0,1,1,1,2,2\} \rangle$.
(If either $m=0$ or $n=0$ then the ``grid'' becomes a line segment.
In this case we have a unique path connecting the two endpoints, which corresponds to having a unique $(m,n)$-shuffle.)
This motivates the following notation.

\begin{figure}
	\[
	\begin{tikzpicture}
	\draw[gray!50!white, thin] (0,0) -- (3,0) (0,1) -- (3,1) (0,2) -- (3,2) (0,0) -- (0,2) (1,0) -- (1,2) (2,0) -- (2,2) (3,0) -- (3,2);
	\draw[thick] (0,0) -- (0,1) -- (2,1) -- (2,2) -- (3,2);
	\node[fill = white, scale = 0.7] at (0,0) {0};
	\node[fill = white, scale = 0.7] at (0,1) {1};
	\node[fill = white, scale = 0.7] at (1,1) {2};
	\node[fill = white, scale = 0.7] at (2,1) {3};
	\node[fill = white, scale = 0.7] at (2,2) {4};
	\node[fill = white, scale = 0.7] at (3,2) {5};
	\end{tikzpicture}
	\]
	\caption{$\langle \{0,0,1,2,2,3\}, \{0,1,1,1,2,2\} \rangle$}\label{example shuffle}
\end{figure}
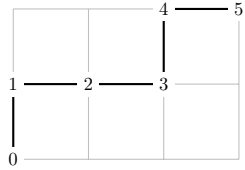

\begin{definition}\label{corners}
Given an $(m,n)$-shuffle $\langle \alpha, \alpha' \rangle$, we will write:
\begin{itemize}
	\item $\lrcorner\langle\alpha,\alpha'\rangle$ for the set of all $1 \le i \le m+n-1$ such that
	\[
	\alpha(i+1) = \alpha(i) = \alpha(i-1)+1
	\]
	(or equivalently $\alpha'(i+1) = \alpha'(i)+1 = \alpha'(i-1)+1$) holds; and
	\item $\ulcorner\langle\alpha,\alpha'\rangle$ for the set of all $1 \le i \le m+n-1$ such that 
	\[
	\alpha(i+1)=\alpha(i)+1=\alpha(i-1)+1
	\]
	(or equivalently $\alpha'(i+1) = \alpha'(i) = \alpha'(i-1)+1$) holds.
\end{itemize}
\end{definition}
For example, if $\langle\alpha,\alpha'\rangle$ is the $(3,2)$-shuffle depicted in \cref{example shuffle}, then $\lrcorner\langle\alpha,\alpha'\rangle = \{3\}$ and $\ulcorner\langle\alpha,\alpha'\rangle = \{1,4\}$.
The following propositions are straightforward to prove.
\begin{proposition}\label{dominating shuffle}
	Let $\langle\alpha,\alpha'\rangle, \langle\beta,\beta'\rangle$ be $(m,n)$-shuffles.
	Suppose $\alpha(i) = \beta(i)$ (and so $\alpha'(i) = \beta'(i)$) for each $i \in \lrcorner\langle\alpha,\alpha'\rangle$.
	Then $\langle\alpha,\alpha'\rangle \le \langle\beta,\beta'\rangle$.
\end{proposition}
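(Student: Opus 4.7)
My plan is to argue by contradiction, after unpacking the partial order: $\langle\alpha,\alpha'\rangle \le \langle\beta,\beta'\rangle$ means precisely that $\alpha(i) \le \beta(i)$ for every $i \in [m+n]$. So suppose instead that $\alpha(i_0) > \beta(i_0)$ for some index $i_0$. Since $\alpha$ and $\beta$ are non-decreasing surjections onto $[m]$ with $\alpha(0) = 0 = \beta(0)$, successive values differ by $0$ or $1$; let $k$ be the smallest index with $\alpha(k) > \beta(k)$. Then $k \ge 1$, and comparing one-step changes at $k$ forces $\alpha(k-1) = \beta(k-1)$, $\alpha(k) = \alpha(k-1) + 1$, and $\beta(k) = \beta(k-1)$, so the difference $\alpha(k) - \beta(k)$ equals $1$ (in grid-path language, at step $k$ the path of $\alpha$ goes right while that of $\beta$ goes up).

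Since this difference equals $0$ again at $m+n$ and changes by at most $1$ per step, it must strictly decrease at some step in $[k+1, m+n]$; such a decrease requires $\alpha$ to stay constant there. Let $\ell+1 \in [k+1, m+n]$ be the smallest index at which $\alpha$ stays constant. By minimality, $\alpha$ increases at each of the steps $k, k+1, \dots, \ell$, so $\alpha(\ell+1) = \alpha(\ell) = \alpha(\ell-1) + 1$; hence $\ell \in \lrcorner\langle\alpha,\alpha'\rangle$, the bounds $1 \le \ell \le m+n-1$ of \cref{corners} being clear from $\ell \ge k \ge 1$ and $\ell + 1 \le m+n$. The hypothesis therefore gives $\alpha(\ell) = \beta(\ell)$.

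To derive the contradiction I would observe that on the interval $[k, \ell]$ the difference $\alpha(s) - \beta(s)$ is non-decreasing in $s$: at each step $s+1$ in this range, $\alpha$ increases by $1$ (by choice of $\ell$) while $\beta$ increases by $0$ or $1$. Hence $\alpha(\ell) - \beta(\ell) \ge \alpha(k) - \beta(k) = 1$, contradicting $\alpha(\ell) = \beta(\ell)$. The argument is essentially combinatorial bookkeeping; the only real subtlety is verifying that the corner index $\ell$ exists and lies in the allowed range $[1, m+n-1]$, but both are forced by the fact that $\alpha - \beta$ must descend from $+1$ back down to $0$ strictly before step $m+n+1$.
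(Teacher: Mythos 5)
Your argument is correct: the minimal index $k$ where $\alpha$ first exceeds $\beta$ forces a rightward step of $\alpha$ against a stationary step of $\beta$, the first subsequent stationary step of $\alpha$ produces an element $\ell$ of $\lrcorner\langle\alpha,\alpha'\rangle$ in the required range, and the monotonicity of $\alpha-\beta$ on $[k,\ell]$ contradicts the hypothesis at $\ell$. The paper states this proposition without proof (it is declared "straightforward to prove"), so there is nothing to compare against; your write-up is a complete and valid verification, including the edge cases where $\lrcorner\langle\alpha,\alpha'\rangle=\varnothing$ or $\ell=k$.
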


\begin{proposition}\label{immediate predecessor of shuffle}
	Let $\langle\alpha,\alpha'\rangle$ be an $(m,n)$-shuffle and suppose $i \in \lrcorner\langle\alpha,\alpha'\rangle$.
	Then $\langle\alpha,\alpha'\rangle$ has an immediate predecessor $\langle\beta,\beta'\rangle$ such that $\langle\alpha,\alpha'\rangle \circ \delta^i = \langle \beta,\beta'\rangle \circ \delta^i$.
	Moreover, this condition determines $\langle\beta,\beta'\rangle$ uniquely and induces a bijection between $\lrcorner\langle\alpha,\alpha'\rangle$ and the set of immediate predecessors of $\langle\alpha,\alpha'\rangle$.
	Similarly, there is a bijection between $\ulcorner\langle\alpha,\alpha'\rangle$ and the set of immediate successors of $\langle\alpha,\alpha'\rangle$.
\end{proposition}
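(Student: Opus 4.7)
The plan is to construct the bijection both ways explicitly, with the $\ulcorner$/successor statement following by a dual argument. Given $i \in \lrcorner\langle\alpha,\alpha'\rangle$, I would define the candidate $\langle\beta,\beta'\rangle$ by $\beta(i) := \alpha(i)-1$ and $\beta(j) := \alpha(j)$ for $j \neq i$ (with $\beta'$ then forced by $\beta'(j) = j - \beta(j)$). The two halves of the corner condition $\alpha(i-1)+1 = \alpha(i) = \alpha(i+1)$ are each used exactly once in verifying that $\beta$ is still an $(m,n)$-shuffle: the equality $\alpha(i-1) = \alpha(i)-1$ is what makes $\beta(i-1) \le \beta(i)$, keeping $\beta$ order-preserving, while $\alpha(i+1) = \alpha(i)$ keeps the value $\alpha(i)$ in the image of $\beta$ (attained at $i+1$), so $\beta$ remains surjective. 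The face condition $\langle\alpha,\alpha'\rangle\circ\delta^i = \langle\beta,\beta'\rangle\circ\delta^i$ is then immediate since $\delta^i$ skips precisely the index where $\alpha$ and $\beta$ disagree. Because $\beta$ and $\alpha$ agree off $i$, any shuffle $\langle\gamma,\gamma'\rangle$ with $\beta \le \gamma \le \alpha$ must agree with $\alpha$ off $i$ and have $\gamma(i) \in \{\alpha(i)-1,\alpha(i)\}$, so $\gamma \in \{\beta,\alpha\}$; hence $\langle\beta,\beta'\rangle$ is an immediate predecessor. Uniqueness under the face condition follows by the same local analysis: any immediate predecessor satisfying it must agree with $\alpha$ off $i$, and order-preservation together with $\beta < \alpha$ then pins $\beta(i)$ to $\alpha(i)-1$.

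For the converse --- that every immediate predecessor $\langle\gamma,\gamma'\rangle$ of $\langle\alpha,\alpha'\rangle$ arises this way --- I would let $S := \{j : \gamma(j) < \alpha(j)\}$ and take $i := \max S$. Using that both $\gamma$ and $\alpha$ have $\{0,1\}$-valued increments (as shuffles), the equality $\gamma(i+1) = \alpha(i+1)$ combined with $\gamma(i) < \alpha(i) \le \alpha(i+1)$ and $\gamma(i+1) \le \gamma(i)+1$ forces $\gamma(i) = \alpha(i)-1$ and $\alpha(i) = \alpha(i+1)$. Moreover, if one had $\alpha(i-1) = \alpha(i)$, then $\gamma(i-1) \le \gamma(i) = \alpha(i)-1 < \alpha(i-1)$ would put $i-1$ in $S$, contradicting the maximality of $i$; hence $\alpha(i-1) = \alpha(i)-1$, and so $i \in \lrcorner\langle\alpha,\alpha'\rangle$. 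The $\beta$ associated with this $i$ as above then satisfies $\gamma \le \beta < \alpha$ (both $\gamma$ and $\beta$ agree with $\alpha$ off $i$ and take the value $\alpha(i)-1$ at $i$), so immediateness of $\gamma$ forces $\gamma = \beta$. Injectivity of $i \mapsto \beta$ is clear since $\beta$ differs from $\alpha$ precisely at position $i$, giving the bijection.

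The analogous bijection between $\ulcorner\langle\alpha,\alpha'\rangle$ and the immediate successors is obtained by the dual construction $\beta(i) := \alpha(i)+1$, with the two halves of the $\ulcorner$-condition $\alpha(i-1) = \alpha(i) = \alpha(i+1)-1$ playing the roles dual to before, or equivalently by applying the predecessor statement to $\langle\alpha',\alpha\rangle$ viewed as an $(n,m)$-shuffle (noting that $\ulcorner\langle\alpha,\alpha'\rangle = \lrcorner\langle\alpha',\alpha\rangle$ and that passing to $\langle\alpha',\alpha\rangle$ reverses the order). I expect the main obstacle to be the converse direction above: the naive guess $i := \min S$ does not isolate a $\lrcorner$-corner of $\alpha$ (one only obtains control of $\alpha(i-1)$, not $\alpha(i+1)$), whereas $i := \max S$ works because the requirement that $\gamma$'s path rejoin $\alpha$'s immediately after $i$ is what extracts the local corner structure.
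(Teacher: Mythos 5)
Your construction of $\langle\beta,\beta'\rangle$ from $i\in\lrcorner\langle\alpha,\alpha'\rangle$, the verification that it is an immediate predecessor agreeing with $\langle\alpha,\alpha'\rangle$ on $\delta^i$, and the uniqueness argument are all correct. The gap is in the converse. Having set $S=\{j:\gamma(j)<\alpha(j)\}$ and $i=\max S$, you claim that $\alpha(i-1)=\alpha(i)$ would force $i-1\in S$, ``contradicting the maximality of $i$.'' There is no contradiction: $i-1<i$, so $i-1\in S$ is perfectly compatible with $i=\max S$. (It would contradict \emph{minimality} if you had taken $i=\min S$ --- but, as you yourself observe in the last paragraph, $\min S$ gives control of $\alpha(i-1)$ and not of $\alpha(i+1)$; symmetrically, $\max S$ gives control of $\alpha(i+1)$ and not of $\alpha(i-1)$. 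Neither endpoint alone yields both halves of the corner condition.) Since your subsequent step ``$\gamma\le\beta<\alpha$, hence $\gamma=\beta$'' presupposes that $\beta$ is well defined, i.e.\ that $i\in\lrcorner\langle\alpha,\alpha'\rangle$, the surjectivity of $i\mapsto\langle\beta,\beta'\rangle$ onto the immediate predecessors is not established.

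The missing ingredient is a reason why an immediate predecessor cannot differ from $\alpha$ at more than one index (equivalently, why the configuration $\alpha(i-1)=\alpha(i)$ at $i=\max S$ cannot occur for an immediate predecessor). The standard way to supply it is to show the poset $\shuffle(m,n)$ is graded by $\mathrm{rk}(\alpha)=\sum_j\alpha(j)$: given any $\gamma<\alpha$, one can insert $\eta$ with $\gamma<\eta\le\alpha$ and $\mathrm{rk}(\eta)=\mathrm{rk}(\gamma)+1$. Concretely, let $j_0=\min S$ (so $\gamma(j_0)=\gamma(j_0-1)=\alpha(j_0)-1$) and let $i_0\ge j_0$ be least with $\gamma(i_0+1)=\gamma(i_0)+1$ (such $i_0$ exists since $\gamma(m+n)=m>\gamma(j_0)$); then $i_0\in\ulcorner\langle\gamma,\gamma'\rangle$, and $\eta:=\gamma+e_{i_0}$ is a shuffle with $\eta(i_0)=\gamma(j_0)+1=\alpha(j_0)\le\alpha(i_0)$, so $\gamma<\eta\le\alpha$. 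Gradedness then forces any immediate predecessor $\gamma$ to satisfy $|S|=1$ and $\alpha(i)-\gamma(i)=1$, after which $\gamma$ being order-preserving with increments in $\{0,1\}$ yields $\alpha(i-1)=\alpha(i)-1$ and $\alpha(i+1)=\alpha(i)$, i.e.\ $i\in\lrcorner\langle\alpha,\alpha'\rangle$ and $\gamma=\beta_i$. With this repair the rest of your argument, including the dual statement for $\ulcorner$, goes through.
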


For $1 \le i \le m+n-1$, the grid-path corresponding to $\langle\alpha,\alpha'\rangle \in \shuffle(m,n)$ locally looks like:
\[
\begin{tikzpicture}[baseline = 8, scale = 0.7]
\draw (0,0) -- (1,0) -- (1,1);
\node[scale = 0.7, fill = white] at (1,0) {$i$};
\end{tikzpicture}\hspace{5pt}, \hspace{10pt}
\begin{tikzpicture}[baseline = 8, scale = 0.7]
\draw (0,0) -- (0,1) -- (1,1);
\node[scale = 0.7, fill = white] at (0,1) {$i$};
\end{tikzpicture}\hspace{5pt}, \hspace{10pt}
\begin{tikzpicture}[baseline = -3, scale = 0.7]
\draw (0,0) -- (1.4,0);
\node[scale = 0.7, fill = white] at (0.7,0) {$i$};
\end{tikzpicture} \hspace{10pt} \text {or} \hspace{10pt}
\begin{tikzpicture}[baseline = 12, scale = 0.7]
\draw (0,0) -- (0,1.4);
\node[scale = 0.7, fill = white] at (0,0.7) {$i$};
\end{tikzpicture}
\]
This observation can be formalised as follows.

\begin{proposition}\label{points}
	Let $\langle\alpha,\alpha'\rangle$ be an $(m,n)$-shuffle.
	Then for any $1 \le i \le m+n-1$, precisely one of the following holds:
	\begin{itemize}
		\item $i \in \lrcorner\langle\alpha,\alpha'\rangle$;
		\item $i \in \ulcorner\langle\alpha,\alpha'\rangle$;
		\item $\alpha^{-1}(\alpha(i)) = \{i\}$; or
		\item $(\alpha')^{-1}(\alpha'(i)) = \{i\}$.
	\end{itemize}
\end{proposition}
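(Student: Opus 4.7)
The plan is to convert the statement into a short case analysis on the two ``steps'' of the grid path at position $i$, using the defining identity $\alpha(j)+\alpha'(j)=j$.

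First I would record the elementary observation that subtracting this identity at $j$ and $j+1$ gives
\[
(\alpha(j+1)-\alpha(j))+(\alpha'(j+1)-\alpha'(j))=1,
\]
and since both summands are nonnegative integers by monotonicity, exactly one is $1$ and the other is $0$. In the grid picture this says that at every step the path either goes ``right'' (R), incrementing $\alpha$ only, or ``up'' (U), incrementing $\alpha'$ only.

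Next I would apply this dichotomy to the two steps $i-1\to i$ and $i\to i+1$, yielding four cases (RR, UU, RU, UR), and match each to exactly one bullet. In case RR we have $\alpha(i-1)<\alpha(i)<\alpha(i+1)$, so by monotonicity of $\alpha$ we get $\alpha^{-1}(\alpha(i))=\{i\}$ (third bullet); case UU is symmetric and produces $(\alpha')^{-1}(\alpha'(i))=\{i\}$ (fourth bullet); case RU gives $\alpha(i-1)<\alpha(i)=\alpha(i+1)$ with $\alpha(i)=\alpha(i-1)+1$, which is exactly the defining condition of $i\in\lrcorner\langle\alpha,\alpha'\rangle$ (first bullet); case UR is exactly $i\in\ulcorner\langle\alpha,\alpha'\rangle$ (second bullet).

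For mutual exclusivity I would just reread each case: e.g.\ in RU the equality $\alpha(i+1)=\alpha(i)$ blocks $\alpha^{-1}(\alpha(i))=\{i\}$ and blocks $\ulcorner$ (which demands $\alpha(i+1)=\alpha(i)+1$), while $\alpha(i-1)=\alpha'(i)$... — in short, each of the four cases specifies the triple $(\alpha(i-1),\alpha(i),\alpha(i+1))$ up to an additive constant, and the four profiles are pairwise distinct, ruling out the other bullets simultaneously. I do not anticipate a genuine obstacle here; the proof is a mechanical check once the R/U dichotomy is in hand, and the only mild care is in verifying the exclusivity half (which follows from the same dichotomy applied in the contrapositive direction).
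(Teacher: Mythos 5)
Your proof is correct and takes essentially the same route as the paper, which gives no formal argument for this proposition beyond the observation that the grid path locally has one of four shapes at $i$ --- exactly your R/U dichotomy applied to the two steps $i-1\to i$ and $i\to i+1$, with the same matching of the four profiles to the four bullets.
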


\subsection{The category $\cell$}\label{Theta_2}
The category $\Delta$ can be seen as the full subcategory of $\Cat$ spanned by the free categories $[n]$ generated by linear graphs:
\[
\begin{tikzcd}
0 \arrow [r] & 1 \arrow [r] & \dots \arrow [r] & n
\end{tikzcd}
\]
Similarly, Joyal's \emph{2-cell category} $\cell$ is the full subcategory of $\twoCat$ spanned by the free 2-categories $[n;q_1,\dots,q_n]$ generated by ``linear-graph-enriched linear graphs'':
\[
\begin{tikzpicture}[scale = 2]
\node at (0,0) {0};
\node at (1,0) {1};
\node at (2.2,0) {$\dots$};
\node at (3.4,0) {$n$};
\draw[->] (0.1,0.15) .. controls (0.4,0.6) and (0.6,0.6) .. (0.9,0.15);
\draw[->] (0.1,0.1) .. controls (0.4,0.3) and (0.6,0.3) .. (0.9,0.1);
\draw[->] (0.1,-0.15) .. controls (0.4,-0.6) and (0.6,-0.6) .. (0.9,-0.15);
\draw[->, double] (0.5,0.45) -- (0.5,0.3);
\draw[->, double] (0.5,-0.3) -- (0.5,-0.45);
\draw[->, double] (0.5,0.2) -- (0.5,0.05);
\node at (0.5,-0.1) {$\vdots$};
\node[scale = 0.7] at (0.3,0.48) {0};
\node[scale = 0.7] at (0.3,0.27) {1};
\node[scale = 0.7] at (0.3,-0.5) {$q_1$};
\draw[->] (1.1,0.15) .. controls (1.4,0.6) and (1.6,0.6) .. (1.9,0.15);
\draw[->] (1.1,0.1) .. controls (1.4,0.3) and (1.6,0.3) .. (1.9,0.1);
\draw[->] (1.1,-0.15) .. controls (1.4,-0.6) and (1.6,-0.6) .. (1.9,-0.15);
\draw[->, double] (1.5,0.45) -- (1.5,0.3);
\draw[->, double] (1.5,-0.3) -- (1.5,-0.45);
\draw[->, double] (1.5,0.2) -- (1.5,0.05);
\node at (1.5,-0.1) {$\vdots$};
\node[scale = 0.7] at (1.3,0.48) {0};
\node[scale = 0.7] at (1.3,0.27) {1};
\node[scale = 0.7] at (1.3,-0.5) {$q_2$};
\draw[->] (2.5,0.15) .. controls (2.8,0.6) and (3,0.6) .. (3.3,0.15);
\draw[->] (2.5,0.1) .. controls (2.8,0.3) and (3,0.3) .. (3.3,0.1);
\draw[->] (2.5,-0.15) .. controls (2.8,-0.6) and (3,-0.6) .. (3.3,-0.15);
\draw[->, double] (2.9,0.45) -- (2.9,0.3);
\draw[->, double] (2.9,-0.3) -- (2.9,-0.45);
\draw[->, double] (2.9,0.2) -- (2.9,0.05);
\node at (2.9,-0.1) {$\vdots$};
\node[scale = 0.7] at (2.7,0.48) {0};
\node[scale = 0.7] at (2.7,0.27) {1};
\node[scale = 0.7] at (2.7,-0.5) {$q_n$};
\end{tikzpicture}
\]
whose hom-categories are given by
\[
\hom(k,\ell) = \left\{\begin{array}{cl}
[q_{k+1}] \times \cdots \times [q_\ell] & \text {if $k \le \ell$,}\\
\varnothing & \text {if $k > \ell$}.
\end{array}\right.
\]

More precisely, $\cell$ has objects $\nq = [n; q_1, \dots, q_n]$ where $n, q_k \in \NN$ for each $k$.
A morphism $[\alpha; \aalpha] = [\alpha; \alpha_{\alpha(0)+1}, \dots, \alpha_{\alpha(m)}]: \mp \to \nq$ consists of simplicial operators $\alpha : [m] \to [n]$ and $\alpha_k : [p_\ell] \to [q_k]$ for each $k \in [n]$ such that there exists (necessarily unique) $\ell \in [m]$ with $\alpha(\ell-1)<k\le\alpha(\ell)$.
By a \emph{cellular operator} we mean a morphism in $\cell$.
Clearly $[0]$ is a terminal object in $\cell$, and we will write $! : \nq \to [0]$ for any cellular operator into $[0]$.

\begin{remark}
	Here we are describing $\cell = \Delta \wr \Delta$ as an instance of Berger's \emph{wreath product} construction.
	For any given category $\C$, the wreath product $\Delta \wr \C$ may be thought of as the category of free $\C$-enriched (or more accurately $\widehat{\C}$-enriched) categories generated by linear $\C$-enriched graphs.
	The precise definition can be found in \cite[Definition 3.1]{Berger:wreath}.
\end{remark}

\begin{remark}
	The notation for objects (and maps) in $\cell$ varies from author to author.
	(This is partly because some authors introduce a notation for objects in a general wreath product category which can be specialised to $\cell = \Delta\wr\Delta$ while others are interested in the particular category $\cell$ and hence able to adopt a more economical notation.)
	For example, the object we denote by $\nq = [n;q_1,\dots,q_n]$ would be denoted as:
	\begin{itemize}
		\item $([q_1],\dots,[q_n])$ in \cite{Berger:wreath};
		\item $\nq = ([n],[-1],[q_1],\dots,[q_n],[-1])$ in \cite{Oury}; % p.87
		\item $([n],[q_1],\dots,[q_n])$ in \cite{Rezk:cartesian}; and
		\item $\langle q_1, \dots, q_n \rangle$ in \cite{Watson}.
	\end{itemize}
	In \cite{Ara:nqcat} an object in $\cell$ (or more generally in $\Theta_n$) is specified using the \emph{table of dimensions}; see \emph{loc.~cit.}~for details.
\end{remark}
The category $\Delta$ has an automorphism $(-)\op$ which is the identity on objects and sends $\alpha : [m] \to [n]$ to $\alpha\op : [m] \to [n]$ given by $\alpha\op(i) = n-\alpha(m-i)$.
This induces two automorphisms on $\cell$, namely:
\begin{itemize}
	\item $(-)\co : \cell \to \cell$, which sends $[\alpha;\aalpha] : \mp \to \nq$ to
	\[
	[\alpha;\alpha\op_{\alpha(0)+1},\dots,\alpha\op_{\alpha(m)}] : \mp \to \nq;
	\]
	and
	\item $(-)\op : \cell \to \cell$, which sends $[\alpha;\aalpha] : \mp \to \nq$ to
	\[
	[\alpha\op;\alpha_{\alpha(m)},\dots,\alpha_{\alpha(0)+1}] : [m;p_m,\dots,p_1] \to [n;q_n,\dots,q_1].
	\]
\end{itemize}

\subsection{Face maps in $\cell$}
There is a Reedy category structure on $\cell$ defined as follows; see \cite[Proposition 2.11]{Bergner;Rezk:Reedy} or \cite[Lemma 2.4]{Berger:nerve} for a proof.
\begin{definition}
	The \emph{dimension} of $\nq$ is $\dim\nq \defeq n + \sum_{k=1}^n q_k$.
	A cellular operator $[\alpha;\aalpha] : \mp \to \nq$ is a \emph{face operator} if $\alpha$ is monic and $\{\alpha_k:\alpha(\ell-1) < k \le \alpha(\ell)\}$ is jointly monic for each $1 \le \ell \le m$.
	It is a \emph{degeneracy operator} if $\alpha$ and all $\alpha_k$ are surjective.
\end{definition}
\begin{definition}
	A simplicial operator $\alpha : [m] \to [n]$ is \emph{inert} if it is a subinterval inclusion, that is, if $\alpha(i+1) = \alpha(i)+1$ for $0 \le i \le m-1$.
\end{definition}
\begin{definition}
We say a face map $[\alpha;\aalpha] : \mp \to \nq$ is:
\begin{itemize}
	\item \emph{inner} if $\alpha$ and all $\alpha_k$ preserve the top and bottom elements, and otherwise \emph{outer};
	\item \emph{horizontal} if each $\alpha_k$ is surjective;
	\item \emph{vertical} if $\alpha = \id$; and
	\item \emph{inert} if $\alpha$ and all $\alpha_k$ are inert.
\end{itemize}
(Examples of each kind can be found in \cref{faces}.)
A horizontal face map of the form $[\delta^k;\aalpha]$ will be called a \emph{$k$-th horizontal face}.
\end{definition}

By the \emph{codimension} of a face map $[\alpha;\aalpha] : \mp \to \nq$, we mean the difference $\dim\nq - \dim\mp$.
We will in particular be interested in the face maps of codimension 1, which we call \emph{hyperfaces}.
Such a map $[\alpha;\aalpha]$ has precisely one of the following forms:
\begin{itemize}
	\item for $n \ge 1$, $\nq$ always has a unique \emph{0-th horizontal face}
	\[
	\delta_h^0 \defeq [\delta^0;\iid] : [n-1;q_2,\dots,q_n] \to \nq
	\]
	which has codimension 1 if and only if $q_1 = 0$;
	\item similarly, if $q_n = 0$ then the unique \emph{$n$-th horizontal face}
	\[
	\delta_h^n \defeq [\delta^n;\iid] : [n-1;q_1,\dots,q_{n-1}] \to \nq
	\]
	has codimension 1;
	\item for each $1 \le k \le n-1$, there is a family of \emph{$k$-th horizontal hyperfaces}
	\[
	\delta_h^{k;\langle\beta,\beta'\rangle} \defeq [\delta^k;\aalpha] : [n-1;q_1,\dots,q_{k-1},q_k+q_{k+1},q_{k+2},\dots,q_n] \to \nq
	\]
	indexed by $\langle \beta,\beta' \rangle \in \shuffle(q_k,q_{k+1})$ where $\alpha_\ell = \id$ for $k \neq \ell \neq k+1$, $\alpha_k = \beta$ and $\alpha_{k+1} = \beta'$; and
	\item for each $1 \le k \le n$ satisfying $q_k \ge 1$ and for each $0 \le i \le q_k$, the \emph{$(k;i)$-th vertical hyperface}
	\[
	\delta_v^{k;i} \defeq [\id;\aalpha] : [n;q_1,\dots,q_{k-1},q_k-1,q_{k+1},\dots,q_n] \to \nq
	\]
	is given by $\alpha_k = \delta^i$ and $\alpha_\ell = \id$ for $\ell \neq k$.
\end{itemize}

\begin{convention}
	Strictly speaking, we are giving the same name to different cellular operators, and this can lead to confusion.
	So in the rest of this paper, we will assume the codomain of any cellular operator denoted by $\delta$ (with some decoration) is always \emph{whatever is called $\nq$ at that point} (or some cellular subset of $\cell\nq$ as described in \cref{cellular sets}).
	When this is not the case, we will indicate the codomain $\mp$ either by writing $\delta\mp$ instead of $\delta$, or by drawing $\delta$ as an arrow $\begin{tikzcd} {[m';\mathbf{p'}]} \arrow [r, "\delta"] & {\mp}.\end{tikzcd}$
\end{convention}

In \cref{faces}, we have listed various faces of $[2;0,2]$.
We will briefly describe how to read the pictures.
In the first row is the ``standard picture'' of $[2;0,2]$, in which we have nicely placed its objects ($\begin{tikzpicture}[baseline = -3]\filldraw (0,0) circle [radius = 1pt]; \end{tikzpicture}$), generating 1-cells ($\begin{tikzpicture}[baseline = -3]\draw[->] (0,0)--(0.5,0); \end{tikzpicture}$) and generating 2-cells ($\begin{tikzpicture}[baseline = -3]\draw[->, double] (0,0)--(0.5,0); \end{tikzpicture}$).
In the rest of the table, a face operator $[\alpha;\aalpha] : \mp \to [2;0,2]$ is illustrated as the standard picture of $\mp$ appropriately distorted so that the $\ell$-th object appears in the $\alpha(\ell)$-th position and each generating 1-cell lies roughly where the factors of its image used to.
In the third row (where $\alpha_1$ is not injective), we have left small gaps between the generating 1-cells so that they do not intersect with each other.
\begin{table}
\begin{tabular}{|c||c|c|c|c|c|c|}\hline
	& picture & domain & inner/outer & horizontal & vertical & inert \\\hhline{|=#=|=|=|=|=|=|}
	$\id$ &
	$\begin{tikzpicture}[baseline = -3]
	\filldraw
	(0,0) circle [radius = 1pt]
	(1,0) circle [radius = 1pt]
	(2,0) circle [radius = 1pt];
	\draw[->] (0.1,0)--(0.9,0);
	\draw[->] (1.1,0.15) .. controls (1.4,0.6) and (1.6,0.6) .. (1.9,0.15);
	\draw[->] (1.1,-0.15) .. controls (1.4,-0.6) and (1.6,-0.6) .. (1.9,-0.15);
	\draw[->] (1.1,0)--(1.9,0);
	\draw[->, double] (1.5,-0.05)--(1.5,-0.4);
	\draw[->, double] (1.5,0.4)--(1.5,0.05);
	\end{tikzpicture}$
	& $[2;0,2]$ & inner & \checkmark & \checkmark & \checkmark \\\hline
	
	$\delta_h^0 = [\delta^0;\id]$ &
	$\begin{tikzpicture}[baseline = -3]
	\draw[gray!50!white, fill = gray!50!white]
	(0,0) circle [radius = 1pt];
	\draw[->,gray!50!white] (0.1,0)--(0.9,0);
	\filldraw
	(1,0) circle [radius = 1pt]
	(2,0) circle [radius = 1pt];
	\draw[->] (1.1,0.15) .. controls (1.4,0.6) and (1.6,0.6) .. (1.9,0.15);
	\draw[->] (1.1,-0.15) .. controls (1.4,-0.6) and (1.6,-0.6) .. (1.9,-0.15);
	\draw[->] (1.1,0)--(1.9,0);
	\draw[->, double] (1.5,0.4)--(1.5,0.05);
	\draw[->, double] (1.5,-0.05)--(1.5,-0.4);
	\end{tikzpicture}$
	& $[1;2]$ & outer & \checkmark & $\times$ & \checkmark \\\hline
	
	$\delta_h^{1;\langle!,\id\rangle} = [\delta^1;!,\id]$ &
	$\begin{tikzpicture}[baseline = -3]
	\draw[white] (0,0) circle [radius = 1pt];
	\filldraw
	(0,0) circle [radius = 1pt]
	(2,0) circle [radius = 1pt];
	\draw[->, yshift = 2pt] (0.1,0) -- (1,0) .. controls (1.4,0.6) and (1.6,0.6) .. (1.9,0.15);
	\draw[->, yshift = -2pt] (0.1,0) -- (1,0) .. controls (1.4,-0.6) and (1.6,-0.6) .. (1.9,-0.15);
	\draw[->] (0.1,0)--(1.9,0);
	\draw[->, double] (1.5,0.45)--(1.5,0.05);
	\draw[->, double] (1.5,-0.05)--(1.5,-0.45);
	\end{tikzpicture}$
	& $[1;2]$ & inner & \checkmark & $\times$ & $\times$ \\\hline
	 
	$\delta_v^{2;0} = [\id;\id,\delta^0]$ &
	$\begin{tikzpicture}[baseline = -3]
	\filldraw
	(0,0) circle [radius = 1pt]
	(1,0) circle [radius = 1pt]
	(2,0) circle [radius = 1pt];
	\draw[->] (0.1,0)--(0.9,0);
	\draw[->, gray!50!white] (1.1,0.15) .. controls (1.4,0.6) and (1.6,0.6) .. (1.9,0.15);
	\draw[->] (1.1,-0.15) .. controls (1.4,-0.6) and (1.6,-0.6) .. (1.9,-0.15);
	\draw[->] (1.1,0)--(1.9,0);
	\draw[->, double] (1.5,-0.05)--(1.5,-0.4);
	\draw[->, double, gray!50!white] (1.5,0.4)--(1.5,0.05);
	\end{tikzpicture}$
	& \multirow{2}{*}[-12pt]{$[2;0,1]$} & \multirow{2}{*}[-12pt]{outer} & \multirow{2}{*}[-12pt]{$\times$} & \multirow{2}{*}[-12pt]{\checkmark} & \multirow{2}{*}[-12pt]{\checkmark} \\
	
	$\delta_v^{2;2} = [\id;\id,\delta^2]$ &
	$\begin{tikzpicture}[baseline = -3]
	\filldraw
	(0,0) circle [radius = 1pt]
	(1,0) circle [radius = 1pt]
	(2,0) circle [radius = 1pt];
	\draw[->] (0.1,0)--(0.9,0);
	\draw[->] (1.1,0.15) .. controls (1.4,0.6) and (1.6,0.6) .. (1.9,0.15);
	\draw[->, gray!50!white] (1.1,-0.15) .. controls (1.4,-0.6) and (1.6,-0.6) .. (1.9,-0.15);
	\draw[->] (1.1,0)--(1.9,0);
	\draw[->, double, gray!50!white] (1.5,-0.05)--(1.5,-0.4);
	\draw[->, double] (1.5,0.4)--(1.5,0.05);
	\end{tikzpicture}$
	& & & & & \\\hline
	
	$\delta_v^{2;1} = [\id;\id,\delta^1]$ &
	$\begin{tikzpicture}[baseline = -3]
	\filldraw
	(0,0) circle [radius = 1pt]
	(1,0) circle [radius = 1pt]
	(2,0) circle [radius = 1pt];
	\draw[->] (0.1,0)--(0.9,0);
	\draw[->] (1.1,0.15) .. controls (1.4,0.6) and (1.6,0.6) .. (1.9,0.15);
	\draw[->] (1.1,-0.15) .. controls (1.4,-0.6) and (1.6,-0.6) .. (1.9,-0.15);
	\draw[->, double] (1.5,0.4)--(1.5,-0.4);
	\end{tikzpicture}$
	& $[2;0,1]$ & inner & $\times$ & \checkmark & $\times$ \\\hline

	$\delta_h^2 = [\delta^2;\id]$ &
	$\begin{tikzpicture}[baseline = -3]
	\filldraw
	(0,0) circle [radius = 1pt]
	(1,0) circle [radius = 1pt];
	\filldraw[gray!50!white]
	(2,0) circle [radius = 1pt];
	\draw[->] (0.1,0)--(0.9,0);
	\draw[->, gray!50!white] (1.1,0.15) .. controls (1.4,0.6) and (1.6,0.6) .. (1.9,0.15);
	\draw[->, gray!50!white] (1.1,-0.15) .. controls (1.4,-0.6) and (1.6,-0.6) .. (1.9,-0.15);
	\draw[->, gray!50!white] (1.1,0)--(1.9,0);
	\draw[->, double, gray!50!white] (1.5,-0.05)--(1.5,-0.4);
	\draw[->, double, gray!50!white] (1.5,0.4)--(1.5,0.05);
	\end{tikzpicture}$
	& $[1;0]$ & outer & \checkmark & $\times$ & \checkmark \\\hline
	
	$[\{0\}]$ &
	$\begin{tikzpicture}[baseline = -3]
	\filldraw
	(0,0) circle [radius = 1pt];
	\filldraw[gray!50!white]
	(1,0) circle [radius = 1pt]
	(2,0) circle [radius = 1pt];
	\draw[->, gray!50!white] (0.1,0)--(0.9,0);
	\draw[->, gray!50!white] (1.1,0.15) .. controls (1.4,0.6) and (1.6,0.6) .. (1.9,0.15);
	\draw[->, gray!50!white] (1.1,-0.15) .. controls (1.4,-0.6) and (1.6,-0.6) .. (1.9,-0.15);
	\draw[->, gray!50!white] (1.1,0)--(1.9,0);
	\draw[->, double, gray!50!white] (1.5,-0.05)--(1.5,-0.4);
	\draw[->, double, gray!50!white] (1.5,0.4)--(1.5,0.05);
	\end{tikzpicture}$
	& $[0]$ & outer & \checkmark & $\times$ & \checkmark \\\hline
\end{tabular}
\caption{Some faces of $[2;0,2]$}\label{faces}
\end{table}

The hyperfaces of $\nq$ are precisely the maximal faces of $\nq$ in the following sense.
\begin{proposition}[{\cite[Proposition 6.2.4]{Watson}}]\label{Watson}
	Any face map $[\alpha;\aalpha] : \mp \to \nq$ of positive codimension factors through a hyperface of $\nq$.
\end{proposition}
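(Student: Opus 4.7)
The plan is by case analysis on the simplicial part $\alpha : [m] \to [n]$ of the given face map $[\alpha;\aalpha]$, exhibiting in each case a hyperface of $\nq$ through which it factors.

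If $\alpha = \mathrm{id}_{[n]}$, positive codimension forces some component $\alpha_k$ to miss an element $i \in [q_k]$; factoring $\alpha_k = \delta^i \circ \alpha'_k$ then yields a factorisation of $[\alpha;\aalpha]$ through the vertical hyperface $\delta_v^{k;i}$. The outer case $\alpha(0) \ge 1$ (with $\alpha(m) \le n-1$ being dual) divides according to $q_1$: since $1 \le \alpha(0)$, the component $\alpha_1$ is absent from $\aalpha$, so if $q_1 = 0$ then $\delta_h^0$ is already a hyperface and $[\alpha;\aalpha]$ factors through it by shifting $\alpha$ down by one, whereas if $q_1 \ge 1$ the value of the first hom-category of the intermediate object is immaterial and $[\alpha;\aalpha]$ factors through $\delta_v^{1;0}$.

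The remaining, and main, case is when $\alpha(0) = 0$ and $\alpha(m) = n$ but $\alpha$ misses some $0 < j < n$. There is then a unique $\ell$ with $\alpha(\ell-1) < j < j+1 \le \alpha(\ell)$, and the face-map condition renders the pair $(\alpha_j, \alpha_{j+1}) : [p_\ell] \to [q_j] \times [q_{j+1}]$ order-preserving and jointly monic. The key combinatorial step is to observe that its image is a chain in the finite grid $[q_j] \times [q_{j+1}]$, and can therefore be extended to a maximal chain of length $q_j + q_{j+1}$, which is precisely a shuffle $\langle\beta,\beta'\rangle \in \shuffle(q_j, q_{j+1})$. The resulting $\gamma : [p_\ell] \to [q_j + q_{j+1}]$, satisfying $\beta\gamma = \alpha_j$ and $\beta'\gamma = \alpha_{j+1}$, can then be assembled, together with the unique $\alpha'$ with $\delta^j \circ \alpha' = \alpha$ and with the remaining components of $\aalpha$, into a candidate $[\alpha';\aalpha']$ satisfying $\delta_h^{j;\langle\beta,\beta'\rangle} \circ [\alpha';\aalpha'] = [\alpha;\aalpha]$.

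The main obstacle is checking that $[\alpha';\aalpha']$ is actually a face map. Monicity of $\gamma$ is immediate from the joint monicity of $(\alpha_j,\alpha_{j+1})$, and via the identity $(\alpha_j, \alpha_{j+1}) = (\beta,\beta')\circ\gamma$ the joint-monicity condition for the new family of components reduces to that for the original, so I expect the verification to be routine bookkeeping rather than requiring deeper combinatorics.
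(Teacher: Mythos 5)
Your proof is correct, but note that the paper does not actually prove this proposition: it is quoted from Watson's thesis, so there is no in-text argument to compare against. Your outer cases reproduce (up to duality, and up to the immaterial choice of $\delta_v^{1;0}$ versus $\delta_v^{1;q_1}$ when the first hom-component is unused) the case analysis in the paper's proof of \cref{inert factors through outer}, and the genuinely new content is your inner horizontal case, where the chain-extension argument is exactly the right idea: the image of the order-preserving map $(\alpha_j,\alpha_{j+1})$ is a chain in $[q_j]\times[q_{j+1}]$, maximal chains there are precisely the $(q_j,q_{j+1})$-shuffles, and $\gamma=\alpha_j+\alpha_{j+1}$ is the required lift through $\langle\beta,\beta'\rangle$. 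One small inaccuracy: the face-map condition does \emph{not} make the pair $(\alpha_j,\alpha_{j+1})$ jointly monic --- only the full family $\{\alpha_k : \alpha(\ell-1)<k\le\alpha(\ell)\}$ is --- so $\gamma$ need not be monic. For instance, for $\nq=[3;1,1,1]$ and $\alpha=\{0,3\}$ with the jointly monic triple whose images are $(0,0,0),(0,0,1),(0,1,1),(1,1,1)$, the map $\gamma=\alpha_1+\alpha_2=\{0,0,1,2\}$ is not injective. This is harmless: a face operator is not required to have monic individual components, and the property you actually need --- joint monicity of the family obtained by replacing $\alpha_j,\alpha_{j+1}$ with $\gamma$ --- does follow from the identity $(\alpha_j,\alpha_{j+1})=\langle\beta,\beta'\rangle\circ\gamma$ exactly as you say.
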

We will also need the following outer version of this proposition.
\begin{proposition}\label{inert factors through outer}
	Any outer face map $[\alpha;\aalpha] : \mp \to \nq$ factors through an outer hyperface of $\nq$.
\end{proposition}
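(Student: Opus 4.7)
The plan is a straightforward case analysis based on which of the defining conditions of ``outer'' is violated, and in each case producing the factorisation explicitly by inspection of the wreath product composition.

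First I would enumerate the outer hyperfaces of $\nq$: scanning the list of hyperfaces above, these are precisely $\delta_h^0$ (when $q_1 = 0$), $\delta_h^n$ (when $q_n = 0$), and for each $1 \le k \le n$ with $q_k \ge 1$ the two vertical hyperfaces $\delta_v^{k;0}$ and $\delta_v^{k;q_k}$. (All other hyperfaces are inner because every entry of the underlying simplicial operators preserves both endpoints.) Saying $[\alpha;\aalpha]:\mp\to\nq$ is outer means that at least one of the following holds: (a) $\alpha(0)>0$; (b) $\alpha(m)<n$; (c) some $\alpha_k(0)>0$; or (d) some $\alpha_k(p_\ell)<q_k$, where $\ell$ is determined by $\alpha(\ell-1)<k\le\alpha(\ell)$.

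I would then handle each failure. For (c) and (d), since $\alpha_k$ fails to preserve an endpoint we automatically have $q_k\ge1$, so $\delta_v^{k;0}$ (resp.~$\delta_v^{k;q_k}$) is an outer hyperface, and $\alpha_k$ visibly factors through $\delta^0:[q_k-1]\to[q_k]$ (resp.~$\delta^{q_k}$) while all other data is unchanged, giving the desired factorisation. For (a) and (b), split on whether the adjacent entry of $\qq$ vanishes. If $\alpha(0)>0$ and $q_1=0$, then $\delta_h^0$ is a hyperface and $\alpha$ factors as $\delta^0\circ\alpha'$ with $\alpha'(i)=\alpha(i)-1$, producing the factorisation. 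The interesting subcase is $\alpha(0)>0$ with $q_1\ge1$: here $\delta_h^0$ is \emph{not} a hyperface, but the key observation is that because the indexing convention gives $[\alpha;\aalpha]$ the data $\alpha_{\alpha(0)+1},\dots,\alpha_{\alpha(m)}$, the $\alpha_1$ slot is simply absent. Thus the same tuple of data also specifies a cellular operator $[\alpha;\aalpha']:\mp\to[n;q_1-1,q_2,\dots,q_n]$, and post-composition with the outer hyperface $\delta_v^{1;0}$ (which acts as identity outside the first vertical slot) recovers $[\alpha;\aalpha]$. The case $\alpha(m)<n$ is handled symmetrically using $\delta_h^n$ or $\delta_v^{n;q_n}$.

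The main conceptual point---and the only place there is anything to notice---is this last trick: the ``obvious'' outer horizontal hyperface may not exist in the required codimension, but one can always fall back on an outer \emph{vertical} hyperface provided the relevant $q_k\ge1$, precisely because the wreath product indexing omits the slots $\alpha_k$ for $k\le\alpha(0)$ or $k>\alpha(m)$. Since we need only \emph{some} factorisation, checking the cases in any fixed priority order (say (a), (b), (c), (d)) completes the proof.
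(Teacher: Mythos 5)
Your proof is correct and follows essentially the same route as the paper's: a case analysis on which endpoint-preservation condition fails, using the outer vertical hyperfaces $\delta_v^{k;0}$, $\delta_v^{k;q_k}$ when some $\alpha_k$ is at fault, and, when $\alpha$ itself is at fault, falling back from the (possibly non-hyperface) horizontal face to an outer vertical hyperface whose slot is absent from the data of $[\alpha;\aalpha]$ --- which is exactly the paper's case (ii). The only cosmetic difference is that the paper treats the ``top element'' cases and invokes duality, whereas you enumerate all four failure modes explicitly.
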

\begin{proof}
	Recall that $[\alpha;\aalpha]$ is inner (= non-outer) if and only if $\alpha$ and all $\alpha_k$ preserve the top and bottom elements.
	We will consider the cases where either $\alpha$ or some $\alpha_k$ does not preserve the top elements; the other cases can be treated dually.
	\begin{itemize}
		\item[(i)] If $\alpha(m) \neq n$ and $q_n = 0$ then we can factorise $[\alpha;\aalpha]$ as 
		\[
		\begin{tikzcd}
		{\mp}
		\arrow [r, "{[\beta;\aalpha]}"] &
		{[n-1;q_1,\dots,q_{n-1}]}
		\arrow [r, "\delta_h^n"] &
		{\nq}
		\end{tikzcd}
		\]
		where $\beta : [m] \to [n-1]$ is given by $\beta(i) = \alpha(i)$.
		\item[(ii)] If $\alpha(m) \neq n$ and $q_n \ge 1$ then we can factorise $[\alpha;\aalpha]$ as
		\[
		\begin{tikzcd}
		{\mp}
		\arrow [r, "{[\alpha;\aalpha]}"] &
		{[n;q_1,\dots,q_{n-1},q_n-1]}
		\arrow [r, "\delta_v^{n;0}"] &
		{\nq}.
		\end{tikzcd}
		\]
		\item[(iii)] If $\alpha_k(p_\ell) \neq q_k$ for some $\alpha(\ell-1) < k \le \alpha(\ell)$ then we can factorise $[\alpha;\aalpha]$ as
		\[
		\begin{tikzcd}
		{\mp}
		\arrow [r, "{[\alpha;\bbeta]}"] &
		{[n;q_1,\dots,q_k-1,\dots,q_n]}
		\arrow [r, "\delta_v^{k;q_k}"] &
		{\nq}
		\end{tikzcd}
		\]
		where $\beta_k: [p_\ell] \to [q_k-1]$ is given by $\beta_k(i) = \alpha_k(i)$ and $\beta_{k'} = \alpha_{k'}$ for ${k'} \neq k$.
	\end{itemize}
\end{proof}

\subsection{Cellular sets}\label{cellular sets}
We will write $\hattheta$ for the category $[\Theta_2\op,\Set]$ of \emph{cellular sets}.
If $X$ is a cellular set, $x \in X_{n;\qq} \defeq X(\nq)$ and $[\alpha;\aalpha] :\mp \to \nq$ is a cellular operator, then we will write $x \cdot [\alpha;\aalpha]$ for the image of $x$ under $X([\alpha;\aalpha])$.
The Reedy structure on $\cell$ is (\emph{EZ} and hence) \emph{elegant}, which means the following.
\begin{theorem}[{\cite[Corollary 4.5]{Bergner;Rezk:Reedy}}]\label{elegance}
	For any cellular set $X$ and for any $x \in X_{m;\pp}$, there is a unique way to express $x$ as $x = y \cdot [\alpha;\aalpha]$ where $[\alpha;\aalpha] : \mp \to \nq$ is a degeneracy operator and $y \in X_{n;\qq}$ is non-degenerate.
\end{theorem}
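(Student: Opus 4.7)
The plan is to deduce the theorem from the elegance of the Reedy category $\cell$ in the sense of Bergner--Rezk: pushouts in $\cell$ of pairs of degeneracy operators with common domain exist and are themselves pairs of degeneracies. Granted this, a standard Eilenberg--Zilber style argument yields both existence and uniqueness of the stated decomposition.

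For existence I would induct on $\dim\nq$. If $x$ is already non-degenerate, then the identity on $\nq$ works as the degeneracy. Otherwise $x = y' \cdot \sigma'$ for some non-identity degeneracy $\sigma'$ and some $y'$; since $\sigma'$ has $\alpha$ and all $\alpha_k$ surjective but is not itself an identity, its codomain in $\cell$ has strictly smaller dimension than $\nq$, so the inductive hypothesis applied to $y'$ yields $y' = y \cdot \sigma''$ with $y$ non-degenerate and $\sigma''$ a degeneracy. Composites of componentwise surjections remain componentwise surjective, so the composite of $\sigma'$ and $\sigma''$ is a degeneracy realising $x$ as $y \cdot (\sigma'' \circ \sigma')$.

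For uniqueness, suppose $x = y_1 \cdot \sigma_1 = y_2 \cdot \sigma_2$ with each $y_i$ non-degenerate and each $\sigma_i$ a degeneracy sharing the common source with $x$. Form the pushout of $(\sigma_1, \sigma_2)$ in $\cell$: elegance supplies this pushout and guarantees its two legs $\tau_1, \tau_2$ are again degeneracies. Viewing the $y_i$ through Yoneda as maps out of the corresponding representables into $X$, the relation $y_1 \cdot \sigma_1 = y_2 \cdot \sigma_2 = x$ activates the pushout's universal property and produces $z \in X_{m;\pp}$ with $y_i = z \cdot \tau_i$ for $i = 1, 2$. Since each $y_i$ is non-degenerate and each $\tau_i$ is a degeneracy, each $\tau_i$ must be an identity; this collapses the pushout and forces $(\sigma_1, y_1) = (\sigma_2, y_2)$.

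The substantive obstacle is elegance of $\cell$ itself, which is the content of Proposition 2.11 of Bergner--Rezk. Its proof exploits the wreath-product presentation $\cell = \Delta \wr \Delta$: one first forms the pushout of the top-level simplicial parts in $\Delta$ (where elegance is classical, with pushouts of codegeneracies computed via their jointly surjective images), then assembles a cellular pushout from the fibrewise $\Delta$-pushouts of the vertical components, and verifies that componentwise surjectivity is preserved at every level. This wreath-product bookkeeping is the only genuinely delicate point in the argument; the rest is formal manipulation of presheaves.
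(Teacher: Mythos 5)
The paper does not actually prove this statement---it is imported verbatim from Bergner--Rezk (their Corollary 4.5)---so there is no in-paper argument to compare against. Your outline is the standard Eilenberg--Zilber argument, and the existence half is fine: induct on $\dim$, using that a non-identity degeneracy operator strictly lowers the dimension and that degeneracy operators compose.

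There is, however, a genuine gap in the uniqueness step. You state the input as ``pushouts in $\cell$ of pairs of degeneracy operators with common domain exist and are themselves pairs of degeneracies,'' and then you ``activate the pushout's universal property'' against the presheaf $X$ to produce $z$ with $y_i = z \cdot \tau_i$. A pushout merely in $\cell$ gives no such universal property: the cocone $\hat y_1, \hat y_2$ lives in $\hattheta$, not in $\cell$, and the Yoneda embedding does not preserve colimits, so the square of representables associated to a pushout in $\cell$ need not be a pushout of cellular sets. What the argument actually needs---and what the EZ condition of Bergner--Rezk asserts---is that every span of degeneracies (split epimorphisms) admits an \emph{absolute} pushout, i.e.\ one preserved by every functor out of $\cell$, equivalently one exhibited as such by chosen sections of the $\sigma_i$ and $\tau_i$. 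Absoluteness is exactly what makes the square of representables a pushout in $\hattheta$, so that mapping into $X$ turns it into a pullback of sets and yields your $z$. With that corrected premise the remainder of your uniqueness argument is sound: non-degeneracy of $y_i$ forces each $\tau_i$ to be an identity, whence $y_1 = z = y_2$ and $\sigma_1 = \tau_1\sigma_1 = \tau_2\sigma_2 = \sigma_2$. (A second, cosmetic point: verifying absoluteness of these pushouts for $\cell = \Delta \wr \Delta$ is the real content of the cited result, and your final paragraph correctly locates it in the wreath-product bookkeeping, but the sections have to be carried along through that construction, not just surjectivity.)
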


\begin{definition}
A \emph{cellular subset} of $X \in \hattheta$ is a subfunctor of $X$.
If $S$ is a set of cells in $X \in \hattheta$ (not necessarily closed under the action of cellular operators), the smallest cellular subset $\overline{S}$ of $X$ containing $S$ is given by
\[
\overline{S}_{m;\pp} = \{s \cdot [\alpha;\aalpha] : s \in S_{n;\qq}, \begin{tikzcd} {\mp} \arrow [r, "{[\alpha;\aalpha]}"] & \nq \end{tikzcd}\}.
\]
We call $\overline{S}$ the cellular subset of $X$ \emph{generated by $S$}.
\end{definition}

\begin{abuse}
We will write $\cell\nq$ for the presheaf represented by $\nq \in \cell$.
If $[\alpha;\aalpha] : \mp \to \nq$ is a cellular operator, then the corresponding map $\cell\mp \to \cell\nq$ will also be denoted by $[\alpha;\aalpha]$.
Moreover, if $X \subset \cell\nq$ is a cellular subset and there exists a (necessarily unique) factorisation
\[
\begin{tikzcd}[row sep = small, column sep = small]
& X
\arrow [dd, "\subset", hook] \\
{\cell\mp}
\arrow [ur, dashed]
\arrow [dr, swap, "{[\alpha;\aalpha]}"] & \\
& {\cell\nq}
\end{tikzcd}
\]
then we abuse the notation and write $[\alpha;\aalpha]$ for the dashed map too.
Note that the domain of $[\alpha;\aalpha]$ is still the representable one and so \emph{$[\alpha;\aalpha]$ always corresponds to a single cell in its codomain.}
The convention introduced in \cref{Theta_2} extends to this context in the sense that any map in $\hattheta$ denoted by $\delta$ (with some decoration) will always have as codomain \emph{some cellular subset of $\cell\nq$} unless indicated otherwise.
\end{abuse}

There is a functor $\cell \to \Delta$ given by sending $[\alpha;\aalpha] : \mp \to \nq$ to $\alpha : [m] \to [n]$.
We will regard $\hatdelta$ as a full subcategory of $\hattheta$ via the embedding $\hatdelta \to \hattheta$ induced by this functor.
Hence the square
\[
\begin{tikzcd}
\Cat
\arrow [r]
\arrow [d, "N", swap] &
\twoCat
\arrow [d, "N"] \\
\hatdelta
\arrow [r, hook, "\subset"] &
\hattheta
\end{tikzcd}
\]
commutes up to isomorphism, where the upper horizontal map sends each category to the obvious locally discrete 2-category, and the vertical maps are the nerve functors induced by the inclusions $\Delta \incl \Cat$ and $\cell \incl \twoCat$.

\subsection{The category $\wreath$}\label{generalised wreath}
Most content of \cref{generalised wreath,box product,O-anodyne} is taken from David Oury's PhD thesis \cite{Oury}.

In this subsection, we will describe Oury's \emph{generalised wreath product} $\wreath$ which should be thought of as a category of presentations of certain cellular sets in terms of their ``horizontal'' and ``vertical'' components.
The \emph{box product} $\square : \wreath \to \widehat{\Delta \wr \Delta} = \hattheta$ defined in \cref{box product} then realises such presentations into actual cellular sets.
As mentioned in the introduction, the latter functor should be thought of as analogous to the box product functor $\square : \hatdelta \times \hatdelta \to \widehat{\Delta \times \Delta}$ for bisimplicial sets, hence the name.
In \cref{O-anodyne} we will use these tools to turn simplicial inner horns into cellular ones.

We start by going back to the representable cellular sets and ``decomposing'' them into simplicial sets, to motivate the definition of $\wreath$.

Since the ``length'' of $\nq$ is $n$, the horizontal component of $\cell\nq$ should be $\Delta[n]$.
The description of the hom-categories of $\nq$ tells us that the vertical component of $\cell\nq$ should assign the product $\Delta[q_{k+1}] \times \dots \times \Delta[q_\ell]$ to each 1-simplex $\{k,\ell\}$ in $\Delta[n]$.
The resulting functor $\chi_1 : \Delta[n]_1 \to \hatdelta$ (where $\Delta[n]_m$ is the set of $m$-simplices in $\Delta[n]$ regarded as a discrete category) then encodes the $\Cat$-enriched graph structure of $\nq$.
The (free) horizontal composition is witnessed by the canonical isomorphism
\[
\chi_1(\alpha \cdot \{0,1\}) \times \chi_1(\alpha \cdot \{1,2\}) \to \chi_1(\alpha \cdot \{0,2\})
\]
for each 2-simplex $\alpha : [2] \to [n]$.
These isomorphisms can be organised into a single natural isomorphism
\[
\begin{tikzcd}
{\Delta[n]_2} \arrow [d, swap, "{- \cdot \{0,2\}}"] \arrow [r, "\chi_2"] \arrow [dr, phantom, "\cong"{description}] & \hatdelta \times \hatdelta \arrow [d, "\times"]\\
{\Delta[n]_1} \arrow [r, "\chi_1", swap] & \hatdelta
\end{tikzcd}
\]
where the right vertical map is the binary product functor and $\chi_2$ is the unique functor induced by the universal property as in:
\[
\begin{tikzcd}
{\Delta[n]_1}
\arrow [r, "\chi_1"] &
\hatdelta \\
{\Delta[n]_2}
\arrow [u, "{- \cdot \{0,1\}}"]
\arrow [r, "\chi_2", dashed]
\arrow [d, swap, "{- \cdot \{1,2\}}"] &
\hatdelta \times \hatdelta
\arrow [u, swap, "\pi_1"]
\arrow [d, "\pi_2"] \\
{\Delta[n]_1}
\arrow [r, swap, "\chi_1"] &
\hatdelta
\end{tikzcd}
\]
These three squares can be seen as part of a pseudo-natural transformation
	\[
	\begin{tikzpicture}
	\node at (0,0) {$\Delta\op$};
	\node at (3,0) {$\CAT$};
	\node at (1.5,2) {$\Set$};
	\draw[->] (0.3,0.4) --node[left, scale = 0.7]{$\Delta[n]$} (1.2,1.6);
	\draw[->] (0.6,0) --node[below, scale = 0.7]{$\hatdelta^{(-)}$} (2.4,0);
	\draw[{Hooks[right]}->] (1.8,1.6) -- (2.7,0.4);
	\draw[double, ->] (1.5,1.3) --node[right, scale = 0.7]{$\chi$} (1.5,0.3);
	\end{tikzpicture}
	\]
into the pseudo-functor $\hatdelta^{(-)}$ which we now describe.
(Here $\CAT$ must be large enough to contain $\hatdelta$ and its powers as objects.)

The object part of $\hatdelta^{(-)}$ assigns to each $[m] \in \Delta$ the product $\hatdelta^m = \hatdelta \times \dots \times \hatdelta$ of $m$ copies of the category $\hatdelta$.
If $\beta : [k] \to [m]$ is a simplicial operator, then its image $\hatdelta^\beta : \hatdelta^m \to \hatdelta^k$ acts by
\[
\{S_j\}_{1 \le j \le m} \mapsto\left\{\prod_{\beta(i-1)<j\le \beta(i)}S_j\right\}_{1 \le i \le k}.
\]
Since $\hatdelta^\gamma \hatdelta^\beta$ is only naturally isomorphic (via suitably coherent isomorphisms) and not equal to $\hatdelta^{\beta\gamma}$, we obtain a pseudo-functor $\Delta\op \to \CAT$ instead of a strict (2-)functor.

We define the $[m]$-component $\chi_m : \Delta[n]_m \to \hatdelta^m$ of the pseudo-natural transformation $\chi$ by
\[
\chi_m(\alpha) = \left\lbrace \prod_{\alpha(i-1)<j \le \alpha(i)}\Delta[q_j]\right\rbrace_{1 \le i \le m}
\]
for each $\alpha : [m] \to [n]$.
To complete the description of $\chi$, we need to specify an appropriately coherent family of natural isomorphisms
\[
\begin{tikzcd}
{\Delta[n]_m} \arrow [d, swap, "- \cdot \beta"] \arrow [r, "\chi_m"] \arrow [dr, phantom, "\cong"{description}] & \hatdelta^m \arrow [d, "\hatdelta^\beta"]\\
{\Delta[n]_k} \arrow [r, "\chi_k", swap] & \hatdelta^k
\end{tikzcd}
\]
indexed by the simplicial operators $\beta : [k] \to [m]$.
But this amounts to giving an isomorphism
\[
\prod_{0 <  j \le m}\chi_1(\alpha\cdot\{j-1,j\}) \cong \chi_1(\alpha\cdot\{0,m\})
\]
for each $\alpha \in \Delta[n]_m$ compatible with the simplicial structure of $\Delta[n]$, and one can check that the canonical isomorphisms indeed form such a compatible family.
As we mentioned above for the case $m=2$, this isomorphism can be thought of as witnessing the $m$-ary horizontal composition.
The invertibility of this map says that $\nq$ is horizontally free, and the compatibility with the simplicial structure says that the horizontal composition is coherent in the sense that it is associative, the witnesses to associativity satisfy the pentagon law, and so on.

This ``decomposition'' provides a motivation for thinking of the objects in the following category as presentations of certain cellular sets.
\begin{definition}
	For any simplicial set $W$, let $\bigl(\wreath\bigr)_W$ denote the category of pseudo-natural transformations
	\[
	\begin{tikzpicture}
	\node at (0,0) {$\Delta\op$};
	\node at (3,0) {$\CAT$};
	\node at (1.5,2) {$\Set$};
	\draw[->] (0.3,0.4) --node[left, scale = 0.7]{$W$} (1.2,1.6);
	\draw[->] (0.6,0) --node[below, scale = 0.7]{$\hatdelta^{(-)}$} (2.4,0);
	\draw[{Hooks[right]}->] (1.8,1.6) -- (2.7,0.4);
	\draw[double, ->] (1.5,1.3) --node[right, scale = 0.7]{$\chi$} (1.5,0.3);
	\end{tikzpicture}
	\]
	and modifications between them.
\end{definition}

A morphism $\chi \to \chi'$ in the category $\bigl(\wreath\bigr)_W$ essentially amounts to a family of simplicial maps $\chi_1(\alpha) \to \chi'_1(\alpha)$ indexed by $\alpha \in W_1$ that is compatible with the pseudo-naturality isomorphisms in an appropriate sense.
In particular, we have the following proposition.

\begin{proposition}\label{bicategorical Yoneda}
	There is an equivalence of categories
	\[
	\bigl(\wreath\bigr)_{\Delta[n]}\simeq \hatdelta^n
	\]
	whose object part is given by evaluating each pseudo-natural transformation at the unique non-degenerate $n$-simplex in $\Delta[n]$.
\end{proposition}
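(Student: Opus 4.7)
The plan is to recognise this as an instance of the bicategorical Yoneda lemma applied to the pseudo-functor $\hatdelta^{(-)}$: since $\bigl(\wreath\bigr)_W$ is by construction the category of pseudo-natural transformations from $W$ to $\hatdelta^{(-)}$, representability of $\Delta[n]$ should make this category equivalent to the value $\hatdelta^n$ of $\hatdelta^{(-)}$ at $[n]$, realised by evaluation at the universal element $\id_{[n]} \in \Delta[n]_n$. I would construct the equivalence by hand rather than cite a black box, because here the source pseudo-functor is a presheaf of (discrete categories of) sets, which simplifies the coherence data and makes the explicit formulas cleaner.

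The forward functor $E : \bigl(\wreath\bigr)_{\Delta[n]} \to \hatdelta^n$ sends $\chi$ to $\chi_n(\id_{[n]})$ and a modification to its component at $\id_{[n]}$. For the quasi-inverse $R : \hatdelta^n \to \bigl(\wreath\bigr)_{\Delta[n]}$, given $T \in \hatdelta^n$ I would define a pseudo-natural transformation $\chi^T$ by setting $\chi^T_m(\alpha) \defeq \hatdelta^\alpha(T)$ for each $\alpha \in \Delta[n]_m$, with the pseudo-naturality isomorphism associated to $\beta : [k] \to [m]$ taken to be the composition constraint
\[
\hatdelta^\beta(\chi^T_m(\alpha)) = \hatdelta^\beta\hatdelta^\alpha(T) \xrightarrow{\cong} \hatdelta^{\alpha\beta}(T) = \chi^T_k(\alpha\beta)
\]
of the pseudo-functor $\hatdelta^{(-)}$. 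A morphism $f : T \to T'$ in $\hatdelta^n$ would then be sent to the modification whose $\alpha$-component is $\hatdelta^\alpha(f)$.

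To check that $E$ and $R$ are mutually quasi-inverse, the unit constraint of $\hatdelta^{(-)}$ supplies an isomorphism $\chi^T_n(\id_{[n]}) = \hatdelta^{\id_{[n]}}(T) \cong T$ natural in $T$, witnessing $E \circ R \cong \id$. Conversely, given $\chi$, the pseudo-naturality isomorphism of $\chi$ associated to $\alpha : [m] \to [n]$ and evaluated at $\id_{[n]} \in \Delta[n]_n$ yields an isomorphism
\[
\chi^{E(\chi)}_m(\alpha) = \hatdelta^\alpha(\chi_n(\id_{[n]})) \xrightarrow{\cong} \chi_m(\id_{[n]} \cdot \alpha) = \chi_m(\alpha),
\]
and these components assemble into a modification $R(E(\chi)) \cong \chi$ natural in $\chi$.

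The main obstacle is the bookkeeping for coherence: verifying that $\chi^T$ really satisfies the unit and composition axioms for a pseudo-natural transformation, that the assignment on morphisms really lands in modifications, and that the two comparison natural isomorphisms above are themselves modifications in $\chi$ and $T$ respectively. Each such verification reduces to either a pseudo-functor axiom for $\hatdelta^{(-)}$ or one of the coherence axioms for the pseudo-naturality data of $\chi$, so there is no real content beyond carefully laying out the coherence diagrams.
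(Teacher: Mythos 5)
Your proposal is correct and matches the paper's approach: the paper proves this proposition by simply citing the bicategorical Yoneda lemma (Street, \emph{Fibrations in bicategories}, \textsection 1.9), and your explicit construction — evaluation at $\id_{[n]}$ one way, $T \mapsto \bigl(\alpha \mapsto \hatdelta^\alpha(T)\bigr)$ with the composition constraints of $\hatdelta^{(-)}$ the other way, with unit and counit built from the pseudo-functor coherence and the pseudo-naturality squares of $\chi$ — is precisely the standard proof of that lemma specialised to this situation. Unpacking the citation as you do is a perfectly valid (and arguably more self-contained) way to present it.
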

\begin{proof}
	This is an instance of the bicategorical Yoneda lemma \cite[\textsection1.9]{Street:fibrations}.
\end{proof}

	If $f : W \to W'$ is a map in $\hatdelta$, then there is a functor $f^*:\bigl(\wreath\bigr)_{W'} \to \bigl(\wreath\bigr)_W$ given by composing with $f$,	\emph{i.e.}~$f^*(\chi)$ is the pseudo-natural transformation:
	\[
	\begin{tikzpicture}
	\node at (0,0) {$\Delta\op$};
	\node at (3,0) {$\CAT$};
	\node at (1.5,2) {$\Set$};
	\draw[->] (0.4,0.3) .. controls (0.8,0.3) and (1.3,1) .. node[right, scale = 0.7, near start]{$W'$} (1.3,1.5);
	\draw[->] (0.2,0.5) .. controls (0.2,1) and (0.7,1.7) .. node[left, scale = 0.7]{$W$} (1.1,1.7);
	\draw[->] (0.6,0) --node[below, scale = 0.7]{$\hatdelta^{(-)}$} (2.4,0);
	\draw[{Hooks[right]}->] (1.8,1.6) -- (2.7,0.4);
	\draw[double, ->] (1.5,1.3) --node[right, scale = 0.7]{$\chi$} (1.5,0.3);
	\draw[double, ->] (0.55,1.15) --node[above, scale = 0.7]{$f$} (0.95,0.85);
	\end{tikzpicture}
	\]
	Moreover, sending each $f$ to $f^*$ defines a (strict) functor $\bigl(\wreath\bigr)_{(-)} : \hatdelta\op \to \CAT$.
\begin{definition}
	The \emph{generalised wreath product} $\wreath$ is the total category of the Grothendieck construction of the functor $\bigl(\wreath\bigr)_{(-)}$.
\end{definition}
More explicitly, the category $\wreath$ has as objects the pairs $(W,\chi)$ as above and as morphisms pairs $(f,\omega) : (W,\chi) \to (W',\chi')$ where $f : W \to W'$ is a morphism of simplicial sets and $\omega : \chi \to f^*(\chi')$ is a modification between the pseudo-natural transformations.

\begin{remark}
	For any monoidal category $\V$, one can construct a similar category $\hatdelta\wr\V$ by replacing the pseudo-functor $\hatdelta^{(-)}$ with $\V^{(-)}$ (whose morphism part is defined using the monoidal structure).
	In fact, Oury originally described $\wreath$ as a particular instance of this general construction.
\end{remark}

\subsection{The functors $\square$ and $\square_n$}\label{box product}
We start by making precise the ``decomposition'' of representable cellular sets discussed in the previous subsection.
\begin{proposition}[{\cite[Observation 3.53 and Lemma 3.60]{Oury}}]
	Sending each $\nq$ to the image of
	\[
	\bigl(\Delta[q_1],\dots,\Delta[q_n]\bigr) \in \hatdelta^n
	\]
	under the equivalence $\hatdelta^n \simeq \bigl(\wreath\bigr)_{\Delta[n]}$ of \cref{bicategorical Yoneda} defines the object part of a full embedding $\cell \incl \wreath$.
\end{proposition}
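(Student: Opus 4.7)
The plan is to extend the stated object assignment to a functor $\cell \to \wreath$ and then read off the full embedding property using the bicategorical Yoneda lemma. Write $\chi_\nq$ for the pseudo-natural transformation attached to $\nq$, so that $(\Delta[n], \chi_\nq)$ is the image in $\wreath$. On morphisms, I send a cellular operator $[\alpha;\aalpha] : \mp \to \nq$ to the pair $(\alpha, \omega)$, where $\alpha : \Delta[m] \to \Delta[n]$ is the simplicial map represented by $\alpha : [m] \to [n]$ and $\omega : \chi_\mp \to \alpha^* \chi_\nq$ is the modification whose component at the non-degenerate top-dimensional simplex $\id_{[m]} \in \Delta[m]_m$ is the morphism
\[
\left(\prod_{\alpha(\ell-1) < k \le \alpha(\ell)} \alpha_k\right)_{1 \le \ell \le m} : (\Delta[p_1], \dots, \Delta[p_m]) \to \left(\prod_{\alpha(\ell-1) < k \le \alpha(\ell)} \Delta[q_k]\right)_{1 \le \ell \le m}
\]
in $\hatdelta^m$. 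By \cref{bicategorical Yoneda} applied to $W = \Delta[m]$, together with a routine pseudo-naturality check that reduces to compatibility with the canonical product-restructuring isomorphisms, this datum uniquely extends to a modification $\omega$. Functoriality then follows because horizontal composition of cellular operators in $\cell = \Delta \wr \Delta$ multiplies out exactly in the form required for the composite modification in $\wreath$.

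For the full embedding property, fix a morphism $(f,\omega) : (\Delta[m],\chi_\mp) \to (\Delta[n],\chi_\nq)$ in $\wreath$. The underlying map $f$ corresponds under the ordinary Yoneda lemma to a unique simplicial operator $\alpha : [m] \to [n]$. By \cref{bicategorical Yoneda}, the modification $\omega$ is entirely determined by its evaluation at $\id_{[m]}$, which is a morphism in $\hatdelta^m$ of the form
\[
(\Delta[p_1],\dots,\Delta[p_m]) \to \left(\prod_{\alpha(\ell-1) < k \le \alpha(\ell)} \Delta[q_k]\right)_{1 \le \ell \le m}.
\]
Such a morphism unpacks as a tuple of simplicial maps $\Delta[p_\ell] \to \prod_{\alpha(\ell-1) < k \le \alpha(\ell)} \Delta[q_k]$, and by the universal property of products together with the Yoneda lemma this is in turn equivalent to a family $(\alpha_k : [p_\ell] \to [q_k])_{\alpha(\ell-1) < k \le \alpha(\ell)}$. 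This is precisely the vertical data needed to specify a cellular operator $[\alpha;\aalpha] : \mp \to \nq$ lying above $\alpha$, establishing bijectivity on hom-sets. Injectivity on objects is immediate since the first component $\Delta[n]$ recovers $n$ and evaluating $\chi_\nq$ at $\id_{[n]}$ recovers $(\Delta[q_1],\dots,\Delta[q_n])$ and hence $\qq$.

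The main technical obstacle is the bookkeeping around pseudo-naturality: one must verify that the single datum specifying $\omega$ at $\id_{[m]}$ genuinely extends to a modification, which entails compatibility with the coherence isomorphisms $\hatdelta^\gamma \hatdelta^\beta \cong \hatdelta^{\beta\gamma}$. Because these isomorphisms are nothing but re-bracketings of iterated products, every required square in $\hatdelta$ collapses to a tautological identity between canonical product maps. The secondary, notational, difficulty is threading the bicategorical Yoneda correspondence through the Grothendieck construction defining $\wreath$, in particular keeping straight the contravariance of $(-)^*$ against the covariance of the underlying simplicial map. Neither issue carries mathematical content beyond what is already implicit in the description of $\cell$ as the wreath product $\Delta \wr \Delta$.
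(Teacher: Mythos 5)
The paper does not prove this proposition itself --- it is quoted directly from Oury's thesis --- so there is no in-paper argument to compare against; your reconstruction is correct and is the natural unwinding of the definitions, with \cref{bicategorical Yoneda} reducing both the construction of the modification $\omega$ from the data $\aalpha$ and the fullness/faithfulness check to the ordinary Yoneda lemma plus the universal property of products in $\hatdelta$. One cosmetic slip: the $\ell$-th component of $\omega$ at $\id_{[m]}$ should be the pairing $\langle \alpha_k \rangle_{\alpha(\ell-1)<k\le\alpha(\ell)} : \Delta[p_\ell] \to \prod_{k}\Delta[q_k]$ rather than the product map $\prod_k \alpha_k$ (whose domain would be a power of $\Delta[p_\ell]$); your later appeal to the universal property of the product shows this is what you intend.
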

\begin{definition}
	The \emph{box product} $\square : \wreath \to \hattheta$ is the nerve functor induced by this embedding.
\end{definition}
Note that the embedding being full is equivalent to the composite
\[
\begin{tikzcd} \cell \arrow [r, hook] & \wreath \arrow [r, "\square"] & \hattheta \end{tikzcd}
\]\
being naturally isomorphic to the Yoneda embedding.

\begin{remark}
	We will briefly describe how Oury's box product functor is related to Rezk's \emph{intertwining functor} \cite[\textsection4.4]{Rezk:cartesian}
	\[
	V : \Delta \wr \bigl[\C\op,\hatdelta\bigr] \to \bigl[(\Delta \wr \C)\op, \hatdelta\bigr].
	\]
	If the reader is not familiar with Rezk's work on $\Theta_n$-spaces, they may safely ignore this remark.
	One can check that restricting the intertwining functor to the obvious ``discrete'' objects yields
	\[
	V : \Delta \wr \bigl[\C\op,\Set\bigr] \to \bigl[(\Delta \wr \C)\op, \Set\bigr]
	\]
	and so in particular we obtain $V : \Delta \wr \hatdelta \to \hattheta$ for $\C = \Delta$.
	The domain of this functor is equivalent to the full subcategory of $\wreath$ spanned by the objects of the form $(\Delta[n],\chi)$, and $\begin{tikzcd} \Delta \wr \hatdelta \arrow[r, hook] & \wreath \arrow [r, "\square"] & \hattheta \end{tikzcd}$ is naturally isomorphic to $V$.
\end{remark}

Given any cartesian fibration $P : \mathscr{E} \to \B$ and $B \in \B$, let $\B_{/B}$ and $\mathscr{E}_B$ denote the slice and the fibre over $B$ respectively.
Then there is a functor
\[
H : \B_{/B}\times\mathscr{E}_B \to \mathscr{E}
\]
whose object part is given by sending each pair $(f, E)$ to the domain $f^*E$ of a cartesian lift $\tilde f : f^*E \to E$ of $f$.
For any map $g : A_1 \to A_2$ over $B$ and any map $e : E_1 \to E_2$ in $\E_B$, we can factor $e \circ \tilde f_1$ uniquely through the cartesian lift $\tilde f_2$ as in
\[
\begin{tikzpicture}
\node at (1.5,0) {$f_2^*E_2$};
\node at (0,2) {$f_1^*E_1$};
\node at (3,2) {$E_1$};
\node at (3,1) {$E_2$};
\node at (7,0) {$A_2$};
\node at (5.5,2) {$A_1$};
\node at (8.5,2) {$B$};
\draw[->] (0.5,2) -- (2.5,2);
\draw[->, dashed] (0.3,1.6) -- (1.2,0.4);
\draw[->] (3,1.7) -- (3,1.3);
\draw[->] (1.8,0.2) -- (2.7,0.8);
\draw[->] (6,2) -- (8,2);
\draw[->] (5.8,1.6) -- (6.7,0.4);
\draw[->] (7.3,0.4) -- (8.2,1.6);
\node[scale = 0.7] at (1.5,2.2) {$\tilde f_1$};
\node[scale = 0.7] at (3.2,1.5) {$e$};
\node[scale = 0.7] at (2.5,0.3) {$\tilde f_2$};
\node[scale = 0.7] at (7,2.2) {$f_1$};
\node[scale = 0.7] at (8,0.9) {$f_2$};
\node[scale = 0.7] at (6,0.9) {$g$};
\draw[rounded corners] (-0.5,-0.5) rectangle (3.5,2.5);
\draw[rounded corners] (5,-0.5) rectangle (9,2.5);
\draw[|->] (3.8,1) -- (4.7,1);
\node at (4.25,1.2) {$P$};
\end{tikzpicture}
\]
and this defines the morphism part of $H$.

\begin{definition}
	Let $\square_n$ denote the composite functor
	\[
	\square_n :
	\begin{tikzcd}
	\hatdelta_{/\Delta[n]} \times \underbrace{\hatdelta \times \dots \times \hatdelta}_n
	\arrow [r] &
	\hatdelta_{/\Delta[n]} \times \bigl(\wreath\bigr)_{\Delta[n]}
	\arrow [r, "H"] &
	\wreath
	\arrow [r, "\square"] &
	\hattheta
	\end{tikzcd}
	\]
	where the first map is induced by the equivalence of \cref{bicategorical Yoneda} and the second map is an instance of the above construction.
\end{definition}

Note that we have $\square_n\bigl(\id_{\Delta[n]};\Delta[q_1],\dots,\Delta[q_n]\bigr) \cong \cell\nq$.
\begin{proposition}[{\cite[Lemmas 3.74 and 3.77]{Oury}}]
	The functor $\square_n$ preserves:
	\begin{itemize}
		\item small colimits in the first variable; and
		\item small connected colimits in each of the other $n$ variables.
	\end{itemize}
\end{proposition}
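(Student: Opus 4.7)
The approach is to derive an explicit pointwise formula for the cellular set $\square_n(f; X_1, \ldots, X_n)$, and then read off both colimit-preservation properties from it.

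Concretely, I would show that for $f : W \to \Delta[n]$ in $\hatdelta_{/\Delta[n]}$, simplicial sets $X_1, \ldots, X_n \in \hatdelta$, and $\mp \in \cell$,
\[
\square_n(f; X_1, \ldots, X_n)_{\mp} \cong \coprod_{g \in W_m} \prod_{k = fg(0) + 1}^{fg(m)} (X_k)_{p_{\ell(k)}},
\]
where $\ell(k) \in [m]$ is the unique index with $fg(\ell(k) - 1) < k \le fg(\ell(k))$. To derive this, I would unwind the definition of $\square_n$ step by step. Since $\square$ is the nerve induced by the full embedding $\cell \incl \wreath$, the cells of $\square(H(f,\chi))$ at $\mp$ correspond bijectively to morphisms $i(\mp) \to (W, f^*\chi)$ in $\wreath$, where $\chi \in \bigl(\wreath\bigr)_{\Delta[n]}$ is the pseudo-natural transformation corresponding to $(X_1, \ldots, X_n)$ under the equivalence of \cref{bicategorical Yoneda}. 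Such a morphism consists of a cell $g \in W_m$ together with a modification from the pseudo-natural transformation classifying $(\Delta[p_1], \ldots, \Delta[p_m])$ to $(fg)^*\chi$. Applying \cref{bicategorical Yoneda} now over $\Delta[m]$, this modification is equivalent to a tuple of simplicial maps $\Delta[p_i] \to \prod_{fg(i-1) < k \le fg(i)} X_k$ for $1 \le i \le m$; projection onto individual factors followed by the ordinary Yoneda lemma yields the product in the formula.

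Given the formula, the first claim follows by rewriting the right-hand side as
\[
\coprod_{\tau : [m] \to [n]} \{g \in W_m : fg = \tau\} \times \prod_{k = \tau(0)+1}^{\tau(m)} (X_k)_{p_{\ell(k)}},
\]
so $(W, f)$ enters only through the fibre $\{g \in W_m : fg = \tau\}$. The fibre functor $\hatdelta_{/\Delta[n]} \to \Set$ preserves all small colimits: colimits in the slice are computed in $\hatdelta$ with induced structure map, the evaluation $(-)_m : \hatdelta \to \Set$ preserves colimits, and evaluation at $\tau$ from $\Set_{/\Delta[n]_m}$ preserves colimits since $\Delta[n]_m$ is a discrete set. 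Cartesian product with a constant set and coproducts both preserve all colimits, and colimits of cellular sets are pointwise, so the claim follows.

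For the remaining variables, fix $f$ and the other $X_{k'}$ and view the formula as a functor of $X_k$. Each summand is either constant in $X_k$ (when $k \notin \{fg(0)+1, \ldots, fg(m)\}$) or has the shape $A_g \times (X_k)_{p_{\ell(k)}}$ for some set $A_g$ independent of $X_k$. The latter preserves all colimits, since $A_g \times (-)$ is a left adjoint in $\Set$; the former preserves precisely the connected colimits, since for a constant functor of value $A$ and any small category $J$ one has $\colim_J A \cong A \times \pi_0(J)$. Coproducts preserve all colimits, so the combination preserves connected colimits in $X_k$. The principal obstacle in this whole argument is the derivation of the formula itself, which requires careful bookkeeping of the pseudo-naturality coherences tacit in \cref{bicategorical Yoneda}; once the formula is in hand, both preservation statements are essentially formal.
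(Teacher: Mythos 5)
The paper does not actually prove this proposition --- it is quoted verbatim from Oury's thesis (Lemmas 3.74 and 3.77) --- so there is no in-text argument to compare against; your proof has to stand on its own, and it does. The pointwise formula is correct: specialising to $f = \id_{\Delta[n]}$ and $X_k = \Delta[q_k]$ recovers exactly the description of a cellular operator $[\alpha;\aalpha] : \mp \to \nq$ as $\alpha = fg$ together with maps $\alpha_k : [p_{\ell(k)}] \to [q_k]$, which is a good sanity check, and the derivation via $\square(W,\chi')_{\mp} = \wreath(i\mp,(W,\chi'))$ followed by ordinary Yoneda (for $g \in W_m$), \cref{bicategorical Yoneda} over $\Delta[m]$ (for the modification), and ordinary Yoneda again (for the components $\Delta[p_i] \to \prod_k X_k$) is sound. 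The formula also transparently explains the asymmetry in the statement: the first variable enters only through the fibres $\{g \in W_m : fg = \tau\}$, and the fibre functor over an element of a discrete set preserves all colimits, including coproducts and the initial object; by contrast a variable $X_k$ may be entirely absent from a summand, yielding a constant functor that survives only connected (in particular nonempty) colimits --- consistent with the fact that $\square_n(f;\dots,\varnothing,\dots)$ is generally nonempty. The only schematic step, as you acknowledge, is the coherence bookkeeping hidden in the bicategorical Yoneda equivalence, but that is precisely the level of detail at which the cited lemmas of Oury operate, so I consider the argument complete.
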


\begin{definition}
	If $f: X \to Y$ is a map in $\hatdelta$, then we will write
	\[
	[\id;f] : \cell[1;X] \to \cell[1;Y]
	\]
	for its image under the functor $\square_1(\id_{\Delta[1]};-) : \hatdelta \to \hattheta$.
\end{definition}
This notation is motivated by the fact that $\square_1(\id_{\Delta[1]};-)$ extends the functor $\Delta \to \hattheta$ given by sending $\alpha :[m] \to [n]$ to $[\id;\alpha]:\cell[1;m] \to \cell[1;n]$.
It takes a simplicial set $X$ to its ``suspension'', \emph{i.e.}~the nerve of the following simplicially enriched category:
\[
\begin{tikzcd}
0
\arrow [loop left, "{\Delta[0]}"]
\arrow [r, bend left, "X"] &
1
\arrow [l, bend left, "\varnothing"]
\arrow [loop right, "{\Delta[0]}"]
\end{tikzcd}
\]

\subsection{Oury's elementary anodyne extensions}\label{O-anodyne}
Joyal's model structure for quasi-categories on $\hatdelta$ can be characterised using:
\begin{itemize}
	\item the \emph{boundary inclusions} $\partial \Delta[n] \incl \Delta[n]$;
	\item the (\emph{inner}) \emph{horn inclusions} $\horn^k[n] \incl \Delta[n]$; and
	\item the \emph{equivalence extension} $e : \Delta[0] \incl J$ which is the nerve of the inclusion $\{\lozenge\} \incl \{\lozenge \cong \blacklozenge\}$ into the chaotic category on two objects.
\end{itemize}
Oury constructs the $\cell$-version of those morphisms using the \emph{Leibniz box product} $\hat \square_n$ as follows.

First, we describe the Leibniz construction.
Suppose $F : \C_1 \times \dots \times \C_n \to \D$ is a functor and $\D$ has finite colimits.
Then the (\emph{$n$-ary}) \emph{Leibniz construction}
\[
\hat F : \C_1^\two \times \dots \times \C_n^\two \to \D^\two
\]
of $F$, where $\two = \{0 \to 1\}$ is the ``walking arrow'' category, is defined as follows.
Let $f_i : X^0_i \to X^1_i$ be an object in $\C_i^\two$ for each $i$.
Then the assignment $(\epsilon_1, \dots, \epsilon_n) \mapsto F(X_1^{\epsilon_1}, \dots, X_n^{\epsilon_n})$ defines a functor $G : \two^n \to \D$.
Denote by $I$ the inclusion of the full subcategory of $\two^n$ spanned by all non-terminal objects.
Then $G$ defines a cone under the diagram $GI$, so we obtain an induced morphism $\colim GI \to F(X^1_1, \dots, X^1_n)$.
Sending $(f_1, \dots, f_n)$ to this morphism defines the object part of $\hat F$, and the morphism part is defined in the obvious way by the universal property.

\begin{definition}
The \emph{boundary inclusion} $\partial\cell\nq \incl \cell\nq$ is defined by the $(n+1)$-ary Leibniz construction
\[
\hat \square_n \left(
\begin{tikzcd}
{\partial\Delta[n]}
\arrow [d, hook]\\
{\Delta[n]}
\end{tikzcd};
\begin{tikzcd}
{\partial\Delta[q_1]}
\arrow [d, hook]\\
{\Delta[q_1]}
\end{tikzcd},
\dots,
\begin{tikzcd}
{\partial\Delta[q_n]}
\arrow [d, hook]\\
{\Delta[q_n]}
\end{tikzcd}
\right)
\]
where the first argument $\partial\Delta[n] \incl \Delta[n]$ is regarded as a map over $\Delta[n]$ in the obvious way.
\end{definition}
As its name suggests, this map is the ``usual'' boundary inclusion.
\begin{proposition}[{\cite[Observation 3.84]{Oury}}]\label{boundary is boundary}
The map $\partial\cell\nq \incl \cell\nq$ is (isomorphic to) the inclusion of the cellular subset consisting precisely of those maps into $\nq$ that factor through objects of lower dimension.
\end{proposition}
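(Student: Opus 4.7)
The plan is to show that both sides of the claimed equality coincide with the union $S \subseteq \cell\nq$ of the images of all hyperfaces of $\nq$.

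For the right-hand side: by elegance (\cref{elegance}), any cell $[\alpha;\aalpha]: \mp \to \nq$ factors uniquely as a degeneracy followed by its non-degenerate part, which is a face of $\nq$. The cell factors through an object of strictly lower dimension if and only if this face has positive codimension, which by \cref{Watson} happens if and only if the cell factors through some hyperface. Hence the right-hand side is exactly $S$.

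For the left-hand side: we unfold the Leibniz construction. The domain of $\hat\square_n$ applied to the given inclusions is the colimit of the $(n+1)$-cube with the terminal vertex removed, so the induced map into $\cell\nq$ has as image the union, over non-top vertices $(\epsilon_0,\dots,\epsilon_n)$, of the sub-cellular-sets
\[
\square_n(\partial^{\epsilon_0}\Delta[n] \to \Delta[n];\partial^{\epsilon_1}\Delta[q_1],\dots,\partial^{\epsilon_n}\Delta[q_n]) \subseteq \cell\nq,
\]
where $\partial^0$ denotes boundary and $\partial^1$ denotes the full simplex. Using the colimit-preservation properties of $\square_n$ together with the boundary decompositions $\partial\Delta[m] = \bigcup_k \im(\delta^k)$, each such piece further decomposes. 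The ``$\epsilon_0 = 0$'' pieces contribute the images of $\delta_h^0$, $\delta_h^n$, and (after decomposing $\Delta[q_k] \times \Delta[q_{k+1}]$ into its shuffles for $1 \leq k \leq n-1$) the horizontal hyperfaces $\delta_h^{k;\langle\beta,\beta'\rangle}$. The ``$\epsilon_k = 0$'' pieces for $1 \leq k \leq n$ contribute the images of the vertical hyperfaces $\delta_v^{k;i}$. When $q_1 > 0$ or $q_n > 0$, the faces $\delta_h^0$ or $\delta_h^n$ are not themselves hyperfaces but factor through iterated vertical hyperfaces, so the total union still equals $S$.

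The main obstacle is the shuffle-decomposition step: one must identify $\square_n$ applied to an individual shuffle-simplex of $\Delta[q_k] \times \Delta[q_{k+1}]$ with the horizontal hyperface $\delta_h^{k;\langle\beta,\beta'\rangle}$. This requires unwinding the bicategorical Yoneda equivalence (\cref{bicategorical Yoneda}) and tracking the pullback in $\wreath$ along $\delta^k : \Delta[n-1] \to \Delta[n]$, under which the pseudo-natural transformation corresponding to $(\Delta[q_1],\dots,\Delta[q_n])$ pulls back to the $(n-1)$-tuple with $k$-th entry $\Delta[q_k] \times \Delta[q_{k+1}]$. The remainder is routine bookkeeping.
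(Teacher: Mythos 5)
First, a point of comparison: the paper does not actually prove this proposition. It is quoted from Oury's thesis (Observation 3.84) and only illustrated afterwards with the worked example $\nq=[2;0,2]$, so there is no in-paper argument to measure yours against. Your overall strategy is the natural one: identify both sides with the union of the images of the hyperfaces, handling the right-hand side via \cref{elegance} and \cref{Watson} (this half is correct, modulo spelling out why a degeneracy operator cannot factor through an object of lower dimension, which needs the retract/EZ argument), and the left-hand side by writing the Leibniz domain as the colimit of the punctured cube and computing the $n+1$ maximal pieces.

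There are, however, two gaps in the left-hand computation. The main one is the step ``using the colimit-preservation properties of $\square_n$ together with the boundary decompositions $\partial\Delta[m]=\bigcup_k\im(\delta^k)$'': in the variables other than the first, $\square_n$ preserves only \emph{connected} colimits, and the canonical presentations of $\partial\Delta[0]=\varnothing$ (empty colimit) and $\partial\Delta[1]=\Delta[0]\amalg\Delta[0]$ involve the initial object, to which $\square_n(\Delta[n];\dots,-,\dots)$ does \emph{not} assign $\varnothing$. Your method would predict that the piece $\square_n(\Delta[n];\dots,\partial\Delta[q_k],\dots)$ is empty when $q_k=0$, whereas the paper's own example shows $\square_2(\Delta[2];\partial\Delta[0],\Delta[2])$ is the nonempty subobject generated by $\delta_h^0$ and $[\{0\}]$. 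These $q_k=0$ pieces (equivalently, the value of $\square_n$ on $\varnothing$ in a non-first slot) must be computed directly from the definition of the box product as a nerve: a cell $[\alpha;\aalpha]$ lies in such a piece exactly when $k\notin\{\alpha(0)+1,\dots,\alpha(m)\}$, and one then observes that all such cells already lie in the $\epsilon_0=0$ piece, so the final identification of the union with $S$ survives --- but your argument as written does not establish it, and the same care is needed to see that the $q_k=1$ pieces really are the unions of the two corresponding vertical hyperfaces. The secondary gap is that the proposition asserts the Leibniz map is \emph{isomorphic to} a subobject inclusion, so beyond computing its image you must check it is monic; this does not follow from colimit preservation alone but from the fact that $\square_n$ preserves monomorphisms and intersections in each variable (equivalently, that $\hat\square_n$ of monomorphisms is a monomorphism), which is a separate lemma in Oury's thesis.
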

\begin{proposition}
The cellular subset $\partial\cell\nq \subset \cell\nq$ is generated by the hyperfaces of $\cell\nq$.
\end{proposition}
\begin{proof}
    This follows from \cref{Watson,boundary is boundary}.
\end{proof}

For example, when $\nq = [2;0,2]$ (see \cref{faces}):
\begin{itemize}
	\item $\square_2\bigl(\partial\Delta[2];\Delta[0],\Delta[2]\bigr) \subset \cell[2;0,2]$ is generated by $\delta_h^0$, $\delta_h^{1;\langle!,\id\rangle}$ and $\delta_h^2$;
	\item $\square_2\bigl(\Delta[2];\partial\Delta[0],\Delta[2]\bigr)$ is generated by $\delta_h^0$ and $[\{0\}]$; and
	\item $\square_2\bigl(\Delta[2];\Delta[0],\partial\Delta[2]\bigr)$ is generated by $\delta_v^{2;0}$, $\delta_v^{2;1}$ and $\delta_v^{2;2}$.
\end{itemize}
It can be seen from the defining colimit diagram that $\partial\cell[2;0,2]$ is the union of these three cellular subsets.
Thus $\partial\cell[2;0,2]$ is indeed generated by the hyperfaces of $\cell[2;0,2]$.

\begin{definition}
The \emph{$k$-th horizontal horn inclusion} $\horn_h^k\nq \incl \cell\nq$, where $0 \le k \le n$, is \[
\hat \square_n \left(
\begin{tikzcd}
{\horn^k[n]}
\arrow [d, hook]\\
{\Delta[n]}
\end{tikzcd};
\begin{tikzcd}
{\partial\Delta[q_1]}
\arrow [d, hook]\\
{\Delta[q_1]}
\end{tikzcd},
\dots,
\begin{tikzcd}
{\partial\Delta[q_n]}
\arrow [d, hook]\\
{\Delta[q_n]}
\end{tikzcd}
\right).
\]
It is called \emph{inner} if $1 \le k \le n-1$.
\end{definition}
\begin{proposition}\label{horizontal horn description}
The map $\horn_h^k\nq \incl \cell\nq$ is (isomorphic to) the inclusion of the cellular subset generated by all hyperfaces except for the $k$-th horizontal ones.
\end{proposition}
\begin{proof}
    It follows from \cref{boundary is boundary} and \cite[Lemma 3.11]{Oury} that this map is a monomorphism.
    Thus it suffices to check that it has the correct image, which can be done by considering the defining colimit diagram for $\horn_h^k\nq$.
\end{proof}
For example, when $\nq = [2;0,2]$ and $k = 1$:
\begin{itemize}
	\item $\square_2\bigl(\horn^1[2];\Delta[0],\Delta[2]\bigr)$ is generated by $\delta_h^0$ and $\delta_h^2$;
	\item $\square_2\bigl(\Delta[2];\partial\Delta[0],\Delta[2]\bigr)$ is generated by $\delta_h^0$ and $[\{0\}]$; and
	\item $\square_2\bigl(\Delta[2];\Delta[0],\partial\Delta[2]\bigr)$ is generated by $\delta_v^{2;0}$, $\delta_v^{2;1}$ and $\delta_v^{2;2}$.
\end{itemize}
Thus their union $\horn_h^1[2;0,2]$ is indeed generated by all hyperfaces except $\delta_h^{1;\langle!,\id\rangle}$.

\begin{remark}
	The faces $[\alpha;\aalpha] : \cell\mp \to \cell\nq$ not contained in the horizontal horn $\horn_h^k\nq$ are precisely the $k$-th horizontal ones.
	In particular, $\horn_h^k\nq$ may be missing faces of $\cell\nq$ that have codimension greater than $1$.
	For example, one can check that $\horn_h^1[2;1,1]$ is generated by the vertical hyperfaces
	\[
	\begin{gathered}
	\delta_v^{1;0} = \left\{
	\begin{tikzpicture}[baseline = -3]
	\filldraw
	(0,0) circle [radius = 1pt]
	(1,0) circle [radius = 1pt]
	(2,0) circle [radius = 1pt];
	\draw[->, gray!50!white] (0.1,0.1) .. controls (0.4,0.4) and (0.6,0.4) .. (0.9,0.1);
	\draw[->] (0.1,-0.1) .. controls (0.4,-0.4) and (0.6,-0.4) .. (0.9,-0.1);
	\draw[->] (1.1,0.1) .. controls (1.4,0.4) and (1.6,0.4) .. (1.9,0.1);
	\draw[->] (1.1,-0.1) .. controls (1.4,-0.4) and (1.6,-0.4) .. (1.9,-0.1);
	\draw[->, double, gray!50!white] (0.5,0.25)--(0.5,-0.25);
	\draw[->, double] (1.5,0.25)--(1.5,-0.25);
	\end{tikzpicture}\right\}, \hspace{10pt}
	\delta_v^{1;1} = \left\{
	\begin{tikzpicture}[baseline = -3]
	\filldraw
	(0,0) circle [radius = 1pt]
	(1,0) circle [radius = 1pt]
	(2,0) circle [radius = 1pt];
	\draw[->] (0.1,0.1) .. controls (0.4,0.4) and (0.6,0.4) .. (0.9,0.1);
	\draw[->, gray!50!white] (0.1,-0.1) .. controls (0.4,-0.4) and (0.6,-0.4) .. (0.9,-0.1);
	\draw[->] (1.1,0.1) .. controls (1.4,0.4) and (1.6,0.4) .. (1.9,0.1);
	\draw[->] (1.1,-0.1) .. controls (1.4,-0.4) and (1.6,-0.4) .. (1.9,-0.1);
	\draw[->, double, gray!50!white] (0.5,0.25)--(0.5,-0.25);
	\draw[->, double] (1.5,0.25)--(1.5,-0.25);
	\end{tikzpicture}\right\},\\
	\delta_v^{2;0} = \left\{
	\begin{tikzpicture}[baseline = -3]
	\filldraw
	(0,0) circle [radius = 1pt]
	(1,0) circle [radius = 1pt]
	(2,0) circle [radius = 1pt];
	\draw[->] (0.1,0.1) .. controls (0.4,0.4) and (0.6,0.4) .. (0.9,0.1);
	\draw[->] (0.1,-0.1) .. controls (0.4,-0.4) and (0.6,-0.4) .. (0.9,-0.1);
	\draw[->, gray!50!white] (1.1,0.1) .. controls (1.4,0.4) and (1.6,0.4) .. (1.9,0.1);
	\draw[->] (1.1,-0.1) .. controls (1.4,-0.4) and (1.6,-0.4) .. (1.9,-0.1);
	\draw[->, double] (0.5,0.25)--(0.5,-0.25);
	\draw[->, double, gray!50!white] (1.5,0.25)--(1.5,-0.25);
	\end{tikzpicture}\right\} \hspace{10pt} \text {and} \hspace{10pt}
	\delta_v^{2;1} = \left\{
	\begin{tikzpicture}[baseline = -3]
	\filldraw
	(0,0) circle [radius = 1pt]
	(1,0) circle [radius = 1pt]
	(2,0) circle [radius = 1pt];
	\draw[->] (0.1,0.1) .. controls (0.4,0.4) and (0.6,0.4) .. (0.9,0.1);
	\draw[->] (0.1,-0.1) .. controls (0.4,-0.4) and (0.6,-0.4) .. (0.9,-0.1);
	\draw[->] (1.1,0.1) .. controls (1.4,0.4) and (1.6,0.4) .. (1.9,0.1);
	\draw[->, gray!50!white] (1.1,-0.1) .. controls (1.4,-0.4) and (1.6,-0.4) .. (1.9,-0.1);
	\draw[->, double] (0.5,0.25)--(0.5,-0.25);
	\draw[->, double, gray!50!white] (1.5,0.25)--(1.5,-0.25);
	\end{tikzpicture}\right\}
	\end{gathered}
	\]
	and so it does not contain the face
	\[
	[\delta^1;\id,\id] = \left\{\begin{tikzpicture}[baseline = -3]
	\filldraw
	(0,0) circle [radius = 1pt]
	(2,0) circle [radius = 1pt];
	\draw[->, yshift = 1pt] (0.1,0.1) .. controls (0.4,0.4) and (0.6,0.4) .. (1,0) .. controls (1.4,0.4) and (1.6,0.4) .. (1.9,0.1);
	\draw[->, yshift = -1pt] (0.1,-0.1) .. controls (0.4,-0.4) and (0.6,-0.4) .. (1,0) .. controls (1.4,-0.4) and (1.6,-0.4) .. (1.9,-0.1);
	\draw[->, double] (0.5,0.25)--(0.5,-0.25);
	\end{tikzpicture}\right\}
	\]
	of codimension $2$.
	(The last face may equally well be depicted as $\left\{\begin{tikzpicture}[baseline = -3]
	\filldraw
	(0,0) circle [radius = 1pt]
	(2,0) circle [radius = 1pt];
	\draw[->, yshift = 1pt] (0.1,0.1) .. controls (0.4,0.4) and (0.6,0.4) .. (1,0) .. controls (1.4,0.4) and (1.6,0.4) .. (1.9,0.1);
	\draw[->, yshift = -1pt] (0.1,-0.1) .. controls (0.4,-0.4) and (0.6,-0.4) .. (1,0) .. controls (1.4,-0.4) and (1.6,-0.4) .. (1.9,-0.1);
	\draw[->, double] (1.5,0.25)--(1.5,-0.25);
	\end{tikzpicture}\right\}$; the position of the double arrow has no significance.)
	This differs from the more commonly found definition of a horn (\emph{e.g.}~\cite{Berger:nerve,Watson}) as ``boundary with one hyperface removed''.
	In \cref{alternative horns}, we show that for our purposes such alternative horns may be used in place of Oury's ones.
\end{remark}

\begin{definition}
The \emph{$(k;i)$-th vertical horn inclusion} $\horn_v^{k;i}\nq \incl \cell\nq$, where $0 \le k \le n$ satisfies $q_k \ge 1$ and $0 \le i \le q_k$, is
\[
\hat \square_n \left(
\begin{tikzcd}
{\partial\Delta[n]}
\arrow [d, hook]\\
{\Delta[n]}
\end{tikzcd};
\begin{tikzcd}
{\partial\Delta[q_1]}
\arrow [d, hook]\\
{\Delta[q_1]}
\end{tikzcd},
\dots,
\begin{tikzcd}
{\partial\Delta[q_{k-1}]}
\arrow [d, hook]\\
{\Delta[q_{k-1}]}
\end{tikzcd},
\begin{tikzcd}
{\horn^i[q_k]}
\arrow [d, hook]\\
{\Delta[q_k]}
\end{tikzcd},
\begin{tikzcd}
{\partial\Delta[q_{k+1}]}
\arrow [d, hook]\\
{\Delta[q_{k+1}]}
\end{tikzcd},
\dots,
\begin{tikzcd}
{\partial\Delta[q_n]}
\arrow [d, hook]\\
{\Delta[q_n]}
\end{tikzcd}
\right).
\]
It is called \emph{inner} if $1 \le i \le q_k-1$.
\end{definition}
The following proposition can be proved similarly to \cref{horizontal horn description}.
\begin{proposition}\label{vertical horn description}
The map $\horn_v^{k;i}\nq \incl \cell\nq$ is (isomorphic to) the inclusion of the cellular subset generated by all hyperfaces except for the $(k;i)$-th vertical ones.
\end{proposition}
For example, when $\nq= [2;0,2]$, $k = 2$ and $i = 1$:
\begin{itemize}
	\item $\square_2\bigl(\partial\Delta[2];\Delta[0],\Delta[2]\bigr)$ is generated by $\delta_h^0$, $\delta_h^{1;\langle!,\id\rangle}$ and $\delta_h^2$;
	\item $\square_2\bigl(\Delta[2];\partial\Delta[0],\Delta[2]\bigr)$ is generated by $\delta_h^0$ and $[\{0\}]$; and
	\item $\square_2\bigl(\Delta[2];\Delta[0],\horn^1[2]\bigr)$ is generated by $\delta_v^{2;0}$ and $\delta_v^{2;2}$.
\end{itemize}
Thus their union $\horn_v^{2;1}[2;0,2]$ is indeed generated by all hyperfaces except $\delta_v^{2;1}$.

\begin{definition}
A \emph{horizontal equivalence extension} is a map of the form
\[
\bigl(\cell[0] \overset{e}{\hookrightarrow} J\bigr) \hat \times \bigl(\partial\cell\nq \incl \cell\nq\bigr)
\]
where $\hat \times$ is the Leibniz construction of the usual binary product functor.
Here the simplicial set $J$ is regarded as a cellular set via the inclusion $\hatdelta \incl \hattheta$ described in \cref{cellular sets}.
\end{definition}

\begin{definition}
If $\nq \in \cell$ has $q_k = 0$ for some $1 \le k \le n$ then we denote by $\Psi^k\nq \incl \Phi^k\nq$ the \emph{vertical equivalence extension}
\[
\hat \square_n \left(
\begin{tikzcd}
{\partial\Delta[n]}
\arrow [d, hook]\\
{\Delta[n]}
\end{tikzcd};
\begin{tikzcd}
{\partial\Delta[q_1]}
\arrow [d, hook]\\
{\Delta[q_1]}
\end{tikzcd},
\dots,
\begin{tikzcd}
{\partial\Delta[q_{k-1}]}
\arrow [d, hook]\\
{\Delta[q_{k-1}]}
\end{tikzcd},
\begin{tikzcd}
{\Delta[0]}
\arrow [d, hook, "e"]\\
{J}
\end{tikzcd},
\begin{tikzcd}
{\partial\Delta[q_{k+1}]}
\arrow [d, hook]\\
{\Delta[q_{k+1}]}
\end{tikzcd},
\dots,
\begin{tikzcd}
{\partial\Delta[q_n]}
\arrow [d, hook]\\
{\Delta[q_n]}
\end{tikzcd}
\right).
\]
\end{definition}

\begin{definition}
	For any set $\mathcal{S}$ of morphisms in $\hattheta$, let $\celll(\mathcal{S})$ denote the closure of $\mathcal{S}$ under transfinite composition and taking pushouts along arbitrary maps.
\end{definition}

\begin{definition}
	Let $\H_h$, $\H_v$, $\E_h$, and $\E_v$ denote the sets of inner horizontal horn inclusions, inner vertical horn inclusions, horizontal equivalence extensions, and vertical equivalence extensions respectively.
	We write $\J_O$ for the union
	\[
	\J_O = \H_h \cup \H_v \cup \E_h \cup \E_v.
	\]
	By an \emph{O-anodyne extension} we mean an element $f$ of $\celll(\J_O)$, which is \emph{elementary} if $f \in \J_O$.
\end{definition}
One of Oury's main results is the following.
\begin{theorem}[{\cite[Corollary 3.11 and Theorem 4.22]{Oury}}]\label{Oury anodyne theorem}
	The O-anodyne extensions are stable under taking Leibniz products with arbitrary monomorphisms.
\end{theorem}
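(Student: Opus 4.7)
The plan is to follow a standard saturation argument. Since $\hat\times$ is cocontinuous in each variable and $\celll(\J_O)$ is closed under pushouts along arbitrary maps and transfinite composition, the class of monomorphisms $g$ for which $(-) \hat\times g$ sends $\J_O$ into $\celll(\J_O)$ is itself closed under pushouts and transfinite composition. By a small object argument (using the elegant Reedy structure on $\cell$ recalled in \cref{elegance}), the class of monomorphisms in $\hattheta$ is generated in this sense by the boundary inclusions $\partial\cell\nq \incl \cell\nq$. Hence it suffices to verify that $f \hat\times g \in \celll(\J_O)$ whenever $f \in \J_O$ is a generator and $g = \partial\cell\mp \incl \cell\mp$ is a boundary inclusion.

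The key algebraic input will be a decomposition formula that re-expresses $f \hat\times g$ as a transfinite composite of pushouts of elementary O-anodyne extensions. Every generator in $\J_O$, as well as every boundary inclusion, is itself a Leibniz $(n{+}1)$-ary box product $\hat\square_n(f_0; f_1, \ldots, f_n)$ in which at most one factor is a non-boundary map — an inner simplicial horn or the simplicial equivalence extension $e : \Delta[0] \incl J$. The target $\cell\nq \times \cell\mp$ of $f \hat\times g$ admits a filtration by its non-degenerate cells, which are controlled by the shuffle combinatorics of \cref{simplicial sets}: first a shuffle of $[n]$ and $[m]$ describing the horizontal data, then on each horizontal block a further shuffle of the relevant vertical factors. \cref{immediate predecessor of shuffle} then yields a cellular filtration whose successive attaching maps can be identified, after a bookkeeping exercise, with Leibniz box products of appropriate restrictions of the factors defining $f$ and $g$.

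Once the attaching maps have this shape, one proves they are elementary O-anodyne by invoking Joyal's classical one-variable stability: inner simplicial horn inclusions are stable under Leibniz product with simplicial monomorphisms, and similarly $e : \Delta[0] \incl J$ is stable under Leibniz product with simplicial monomorphisms in the Joyal model structure. Depending on whether the distinguished non-boundary factor of $f$ is a horizontal horn, a vertical horn, a horizontal equivalence extension, or a vertical equivalence extension, the attaching map comes out as an elementary O-anodyne extension of the same type, and hence lies in $\J_O$.

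The main obstacle is the interaction between the Cartesian product $\times$ in $\hattheta$ and the functor $\square_n$, which is cocontinuous in its first variable but only connected-cocontinuous in its remaining $n$ variables. The shuffle-indexed filtration must therefore be organised so that each successor step involves only a connected colimit in each vertical slot. This is accomplished by a double induction that peels off one horizontal shuffle at a time and, within each horizontal block, attaches the vertical shuffles one hom-category $[q_{k+1}] \times \cdots \times [q_\ell]$ at a time. Carrying out this double induction cleanly, and matching each attaching map with an element of $\J_O$ on the nose, is the main technical content of Oury's argument.
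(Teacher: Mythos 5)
First, a point of comparison: the paper does not prove this theorem at all --- it is imported wholesale from Oury's thesis (his Corollary 3.11 and Theorem 4.22) --- so there is no in-paper argument to measure your proposal against. Your opening reduction is sound: cocontinuity of the Leibniz product in each variable, together with the fact (via \cref{elegance}) that the monomorphisms of $\hattheta$ are exactly $\celll(\I)$ for $\I$ the set of boundary inclusions, does reduce the problem to showing $f \hat\times (\partial\cell\mp \incl \cell\mp) \in \celll(\J_O)$ for each generator $f \in \J_O$. One slip in the setup: the horizontal equivalence extensions $\E_h$ are not Leibniz box products but Leibniz cartesian products $e \hat\times (\partial\cell\nq \incl \cell\nq)$, so your claim that every generator is a box product with at most one non-boundary factor fails for them; on the other hand this case is immediate from associativity of $\hat\times$ (since the Leibniz product of two monomorphisms is a monomorphism, hence lies in $\celll(\I)$) and needs none of the shuffle machinery.

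The genuine gap is everything after that. The assertion that $\cell\nq \times \cell\mp$ admits a filtration whose attaching maps ``can be identified, after a bookkeeping exercise, with Leibniz box products'' and then ``come out as elementary O-anodyne extensions of the same type, and hence lie in $\J_O$'' \emph{is} the theorem, and you offer no argument for it. The appeal to Joyal's one-variable stability does not do this work: that result says the \emph{saturated} class of inner anodyne simplicial maps is closed under Leibniz products with monomorphisms; it does not say that the attaching maps in a cell-by-cell filtration of a product of representables in $\hattheta$ are elementary horns, and one should expect them to be generalised horns (boundaries with a prescribed family of hyperfaces removed, in the spirit of the subsets $\Upsilon^S\nq$ and $\horn^S\nq$ of this paper) which must themselves be shown to lie in $\celll(\J_O)$ by a further shuffle-indexed induction. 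That secondary analysis is exactly the kind of multi-claim argument occupying \cref{part 1,part 2,part 3} here for related inclusions, and it is the bulk of Oury's Chapter 4. You correctly identify the obstruction --- that $\square_n$ preserves only connected colimits in its vertical variables --- but the ``double induction'' meant to circumvent it is named rather than performed. As written, this is a plan for a proof whose decisive combinatorial content is deferred to the very source being re-proved.
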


\subsection{Vertebrae and spines}
Here we introduce the notions of \emph{vertebra} and of \emph{spine}.
The only vertebra of $\cell[0]$ is the identity map.
For $\nq \in \cell$ with $n \ge 1$:
\begin{itemize}
	\item if $1 \le k \le n$ and $q_k = 0$, then 
	\[
	[\{k-1,k\};\id] : \cell[1;0] \to \cell\nq
	\]
	is a vertebra; and
	\item if $1 \le k \le n$ and $q_k \ge 1$, then for each $1 \le i \le q_k$,
	\[
	[\{k-1,k\};\{i-1,i\}] : \cell[1;1] \to \cell\nq
	\]
	is a vertebra.
\end{itemize}
For example, $\cell[2;0,2]$ has three vertebrae
\[
\left\{\begin{tikzpicture}[baseline = -3]
\filldraw
(0,0) circle [radius = 1pt]
(1,0) circle [radius = 1pt];
\draw[gray!50!white, fill = gray!50!white]
(2,0) circle [radius = 1pt];
\draw[->] (0.1,0)--(0.9,0);
\draw[->, gray!50!white] (1.1,0.15) .. controls (1.4,0.6) and (1.6,0.6) .. (1.9,0.15);
\draw[->, gray!50!white] (1.1,-0.15) .. controls (1.4,-0.6) and (1.6,-0.6) .. (1.9,-0.15);
\draw[->, gray!50!white] (1.1,0)--(1.9,0);
\draw[->, double, gray!50!white] (1.5,-0.05)--(1.5,-0.4);
\draw[->, double, gray!50!white] (1.5,0.4)--(1.5,0.05);
\end{tikzpicture}\right\}, \hspace{5pt}
\left\{\begin{tikzpicture}[baseline = -3]
\filldraw
(2,0) circle [radius = 1pt]
(1,0) circle [radius = 1pt];
\draw[gray!50!white, fill = gray!50!white]
(0,0) circle [radius = 1pt];
\draw[->, gray!50!white] (0.1,0)--(0.9,0);
\draw[->] (1.1,0.15) .. controls (1.4,0.6) and (1.6,0.6) .. (1.9,0.15);
\draw[->, gray!50!white] (1.1,-0.15) .. controls (1.4,-0.6) and (1.6,-0.6) .. (1.9,-0.15);
\draw[->] (1.1,0)--(1.9,0);
\draw[->, double, gray!50!white] (1.5,-0.05)--(1.5,-0.4);
\draw[->, double] (1.5,0.4)--(1.5,0.05);
\end{tikzpicture}\right\}, \hspace{5pt} \text {and} \hspace{5pt}
\left\{\begin{tikzpicture}[baseline = -3]
\filldraw
(2,0) circle [radius = 1pt]
(1,0) circle [radius = 1pt];
\draw[gray!50!white, fill = gray!50!white]
(0,0) circle [radius = 1pt];
\draw[->, gray!50!white] (0.1,0)--(0.9,0);
\draw[->, gray!50!white] (1.1,0.15) .. controls (1.4,0.6) and (1.6,0.6) .. (1.9,0.15);
\draw[->] (1.1,-0.15) .. controls (1.4,-0.6) and (1.6,-0.6) .. (1.9,-0.15);
\draw[->] (1.1,0)--(1.9,0);
\draw[->, double] (1.5,-0.05)--(1.5,-0.4);
\draw[->, double, gray!50!white] (1.5,0.4)--(1.5,0.05);
\end{tikzpicture}\right\}.
\]
Let $\spine\nq \subset \cell\nq$ denote the cellular subset generated by the vertebrae of $\cell\nq$, and call it the \emph{spine} of $\cell\nq$.

If $\nq$ is $[0]$, $[1;0]$ or $[1;1]$, then $\cell\nq$ has a unique vertebra and $\spine\nq = \cell\nq$.
We will call these cells \emph{mono-vertebral}; otherwise $\nq$ is \emph{poly-vertebral}.

Note that if $[\alpha;\aalpha] : \mp \to \nq$ is inert then it restricts to a map between the spines as in
\[
\begin{tikzcd}[row sep = large]
\spine\mp \arrow [d] \arrow [r, hook, "\subset"] & \cell\mp \arrow [d, "{[\alpha;\aalpha]}"] \\
\spine\nq \arrow [r, hook, "\subset"] & \cell\nq
\end{tikzcd}
\]
and moreover this square is a pullback.

Observe that we left the map $\spine\mp \to \spine\nq$ unlabelled in the above square.
In general, we adopt the following convention.
\begin{convention}
	Whenever we draw a square of the form
	\[
	\begin{tikzcd}
	\cdot 
	\arrow [r, hook, "\subset"]
	\arrow [d] &
	\cdot
	\arrow [d, "f"] \\
	\cdot 
	\arrow [r, hook, "\subset"] &
	\cdot
	\end{tikzcd}
	\]
	the unlabelled map is assumed to be the appropriate restriction of $f$.
	Typically the square is a gluing square (defined in \cref{gluing}) and $f$ is a map of the form $\delta : \cell\mp \to X$ where $X \subset \cell\nq$, but this convention is not restricted to such situations.
\end{convention}

\subsection{Ara's model structure on $\hattheta$}\label{model structure}
In \cite{Ara:nqcat}, Ara defines a model structure on $\widehat{\Theta_n}$ whose fibrant objects (called \emph{$n$-quasi-categories}) model $(\infty,n)$-categories.
Here we recall a characterisation of this model structure, but specialise to the case $n=2$.

Recall that $e$ denotes the nerve of the inclusion $\{\lozenge\}\incl\{\lozenge\cong\blacklozenge\}$ so that its suspension $[\id;e] : \cell[1;0] \to \cell[1;J]$ is (isomorphic to) the nerve of the 2-functor
\[
\left\{
\begin{tikzpicture}[baseline = -3]
\filldraw
(0,0) circle [radius = 1pt]
(1,0) circle [radius = 1pt];
\draw[->] (0.1,0.1) .. controls (0.4,0.4) and (0.6,0.4) .. (0.9,0.1);
\end{tikzpicture}\right\}
\incl
\left\{
\begin{tikzpicture}[baseline = -3]
\filldraw
(0,0) circle [radius = 1pt]
(1,0) circle [radius = 1pt];
\draw[->] (0.1,0.1) .. controls (0.4,0.4) and (0.6,0.4) .. (0.9,0.1);
\draw[->] (0.1,-0.1) .. controls (0.4,-0.4) and (0.6,-0.4) .. (0.9,-0.1);
\node[rotate = -90] at (0.5,0) {$\cong$};
\end{tikzpicture}\right\}
\]
whose codomain is locally chaotic.
Let $\J_A$ denote the union of $\E_h$ and the closure of
\[
\bigl\{\spine\nq \incl \cell\nq:\nq \in \cell\bigr\} \cup \bigl\{[\id;e]\bigr\}
\]
under taking Leibniz products
\[
(-)\hat \times \bigl(\cell[0] \amalg \cell[0] \incl J\bigr)
\]
with the nerve of $\{\lozenge\} \amalg \{\blacklozenge\} \incl \{\lozenge \cong \blacklozenge\}$.
We will call elements of $\J_A$ \emph{elementary A-anodyne extensions}.
\begin{theorem}[{\cite[\textsection 2.10 and \textsection 5.17]{Ara:nqcat}}]\label{Ara characterisation}
	There is a model structure on $\hattheta$ characterised by the following properties:
	\begin{itemize}
		\item the cofibrations are precisely the monomorphisms; and
		\item a map $f : X \to Y$ into a fibrant cellular set $Y$ is a fibration if and only if it has the right lifting property with respect to all maps in $\J_A$.
	\end{itemize}
\end{theorem}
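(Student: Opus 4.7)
The plan is to invoke Cisinski's general machinery for constructing combinatorial model structures on presheaf toposes. Since $\hattheta$ is a presheaf category, it is a Grothendieck topos, and it admits a cellular model whose generating cofibrations are the boundary inclusions $\partial\cell\nq \incl \cell\nq$. Because $\cell$ is an elegant Reedy category (\cref{elegance}), the cellular monomorphisms here coincide with the categorical monomorphisms in $\hattheta$, so a Cisinski-style model structure on $\hattheta$ in which the cofibrations are precisely the monomorphisms is the correct target.

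The first step is to equip $\hattheta$ with an \emph{exact cylinder} in Cisinski's sense. The natural choice uses $J$: take the functor $(-) \times J$ together with the endpoint inclusion $\cell[0] \amalg \cell[0] \incl J$ as the cylinder data. Since $\hattheta$ is cartesian closed, the endpoint inclusion is a monomorphism and $(-) \times J$ preserves monomorphisms, so this genuinely defines an exact cylinder compatible with the cofibration structure.

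The second step is to apply Cisinski's theorem: given a cellular model and an exact cylinder on a Grothendieck topos, any set $S$ of monomorphisms generates a class of \emph{anodyne extensions} (its saturation under pushouts along monomorphisms, transfinite composition, and Leibniz products with $\cell[0] \amalg \cell[0] \incl J$), and this class determines a cofibrantly generated model structure in which the cofibrations are precisely the monomorphisms. Applying this to $\{\spine\nq \incl \cell\nq\} \cup \{[\id;e]\} \cup \E_h$ yields the desired model structure; the characterisation of fibrations into fibrant objects as those maps with the RLP against $\J_A$ is then immediate, because $\J_A$ is by construction the generating class of anodyne extensions (one direction uses that such maps are a retract of the generating lifts, as in the standard \emph{small object argument} plus Joyal-Tierney-style arguments for fibrations into fibrant objects).

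The main obstacle is not the abstract construction but the verification that the weak equivalences produced by this Cisinski localisation agree with Ara's intended notion of $(\infty,2)$-equivalence, equivalently, that the fibrant objects satisfy a Segal-type composition property (from the spine inclusions), a local Kan-type condition on hom-cells, and a completeness/univalence condition (from $[\id;e]$ together with $\E_h$). This is a geometric analysis of the generating maps rather than a formal consequence: one must show, for instance, that the Leibniz products in the definition of $\J_A$ suffice to invert the ``equivalences'' detected by $J$, and that no further anodyne extensions are needed to get a homotopy category with the expected universal property. This is exactly the content of Ara's \cite{Ara:nqcat}.
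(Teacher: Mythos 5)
The paper does not prove this statement at all: it is imported verbatim from Ara's \cite{Ara:nqcat}, and Ara's own proof is precisely the Cisinski-style construction you describe (the exact cylinder $(-)\times J$, the generating set consisting of the spine inclusions together with $[\id;e]$, and the resulting generated class of anodyne extensions, whose Leibniz closure under $\cell[0]\amalg\cell[0]\incl J$ together with $\E_h$ is exactly $\J_A$). Your sketch is therefore correct and follows the same route as the cited source; the one place where you oversell is calling the lifting characterisation ``immediate'' --- it rests on Cisinski's non-formal result that a map with fibrant codomain having the right lifting property against the generating anodyne set is already a fibration --- but you do name that ingredient, so there is no genuine gap.
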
 

In particular, the fibrant objects, called \emph{2-quasi-categories}, are precisely those objects with the right lifting property with respect to all elementary A-anodyne extensions.

This is the only model structure on $\hattheta$ with which we are concerned in this paper, and hence no confusion should arise in the following when we simply refer to ``trivial cofibrations'' without further qualification.

\subsection{Gluing}\label{gluing}
This paper contains only two kinds of results:
\begin{itemize}
	\item [(i)] the inclusion $\J \subset \celll(\J')$ holds for certain sets $\J$ and $\J'$ of maps in $\hattheta$; and
	\item [(ii)] a certain set $\J$ of monomorphisms (= cofibrations) in $\hattheta$ is contained in the class of trivial cofibrations.
\end{itemize}
We prove the results of the first kind by directly expressing each map in $\J$ as a transfinite composite of pushouts of maps in $\celll(\J')$.
For those of the second kind, we make use of the \emph{right cancellation property}, \emph{i.e.}~we show that $f$ and $gf$ are trivial cofibrations and then deduce that the cofibration $g$ must also be trivial.
In each case, the proof reduces to checking the existence of certain \emph{gluing squares}, as defined below.

Suppose we have a pullback square
\[
\begin{tikzcd}
W \arrow [r, hook, "\subset"] \arrow [d] \arrow [dr, phantom, "\lrcorner", very near start] & X \arrow [d, "f"] \\
Y \arrow [r, hook, "\subset"] & Z
\end{tikzcd}
\]
in $\hattheta$ such that $Z = f(X) \cup Y$, and $f$ is injective on $f^{-1}(Z \setminus Y) = X \setminus W$.
Then the square is also a pushout, and we will say $Z$ is obtained from $Y$ by \emph{gluing $X$ along $W$}.
Note that if $Y$ is generated by a set $S$ of cells in $Z$, then $W$ is generated by the pullbacks of $\begin{tikzcd} \cell\nq \arrow [r, "s"] & Z \end{tikzcd}$ along $f$ for all $s \in S$.

\section{O-anodyne extensions and Ara's model structure}\label{O-anodyne extensions suffice}
Here we show Ara's model structure on $\hattheta$, which was characterised using the spine inclusions $\spine\nq \incl \cell\nq$, can be alternatively characterised using the inner horn inclusions.
More precisely, we prove that elementary A-anodyne extensions are O-anodyne extensions, and also (elementary) O-anodyne extensions are trivial cofibrations.

\subsection{Elementary A-anodyne extensions are O-anodyne extensions}
In this subsection, we prove the following lemma.
\begin{lemma}\label{A-anodyne implies O-anodyne}
	Every map in $\J_A$ is an O-anodyne extension.
\end{lemma}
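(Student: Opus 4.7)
The plan is to show $\J_A \subseteq \celll(\J_O)$ by reducing to two basic checks, then handling each. By construction, $\J_A = \E_h \cup \J_A'$, where $\J_A'$ is the closure of the set $\{\spine\nq \incl \cell\nq : \nq \in \cell\} \cup \{[\id;e]\}$ under taking Leibniz products with the monomorphism $\cell[0] \amalg \cell[0] \incl J$. Since $\E_h \subseteq \J_O$ by definition, and since \cref{Oury anodyne theorem} guarantees that the class of O-anodyne extensions is stable under Leibniz products with arbitrary monomorphisms, it suffices to show (a) each spine inclusion $\spine\nq \incl \cell\nq$ is O-anodyne, and (b) the map $[\id;e]$ is O-anodyne.

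Claim (b) follows by inspection. Unwinding the Leibniz box product defining the vertical equivalence extension $\Psi^1[1;0] \incl \Phi^1[1;0]$, one sees that $\square_1(\partial\Delta[1]; X) \cong \cell[0] \amalg \cell[0]$ for every $X$, so the outer boundary $\partial\Delta[1] \incl \Delta[1]$ contributes no new data; the Leibniz product collapses to $\cell[1;0] \incl \cell[1;J]$, which is precisely $[\id;e]$. Hence $[\id;e] \in \E_v \subseteq \J_O$.

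For claim (a), I would induct on $\dim\nq$. Mono-vertebral cells give identity inclusions. For $\nq = [1;q]$ with $q \ge 2$, note $\spine[1;q] = \square_1(\Delta[1]; \Xi[q])$ and $\cell[1;q] = \square_1(\Delta[1]; \Delta[q])$. Since $\Xi[q] \incl \Delta[q]$ is classically inner anodyne in $\hatdelta$, and since the suspension $\square_1(\Delta[1]; -)$ preserves connected colimits and sends each inner simplicial horn $\Lambda^k[m] \incl \Delta[m]$ to the inner vertical horn $\Lambda_v^{1;k}[1;m] \incl \cell[1;m]$, the spine inclusion is O-anodyne. For $n \ge 2$, factor as
$$\spine\nq \incl \spine'\nq := \square_n(\Xi[n]; \Delta[q_1], \dots, \Delta[q_n]) \incl \cell\nq.$$
The first step is a union of pushouts of $\spine[\{k-1,k\};q_k] \incl \cell[\{k-1,k\};q_k]$, each of strictly smaller dimension (since $n \ge 2$) and hence O-anodyne by the inductive hypothesis.

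The principal obstacle is the second inclusion $\spine'\nq \incl \cell\nq$ in the case $n \ge 2$. Applying the colimit-preserving functor $\square_n(-; \Delta[q_1], \dots, \Delta[q_n])$ to a decomposition of $\Xi[n] \incl \Delta[n]$ into pushouts of inner simplicial horns $\Lambda^k[m] \incl \Delta[m]$ yields pushouts along maps that are only the \emph{cores} of the corresponding inner horizontal horns $\horn_h^k[m;\dots] \incl \cell[m;\dots]$; they miss the vertical boundary contributions encoded by the $\partial\Delta[q_?]$'s in the Leibniz construction. To bridge this gap I would adjoin the missing cells in order of decreasing dimension, introducing each via either a horn inclusion (elementary O-anodyne), a pushout of a previously verified spine inclusion at strictly smaller dimension, or a combination of the two. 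Carefully sequencing these additions, and in particular verifying that every vertical hyperface can be handled after or in concert with the horizontal horns, is the main combinatorial content of the lemma.
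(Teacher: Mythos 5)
Your reduction of the lemma via \cref{Oury anodyne theorem} to the spine inclusions and to $[\id;e]$, and your identification of $[\id;e]$ with the vertical equivalence extension $\Psi^1[1;0]\incl\Phi^1[1;0]$, match the paper exactly. Your case $n=1$ is a genuinely different and valid route: since $\square_1(\Delta[1];-)$ preserves monomorphisms and connected colimits and carries $\horn^i[q]\incl\Delta[q]$ precisely to $\horn_v^{1;i}[1;q]\incl\cell[1;q]$ (the Leibniz correction term $\square_1(\partial\Delta[1];-)\cong\cell[0]\amalg\cell[0]$ being absorbed into the suspended horn), transporting Joyal's simplicial spine lemma along the suspension puts $\spine[1;q]\incl\cell[1;q]$ into $\celll(\H_v)$ with no hand combinatorics; the paper instead reruns Joyal's argument inside $\cell[1;q]$ explicitly. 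The first factor of your $n\ge 2$ factorisation (gluing the inert faces $\cell[1;q_k]$ onto the spine) is also fine and matches the inductive use of lower-dimensional spine inclusions.

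The genuine gap is exactly where you flag ``the principal obstacle'': for $n\ge 2$ the inclusion of your $\spine'\nq$ into $\cell\nq$ is where essentially all of the content of the lemma lives, and you do not prove it. You correctly observe that pushing Joyal's decomposition of the horizontal spine through $\square_n(-;\Delta[q_1],\dots,\Delta[q_n])$ yields pushouts of $\square_n(\horn^k[m];\dots)\incl\square_n(\Delta[m];\dots)$, whose domains omit the vertical boundary pieces $\square_n(\Delta[m];\dots,\partial\Delta[q_j],\dots)$ and so are strictly smaller than $\horn_h^k$; but ``carefully sequencing these additions'' is precisely the theorem, and deferring it leaves the proof incomplete. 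Note also that your proposed order (``in order of decreasing dimension'') is backwards: to glue a cell along a horn its lower-dimensional faces must already be present, so one must proceed in increasing dimension. For comparison, the paper sidesteps your obstacle by a different intermediate object: it first adjoins the two outer horizontal faces $\delta_h^n$ and $\delta_h^0$, each via a gluing square whose attaching map is a lower-dimensional spine-type inclusion (handled by induction on $\dim\nq$); after that, the missing non-degenerate faces $[\alpha;\aalpha]$ are exactly those with $0,n\in\im\alpha$, and these are adjoined along the single family of inner horns $\horn_h^1\mp$ in increasing order of $\dim\mp$. In that scheme the vertical hyperfaces you worry about never require separate treatment, since $[\alpha;\aalpha]\cdot\delta_v^{\ell;j}$ has the same $\alpha$ and lower dimension and so is already present. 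You would need to supply an argument at this level of precision to close the gap.
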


Since the O-anodyne extensions are closed under taking Leibniz products with arbitrary monomorphisms (\cref{Oury anodyne theorem}), and $[\id;e] : \cell[1;0] \to \cell[1;J]$ is isomorphic to the elementary O-anodyne extension $\Psi^1[1;0] \incl \Phi^1[1;0]$, it suffices to show that the spine inclusions $\spine\nq \incl \cell\nq$ (which are the remaining ``generating'' elements of $\J_A$) are O-anodyne extensions.

The corresponding result for quasi-categories has been proved by Joyal \cite[Proposition 2.13]{Joyal:applications}.
Our proof presented below is essentially Joyal's proof repeated twice, first in the vertical direction and then in the horizontal direction.
In each step, we decompose the spine inclusion $\spine\nq \incl \cell\nq$ into three inclusions which, when $\nq = [3;\zzero]$, look like
\[
\left\{\begin{tikzpicture}[baseline = 8, scale = 0.8]
\filldraw
(0,0) circle [radius = 1pt]
(0.5,1) circle [radius = 1pt]
(1.5,1) circle [radius = 1pt]
(2,0) circle [radius = 1pt];
\draw[->] (0.05,0.1) -- (0.45,0.9);
\draw[->] (0.6,1) -- (1.4,1);
\draw[->] (1.55,0.9) -- (1.95,0.1);
\end{tikzpicture}\right\}
\incl
\left\{\begin{tikzpicture}[baseline = 8, scale = 0.8]
\filldraw[black!20!white]
(0,0) -- (0.5,1) -- (1.5,1) -- cycle;
\filldraw
(0,0) circle [radius = 1pt]
(0.5,1) circle [radius = 1pt]
(1.5,1) circle [radius = 1pt]
(2,0) circle [radius = 1pt];
\draw[->] (0.05,0.1) -- (0.45,0.9);
\draw[->] (0.6,1) -- (1.4,1);
\draw[->] (1.55,0.9) -- (1.95,0.1);
\draw[->] (0.09,0.06) -- (1.41,0.94);
\end{tikzpicture}\right\}
\incl
\left\{\begin{tikzpicture}[baseline = 8, scale = 0.8]
\filldraw[black!20!white]
(0,0) -- (0.5,1) -- (1.5,1) -- cycle
(2,0) -- (0.5,1) -- (1.5,1) -- cycle;
\filldraw[black!40!white]
(1,2/3) -- (0.5,1) -- (1.5,1) -- cycle;
\filldraw
(0,0) circle [radius = 1pt]
(0.5,1) circle [radius = 1pt]
(1.5,1) circle [radius = 1pt]
(2,0) circle [radius = 1pt];
\draw[->] (0.05,0.1) -- (0.45,0.9);
\draw[->] (0.6,1) -- (1.4,1);
\draw[->] (1.55,0.9) -- (1.95,0.1);
\draw[->] (0.09,0.06) -- (1.41,0.94);
\draw[->] (0.59,0.94) -- (1.91,0.06);
\end{tikzpicture}\right\}
\incl
\left\{\begin{tikzpicture}[baseline = 8, scale = 0.8]
\filldraw[black!50!white]
(0,0) -- (0.5,1) -- (1.5,1) -- (2,0) -- cycle;
\filldraw
(0,0) circle [radius = 1pt]
(0.5,1) circle [radius = 1pt]
(1.5,1) circle [radius = 1pt]
(2,0) circle [radius = 1pt];
\draw[->] (0.05,0.1) -- (0.45,0.9);
\draw[->] (0.6,1) -- (1.4,1);
\draw[->] (1.55,0.9) -- (1.95,0.1);
\draw[->] (0.09,0.06) -- (1.41,0.94);
\draw[->] (0.59,0.94) -- (1.91,0.06);
\draw[->] (0.1,0) -- (1.9,0);
\end{tikzpicture}\right\}.
\]
In general, the first two maps glue the outer faces along lower dimensional spine(-like) inclusions.
The remaining non-degenerate cells are precisely those containing both of the ``endpoints'' (\emph{i.e.}~$0,q_k \in[q_k]$ in the vertical case and $0,n \in [n]$ in the horizontal case).
We can group such cells into pairs $\{x,y\}$ so that the only difference between $x$ and $y$ is whether they contain $1$ (meaning $1 \in [q_k]$ in the vertical case and $1 \in [n]$ in the horizontal case).
Such a pair necessarily satisfies $y = x\cdot\delta^1$ (up to interchanging $x$ and $y$), \emph{e.g.}
\[
\left\{\begin{tikzpicture}[baseline = 8, scale = 0.8]
\filldraw[black!20!white]
(0,0) -- (0.5,1) -- (2,0) -- cycle;
\filldraw
(0,0) circle [radius = 1pt]
(0.5,1) circle [radius = 1pt]
(2,0) circle [radius = 1pt];
\draw[->] (0.05,0.1) -- (0.45,0.9);
\draw[->] (0.59,0.94) -- (1.91,0.06);
\draw[->] (0.1,0) -- (1.9,0);
\draw[->, white] (0.6,1) -- (1.4,1);
\end{tikzpicture}\right\}
\overset{\delta^1}{\longmapsto}
\left\{\begin{tikzpicture}[baseline = 8, scale = 0.8]
\filldraw
(0,0) circle [radius = 1pt]
(2,0) circle [radius = 1pt];
\draw[->] (0.1,0) -- (1.9,0);
\draw[->, white] (0.6,1) -- (1.4,1);
\end{tikzpicture}\right\}
\hspace{10pt} \text {and} \hspace{10pt}
\left\{\begin{tikzpicture}[baseline = 8, scale = 0.8]
\filldraw[black!50!white]
(0,0) -- (0.5,1) -- (1.5,1) -- (2,0) -- cycle;
\filldraw
(0,0) circle [radius = 1pt]
(0.5,1) circle [radius = 1pt]
(1.5,1) circle [radius = 1pt]
(2,0) circle [radius = 1pt];
\draw[->] (0.05,0.1) -- (0.45,0.9);
\draw[->] (0.6,1) -- (1.4,1);
\draw[->] (1.55,0.9) -- (1.95,0.1);
\draw[->] (0.09,0.06) -- (1.41,0.94);
\draw[->] (0.59,0.94) -- (1.91,0.06);
\draw[->] (0.1,0) -- (1.9,0);
\end{tikzpicture}\right\}
\overset{\delta^1}{\longmapsto}
\left\{\begin{tikzpicture}[baseline = 8, scale = 0.8]
\filldraw[black!20!white]
(0,0) -- (1.5,1) -- (2,0) -- cycle;
\filldraw
(0,0) circle [radius = 1pt]
(1.5,1) circle [radius = 1pt]
(2,0) circle [radius = 1pt];
\draw[->] (1.55,0.9) -- (1.95,0.1);
\draw[->] (0.09,0.06) -- (1.41,0.94);
\draw[->] (0.1,0) -- (1.9,0);
\draw[->, white] (0.6,1) -- (1.4,1);
\end{tikzpicture}\right\}.
\]
Thus the last inclusion can be obtained by gluing the $x$'s along $\horn^1$.

\begin{definition}
	If $S$ is any set of faces of $\cell\nq$, we will write $\spine^S\nq \subset \cell\nq$ for the cellular subset generated by $\spine\nq$ and $S$.
\end{definition}

\begin{proof}[Proof of \cref{A-anodyne implies O-anodyne}]
Recall that $\spine\nq \incl \cell\nq$ for mono-vertebral $\nq$ (\emph{i.e.}~for $\nq = [0]$, $[1;0]$ or $[1;1]$) is the identity and hence trivially O-anodyne.
These serve as the base cases for our induction.

We first consider the case where $n=1$.
For any $q \ge 1$, let $\spine^\dagger[1;q] = \spine^{\{\delta_v^{1;q}\}}[1;q]$ and let $\spine^\ddagger[1;q] = \spine^{\{\delta_v^{1;0}, \delta_v^{1;q}\}}[1;q]$.
We prove by induction on $q$ that each of the inclusions
\[
\spine[1;q] \incl \spine^\dagger[1;q] \incl \spine^\ddagger[1;q] \incl \cell[1;q]
\]
is an O-anodyne extension.

Assuming $q \ge 2$, the first inclusion fits into the gluing square
\[
\begin{tikzcd}[row sep = large]
\spine[1;q-1] \arrow [r, hook, "\subset"] \arrow [d] \glue & \cell[1;q-1] \arrow [d, "\delta_v^{1;q}"]\\
\spine[1;q] \arrow [r, hook, "\subset"] & \spine^\dagger[1;q]
\end{tikzcd}
\]
where the upper horizontal map is O-anodyne by the inductive hypothesis.
Similarly, the second inclusion fits into the following gluing square:
\[
\begin{tikzcd}[row sep = large]
\spine^\dagger[1;q-1] \arrow [r, hook, "\subset"] \arrow [d] \glue & \cell[1;q-1] \arrow [d, "\delta_v^{1;0}"]\\
\spine^\dagger[1;q] \arrow [r, hook, "\subset"] & \spine^\ddagger[1;q]
\end{tikzcd}
\]

Then a face map $[\id;\alpha] : [1;p] \to [1;q]$ corresponds to a cell in $\cell[1;q] \setminus \spine^\ddagger[1;q]$ if and only if $0, q \in \im\alpha$.
Thus the last inclusion can be obtained by gluing the faces corresponding to those $\alpha$ with $0,1,q \in \im\alpha$ along $\horn^{1;1}_v[1;p]$ in increasing order of $p$.
This completes the proof for the special case $n=1$.

Now consider the general case.
For any $\nq \in \cell$, let $\spine'\nq = \spine^{\{\delta_h^n\}}\nq$ and let $\spine''\nq = \spine^{\{\delta_h^0, \delta_h^n\}}\nq$.
We prove by induction on $\dim\nq$ that each of the inclusions
\[
\spine\nq \incl \spine'\nq \incl \spine''\nq \incl \cell\nq
\]
is an O-anodyne extension.

If $n=1$ then the first two inclusions are the identity and the last inclusion was treated above.
So we may assume $n \ge 2$, in which case the first inclusion fits into the gluing square
\[
\begin{tikzcd}[row sep = large]
\spine\nqd \arrow [r, hook, "\subset"] \arrow [d] \glue & \cell\nqd \arrow [d, "\delta_h^n"]\\
\spine\nq \arrow [r, hook, "\subset"] & \spine'\nq
\end{tikzcd}
\]
where  $\mathbf{q'} = (q_1,\dots,q_{n-1})$.
The upper horizontal map is O-anodyne by the inductive hypothesis, and so the lower map is also O-anodyne.
Similarly, the second inclusion fits into the gluing square
\[
\begin{tikzcd}[row sep = large]
\spine'\nqdd \arrow [r, hook, "\subset"] \arrow [d] \glue & \cell\nqdd \arrow [d, "\delta_h^0"]\\
\spine'\nq \arrow [r, hook, "\subset"] & \spine''\nq
\end{tikzcd}
\]
where $\mathbf{q''} = (q_2,\dots,q_n)$.

Then a face map $[\alpha;\aalpha] : \mp \to \nq$ corresponds to a cell in $\cell\nq \setminus \spine''\nq$ if and only if $0,n \in \im\alpha$.
Thus the last inclusion $\spine''\nq \incl \cell\nq$ can be obtained by gluing the faces corresponding to those $[\alpha;\aalpha]$ with $0,1,n \in \im\alpha$ along $\horn^1_h\mp$ in increasing order of $\dim\mp$.
This completes the proof for the general case.
\end{proof}

\subsection{Oury's inner horn inclusions are trivial cofibrations}\label{horn section}
The aim of this subsection is to prove the following lemma.
\begin{lemma}\label{horn inclusions are trivial cofibrations}
	Every map in $\H_h \cup \H_v$ is a trivial cofibration.
\end{lemma}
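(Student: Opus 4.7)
The plan is to use the right cancellation property for trivial cofibrations. For any inner horn inclusion $g : \horn\nq \hookrightarrow \cell\nq$, the spine inclusion $\spine\nq \hookrightarrow \cell\nq$ lies in $\J_A$ and is therefore a trivial cofibration. Since every vertebra of $\cell\nq$ lies in the horn (a direct check against the explicit list of vertebrae and the definition of inner horn), the spine inclusion factors as $\spine\nq \hookrightarrow \horn\nq \hookrightarrow \cell\nq$. If the first factor $f : \spine\nq \hookrightarrow \horn\nq$ is itself a trivial cofibration, then the two-out-of-three property applied to the pair $(f, gf)$ forces the cofibration $g$ to be a weak equivalence, hence a trivial cofibration.

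To prove that $f : \spine\nq \hookrightarrow \horn\nq$ is a trivial cofibration, I would induct on $\dim\nq$, handling the vertical horns $\horn_v^{k;i}\nq$ and the horizontal horns $\horn_h^k\nq$ separately. In each case the strategy is to filter $\horn\nq$ by attaching its non-spine cells to $\spine\nq$ in an order indexed by dimension. At each attaching step, the newly added cell $\cell\mp$ meets the already-built cellular subset in some $Y \subseteq \cell\mp$, and the crux is to order the attachments so that $Y \hookrightarrow \cell\mp$ is either a spine inclusion (which lies in $\J_A$) or a strictly lower-dimensional horn inclusion (a trivial cofibration by the outer inductive hypothesis). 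Since trivial cofibrations are closed under pushout and transfinite composition, this would exhibit $f$ as one.

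The vertical case should be relatively clean: the non-spine cells of $\horn_v^{k;i}\nq$ are essentially face maps avoiding the $(k;i)$-vertex, and a plain dimension-ordered attachment suffices, with each intersection being precisely the spine of $\cell\mp$. The horizontal case is where I expect the real difficulty to lie. As the example $\horn_h^1[2;1,1]$ in the preceding remark shows, horizontal horns may omit cells of codimension strictly greater than one, so naively gluing a face of $\cell\nq$ onto the partial horn would introduce cells that should not be present. I expect that a secondary ordering governed by the shuffle poset on $\shuffle(q_k, q_{k+1})$—which indexes the $k$-th horizontal hyperfaces—together with careful bookkeeping of which sub-faces of an attached cell correspond to missing $k$-th horizontal faces of $\cell\nq$ versus which lie in the horn, is needed to make the induction close. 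The combinatorial cost of managing these intersections is what makes this subsection, as the introduction forewarns, the most technical in the paper.
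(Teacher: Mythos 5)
Your top-level strategy — factor $\spine\nq \incl \horn\nq \incl \cell\nq$, note that the spine lands in the horn because the vertebrae are outer faces, and invoke right cancellation — is exactly the paper's (and is the Joyal–Tierney strategy the paper cites). The gap is in the filtration of $\spine\nq \incl \horn\nq$. Attaching the non-spine cells of the horn one at a time in increasing order of dimension cannot work: by the time you attach a cell $\cell\mp$, every proper face of it is already present (each lies in the horn and has strictly lower dimension), so the intersection $Y$ is the full boundary $\partial\cell\mp$, and a boundary inclusion is not a trivial cofibration. The correct move is to attach entire \emph{hyperfaces} of $\cell\nq$ (each gluing brings in a whole representable at once), but then the intersection of a newly attached hyperface $\cell\mp$ with the partially built object is generated by $\spine\mp$ together with \emph{several} hyperfaces of $\cell\mp$ — namely all those it shares with previously attached hyperfaces of $\cell\nq$. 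Such an intersection is in general neither a spine inclusion nor a lower-dimensional Oury horn inclusion (which omits exactly one inner hyperface, or one family of them), so neither of your two allowed intersection types covers what actually occurs. This already breaks the vertical case you describe as ``clean'': for instance, when one glues the outer hyperface $\delta_v^{k;0}$ after $\delta_v^{1;0},\dots,\delta_v^{k-1;0}$, the intersection contains the pulled-back hyperfaces $\delta_v^{\ell;0}$ for all $\ell<k$, not just the spine.

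The missing idea is therefore a \emph{larger class of intermediate objects} that is closed under the intersections arising in the induction: the paper introduces $\spine^S\nq$ (spine plus a set $S$ of outer hyperfaces, with $S$ downward closed for a specific total order $\prec$) and $\Upsilon^S\nq$ (all outer hyperfaces plus an ``admissible'' set $S$ of inner hyperfaces), proves $\spine^S\nq\incl\cell\nq$ and $\Upsilon^S\nq\incl\cell\nq$ are trivial cofibrations by induction on $|S|$, and recovers the horns as the special cases where $S$ is all inner hyperfaces but one (or but one family of horizontal ones). The whole combinatorial content of the proof is verifying that each intersection is again of the form $\spine^T\mp$ or $\Upsilon^T\mp$ with $T$ downward closed, respectively admissible, and smaller — and the admissibility condition for horizontal hyperfaces is indeed governed by downward-closed subsets of the shuffle poset, so your instinct about where the difficulty concentrates is right, but the proposal as written does not supply the objects or the orderings needed to make any of the induction close.
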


In fact, we will prove a wider class of ``generalised inner horn inclusions'' is contained in the trivial cofibrations.
These horns are constructed from the spines by filling lower dimensional horns.
Then the right cancellation property applied to $\spine\nq \incl \horn\nq \incl \cell\nq$ implies the second factor is a trivial cofibration.
This general strategy is the same as that adopted by Joyal and Tierney to prove the corresponding result for quasi-categories \cite[Lemma 3.5]{JT} although the combinatorics here is much more involved.

We start by gluing the outer hyperfaces of $\cell\nq$ to $\spine\nq$ according to the following total order $\prec$:
\[
\delta_v^{1;0} \prec \delta_v^{2;0} \prec \dots \prec \delta_v^{n;0} \prec \delta_h^0 \prec \delta_h^n \prec \delta_v^{1;q_1} \prec \delta_v^{2;q_2} \prec \dots \prec \delta_v^{n;q_n}.
\]
(Note that not all of these hyperfaces may exist.
The face $\delta_h^0$ (respectively $\delta_h^n$) is a hyperface only if $q_0 = 0$ (resp.~if $q_n = 0$), and the hyperfaces $\delta_v^{k;0}$ and $\delta_v^{k;q_k}$ exist only if $q_k \ge 1$.)

\begin{lemma}\label{part 1}
	The inclusion $\spine^S\nq \incl \cell\nq$ is a trivial cofibration for any $\nq \in \cell$ and for any set $S$ of outer hyperfaces of $\cell\nq$ that is downward closed with respect to $\prec$.
\end{lemma}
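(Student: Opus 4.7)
The plan is to use right cancellation. Since the spine inclusion $\spine\nq \incl \cell\nq$ lies in $\J_A$ and is therefore a trivial cofibration, and $\spine^S\nq \incl \cell\nq$ is a cofibration as a monomorphism, it suffices to prove that $\spine\nq \incl \spine^S\nq$ is a trivial cofibration. I will establish this by a double induction: an outer induction on $\dim\nq$ and an inner induction on $|S|$.

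The outer base case occurs when $\nq$ is mono-vertebral, for which $\spine\nq = \cell\nq$ and the statement is vacuous. For the outer inductive step, assume $\nq$ is poly-vertebral and the lemma holds for all cells of strictly smaller dimension. The inner base case $S = \emp$ is immediate. When $S$ is non-empty, let $\delta$ denote its $\prec$-maximal element and set $S' := S \setminus \{\delta\}$; the downward-closedness of $S$ transfers to $S'$, so by the inner inductive hypothesis $\spine\nq \incl \spine^{S'}\nq$ is already a trivial cofibration. It therefore suffices to show that $\spine^{S'}\nq \incl \spine^{S}\nq$ is a trivial cofibration. For this I will form the gluing square
\begin{displaymath}
\begin{tikzcd}[row sep = large]
W \arrow[r, hook, "\subset"] \arrow[d] \glue & \cell\mp \arrow[d, "\delta"] \\
\spine^{S'}\nq \arrow[r, hook, "\subset"] & \spine^{S}\nq
\end{tikzcd}
\end{displaymath}
in which $\mp$ is the (lower-dimensional) domain of $\delta$, so $\dim\mp = \dim\nq - 1$, and $W$ is the pullback. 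Since trivial cofibrations are closed under pushout, it will then suffice to check that $W \incl \cell\mp$ is a trivial cofibration.

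The heart of the argument is to identify $W$ as $\spine^T\mp$ for an appropriate downward-closed set $T$ of outer hyperfaces of $\cell\mp$; the outer inductive hypothesis on dimension then finishes the step. This identification is the main technical obstacle and will be verified case by case according to which of the four types $\delta$ belongs to, namely $\delta_v^{k;0}$, $\delta_v^{k;q_k}$, $\delta_h^0$, or $\delta_h^n$. In each case one checks that pulling back any vertebra of $\cell\nq$ along $\delta$ yields either a vertebra of $\cell\mp$ or the empty cellular set, giving $\delta^{-1}(\spine\nq) = \spine\mp$, and that pulling back each outer hyperface in $S'$ along $\delta$ yields an outer hyperface of $\cell\mp$. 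The total order $\prec$ has been set up precisely so that these pulled-back hyperfaces form a downward-closed subset $T$ with respect to the corresponding order on the outer hyperfaces of $\cell\mp$, completing the induction.
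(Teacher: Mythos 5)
Your overall architecture is the same as the paper's: peel off the $\prec$-maximal element $\delta$ of $S$, form the gluing square along $\delta$, identify the pullback $W$ as $\spine^T$ of the domain of $\delta$ for a smaller downward-closed set $T$ of outer hyperfaces, and conclude by induction plus right cancellation against the spine inclusions. (The paper inducts on $|S|$ alone, using $|T|<|S|$; your double induction on $(\dim\nq,|S|)$ is an equally valid bookkeeping.) However, there is a genuine gap in the step you describe as the case-by-case verification.

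The claim that ``pulling back each outer hyperface in $S'$ along $\delta$ yields an outer hyperface of $\cell\mp$'' is false, and the cases where it fails are unavoidable. Take $\delta=\delta_v^{k;q_k}$ with $q_k=1$, so that the domain $\np$ has $p_k=0$. Downward closure of $S$ forces $\delta_v^{k;0}\in S'$, and the pullback of $\delta_v^{k;0}$ along $\delta_v^{k;1}$ is the intersection of the two codimension-one ``semicircles'' in the $k$-th hom, which is not a hyperface of $\cell\np$: for $1<k<n$ it is the union of the two faces $[\{0,\dots,k-1\};\iid]$ and $[\{k,\dots,n\};\iid]$ of codimension greater than $1$, and for $k=1$ (dually $k=n$) it is generated by the horizontal hyperface $\delta_h^0\np$ together with an extra vertex. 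The identification $W=\spine^T\np$ is still true, but establishing it requires showing that these degenerate pullbacks are contained in the cellular subset generated by the spine and the well-behaved pulled-back hyperfaces --- e.g.\ $[\{0,\dots,k-1\};\iid]$ lies in $\delta_h^n\np$ if $p_n=0$ and in $\delta_v^{n;0}\np$ if $p_n\ge 1$, and one must check that whichever of these is a hyperface really does occur among the pullbacks of $S'$ (which again uses downward closure). This containment analysis is the actual content of the hardest cases and is absent from your sketch. A milder imprecision of the same kind: the pullback of an individual vertebra along $\delta$ need not be a vertebra or empty (it can be a single vertex, e.g.\ pulling back $[\{k-1,k\};\{0,1\}]$ along $\delta_v^{k;0}$); what you need, and what is true because outer hyperfaces are inert, is that the pullback of the whole spine is the spine.
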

\begin{proof}
We proceed by induction on $|S|$.
Fix $\nq \in \cell$ and a downward closed set $S$ of outer hyperfaces of $\cell\nq$.
If $S$ is empty then $\spine^S\nq = \spine\nq$ and so the result follows trivially.
So suppose $|S| \ge 1$.
Let $\delta:\cell\mp \to \cell\nq$ be the $\prec$-maximum element in $S$ and let $S' = S \setminus \{\delta\}$.
Then $\spine^{S'}\nq \incl \cell\nq$ is a trivial cofibration by the inductive hypothesis, and hence it suffices to show $\spine^{S'}\nq \incl \spine^S\nq$ is also a trivial cofibration.
Since $\spine^S\nq$ can be obtained by gluing $\cell\mp$ to $\spine^{S'}\nq$ along the pullback $X$ in the gluing square
\[
\begin{tikzcd}[row sep = large]
X \arrow [r, hook, "\subset"] \arrow [d] \glue & \cell{\mp} \arrow [d, "{\delta}"] \\
\spine^{S'}{\nq} \arrow [r, hook, "\subset"] & \spine^S{\nq}
\end{tikzcd}
\]
this reduces to showing we have $X = \spine^T\mp$ for some downward closed set $T$ of outer hyperfaces of $\mp$ with $|T| < |S|$.
Since $\delta$ is an outer hyperface and hence inert, pulling back $\spine\nq$ along $\delta$ yields $\spine\mp$.
To describe the remaining cells in $X$, we have to consider the following cases separately.
\begin{itemize}
	\item[(1)] $\delta = \delta_v^{k;0}$:
	In this case $S' = \{\delta_v^{\ell;0}: \ell < k,~q_\ell \ge 1\}$.
	Thus $X$ is generated by $\spine\np$ (where $\pp = (q_1,\dots,q_k-1,\dots,q_n)$) and the pullbacks of these faces $\delta_v^{\ell;0} \in S'$ along $\delta_v^{k;0}$.
	For any $\ell < k$ with $q_\ell \ge 1$, the pullback of $\delta_v^{\ell;0}$ along $\delta_v^{k;0}$ is $\delta_v^{\ell;0}[n;\pp]$, \emph{i.e.}~the square
	\[
	\begin{tikzcd}[row sep = large, column sep = small]
	{\cell[n;q_1,\dots,q_\ell-1,\dots,q_k-1,\dots,q_n]}
	\arrow [r, "\delta_v^{\ell;0}"]
	\arrow [d, "\delta_v^{k;0}", swap]
	\pullback &
	{\cell[n;q_1,\dots,q_k-1,\dots,q_n]}
	\arrow [d, "\delta_v^{k;0}"] \\
	{\cell[n;q_1,\dots,q_\ell-1,\dots,q_n]}
	\arrow [r, "\delta_v^{\ell;0}", swap] &
	{\spine^S\nq}
	\end{tikzcd}
	\]
	is a pullback.
	Hence $X = \spine^T\np$ where
	\[
	T  = \bigl\{\delta_v^{\ell;0}\np: \ell < k,~q_\ell \ge 1\bigr\} = \bigl\{\delta_v^{\ell;0}\np: \ell < k,~p_\ell \ge 1\bigr\}.
	\]
	
	\item [(2)] $\delta = \delta_h^0$:
	In this case $S' = \{\delta_v^{k;0} : q_k \ge 1\}$.
	Note that since $\delta_h^0$ is a hyperface, we must have $q_1 = 0$ and hence $k \neq 1$ for all $\delta_v^{k;0} \in S'$.
	It then follows that the pullback of $\delta_v^{k;0}$ along $\delta_h^0$ is $\delta_v^{k-1;0}[n-1;\pp]$ where $\pp = (q_2,\dots,q_n)$, \emph{i.e.}~the square
	\[
	\begin{tikzcd}[row sep = large, column sep = small]
	{\cell[n-1;q_2,\dots,q_k-1,\dots,q_n]}
	\arrow [r, "\delta_v^{k-1;0}"]
	\arrow [d, "\delta_h^0", swap]
	\pullback &
	{\cell[n-1;q_2,\dots,q_n]}
	\arrow [d, "\delta_h^0"]\\
	{\cell[n;q_1,\dots,q_k-1,\dots,q_n]}
	\arrow [r, "\delta_v^{k;0}", swap] &
	{\spine^S\nq}
	\end{tikzcd}
	\]
	is a pullback.
	Therefore $X = \spine^T[n-1;\pp]$  and
	\[
	T = \bigl\{\delta_v^{k-1;0}[n-1;\pp] : q_k \ge 1\bigr\} = \bigl\{\delta_v^{k;0}[n-1;\pp] : p_k \ge 1\bigr\}.
	\]
	(The second equality holds because $p_{k-1} = q_k$.)
	
	\item [(3)]
	$\delta = \delta_h^n$:
	This case can be treated similarly to the previous one except we may have $\delta_h^0 \in S'$.
	If this is the case, the pullback of $\delta_h^0$ along $\delta_h^n$ is $\delta_h^0[n-1;\pp]$ where $\pp = (q_1,\dots,q_{n-1})$, hence $X = \spine^T[n-1;\pp]$ where
	\[
	T = \bigl\{\delta_v^{k;0}[n-1;\pp] : p_k \ge 1\bigr\} \cup \bigl\{\delta_h^0[n-1;\pp]\bigr\}.
	\]
	Since $p_1 = q_1 = 0$ (where the second equality follows from our assumption that $\delta_h^0 \in S'$), $\delta_h^0[n-1;\pp]$ is indeed a hyperface of $\cell[n-1;\pp]$.
	
	\item[(4a)] $\delta = \delta_v^{k;q_k}$ and $q_k \ge 2$:
	The pullback of $\delta_v^{\ell;0}$ along $\delta_v^{k;q_k}$ is $\delta_v^{\ell;0}\np$ (where $\pp = (q_1,\dots,q_k-1,\dots,q_n)$) for all $\ell$, and similarly for $\delta_v^{\ell;q_\ell}$.
	If $q_1 = 0$, then we know $k \neq 1$ and the pullback of $\delta_h^0 \in S'$ along $\delta_v^{k;q_k}$ is $\delta_h^0\np$. 
	Note in this case $\delta_h^0\np$ is a hyperface of $\cell\np$ since $p_1 = q_1 = 0$.
	Conversely, if $p_1 = 0$ then we must have $q_1 = 0$ and so $\delta_h^0 \in S'$.
	Similarly, $p_n = 0$ if and only if $q_n = 0$, in which case the pullback of $\delta_h^n \in S'$ along $\delta_v^{k;q_k}$ is the hyperface $\delta_h^n\np$.
	Therefore $X = \spine^T\np$ where:
	\begin{itemize}
		\item $\delta_v^{\ell;0}\np \in T$ iff $p_\ell \ge 1$;
		\item $\delta_v^{\ell;q_\ell}\np = \delta_v^{\ell;p_\ell}\np \in T$ iff $\ell < k$ and $p_\ell \ge 1$;
		\item $\delta_h^0\np \in T$ iff $p_1 = 0$; and
		\item $\delta_h^n\np \in T$ iff $p_n = 0$.
	\end{itemize}
	
	\item [(4b)] $\delta = \delta_v^{1;q_1}$ and $q_1 = 1$:
	The difference between this case and the previous one is that the pullback of $\delta_v^{1;0}$ along $\delta_v^{1;q_1} = \delta_v^{1;1}$ is generated by the horizontal hyperface $\delta_h^0\np$ of $\cell\np = \cell[n;0,q_2,\dots,q_n]$ and the point $[\{0\}]$.
	(This is essentially the intersection of two semicircles
	\[
	\left\{\begin{tikzpicture}[baseline = -3]
	\filldraw
	(0,0) circle [radius = 1pt]
	(1,0) circle [radius = 1pt];
	\draw[->] (0.1,0.1) .. controls (0.4,0.4) and (0.6,0.4) .. (0.9,0.1);
	\draw[->, gray!50!white] (0.1,-0.1) .. controls (0.4,-0.4) and (0.6,-0.4) .. (0.9,-0.1);
	\draw[->, double, gray!50!white] (0.5,0.25)--(0.5,-0.25);
	\end{tikzpicture}\right\}
	\cap
	\left\{\begin{tikzpicture}[baseline = -3]
	\filldraw
	(0,0) circle [radius = 1pt]
	(1,0) circle [radius = 1pt];
	\draw[->, gray!50!white] (0.1,0.1) .. controls (0.4,0.4) and (0.6,0.4) .. (0.9,0.1);
	\draw[->] (0.1,-0.1) .. controls (0.4,-0.4) and (0.6,-0.4) .. (0.9,-0.1);
	\draw[->, double, gray!50!white] (0.5,0.25)--(0.5,-0.25);
	\end{tikzpicture}\right\}
	=
	\left\{\begin{tikzpicture}[baseline = -3]
	\filldraw
	(0,0) circle [radius = 1pt]
	(1,0) circle [radius = 1pt];
	\draw[->, gray!50!white] (0.1,0.1) .. controls (0.4,0.4) and (0.6,0.4) .. (0.9,0.1);
	\draw[->, gray!50!white] (0.1,-0.1) .. controls (0.4,-0.4) and (0.6,-0.4) .. (0.9,-0.1);
	\draw[->, double, gray!50!white] (0.5,0.25)--(0.5,-0.25);
	\end{tikzpicture}\right\}
	\]
	horizontally composed with $[n-1;q_2,\dots,q_n]$.)
	Hence $X = \spine^T\np$ where:
	\begin{itemize}
		\item $\delta_v^{\ell;0}\np \in T$ iff $p_\ell \ge 1$;
		\item $\delta_h^0\np \in T$; and
		\item $\delta_h^n\np \in T$ iff $p_n = 0$.
	\end{itemize}
	
	\item [(4c)] $\delta = \delta_v^{n;q_n}$ and $q_n = 1$:
	This case is similar to the previous one, and we can deduce $X = \spine^T\np$ where $\pp = (q_1,\dots,q_{n-1},0)$ and:
	\begin{itemize}
		\item $\delta_v^{\ell;0}\np \in T$ iff $p_\ell \ge 1$;
		\item $\delta_v^{\ell;q_\ell}\np = \delta_v^{\ell;p_\ell}\np \in T$ iff $\ell < n$ and $p_\ell \ge 1$;
		\item $\delta_h^0\np \in T$ iff $p_1 = 0$; and
		\item $\delta_h^n\np \in T$.
	\end{itemize}
	
	\item [(4d)] $\delta = \delta_v^{k;q_k}$ for some $2 \le k \le n-1$ and $q_k = 1$:
	In this case, we have $\pp = (q_1,\dots,q_k-1,\dots,q_n)$ and the pullback of $\delta_v^{k;0}$ along $\delta_v^{k;q_k}$ is generated by
	\[
	[\{0,\dots,k-1\};\iid] : \cell[k-1;p_1,\dots,p_{k-1}] \to \cell\np
	\]
	and
	\[
	[\{k,\dots,n\};\iid] : \cell[n-k;p_{k+1},\dots,p_n] \to \cell\np.
	\]
	Observe that $[\{0,\dots,k-1\};\iid]$ is contained in the hyperface $\delta_h^n\np$ if $p_n = 0$, and in the hyperface $\delta_v^{n;0}\np$ if $p_n \ge 1$.
	Similarly, $[\{k,\dots,n\};\iid]$ is contained in $\delta_h^0\np$ or $\delta_v^{1;0}\np$.
	Therefore $X = \spine^T\np$ where:
	\begin{itemize}
		\item $\delta_v^{\ell;0}\np \in T$ iff $\ell \neq k$ and $p_\ell \ge 1$;
		\item $\delta_v^{\ell;q_\ell}\np = \delta_v^{\ell;p_\ell}\np \in T$ iff $\ell < k$ and $p_\ell \ge 1$;
		\item $\delta_h^0\np \in T$ iff $p_1 = 0$; and
		\item $\delta_h^n\np \in T$ iff $p_n = 0$.
	\end{itemize}
\end{itemize}
In each of these cases, it is straightforward to check that $T$ is a downward closed set of outer hyperfaces of $\cell\mp$.
Moreover, since the elements of $T$ are obtained by pulling back the elements in $S'$, we have $|T| \le |S'| < |S|$.
This completes the proof of \cref{part 1}.
\end{proof}

We are particularly interested in the instance of \cref{part 1} where $S$ is the set of all outer hyperfaces of $\cell\nq$.
Note that if $\nq$ is poly-vertebral (\emph{i.e.}~$\nq$ is not $[0]$, $[1;0]$ or $[1;1]$) then each vertebra of $\nq$ is an outer face.
Thus in this case it follows from \cref{inert factors through outer} that $\spine^S\nq$ is generated by the outer hyperfaces of $\cell\nq$ alone.
This is why the following definition does not mention the spine.

\begin{definition}\label{Upsilon}
	For any set $S$ of faces of $\cell\nq$, let $\Upsilon^S\nq \subset \cell\nq$ denote the cellular subset generated by all outer hyperfaces of $\cell\nq$ and the faces in $S$.
\end{definition}

We first consider the case where $S$ is some set of inner vertical hyperfaces.

\begin{definition}\label{admissible 1}
	A set $S$ of inner vertical hyperfaces of $\cell\nq$ is called \emph{admissible} if it is not the set of all inner hyperfaces.
\end{definition}
Note that if $S$ is a non-admissible set of inner vertical hyperfaces of $\cell\nq$, then all inner hyperfaces of $\cell\nq$ must be vertical.
Therefore we must have $n = 1$ and $S = \{\delta_v^{1;k} : 1 \le k \le q_1-1\}$.

\begin{lemma}\label{part 2}
	The inclusion $\Upsilon^S\nq \incl \cell\nq$ is a trivial cofibration for any poly-vertebral $\nq \in \cell$ and for any admissible set $S$ of inner vertical hyperfaces of $\cell\nq$.
\end{lemma}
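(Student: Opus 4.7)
The strategy, indicated in the paragraph preceding the lemma, follows the Joyal--Tierney approach for simplicial inner horns. Since $\nq$ is poly-vertebral, \cref{inert factors through outer} ensures every vertebra of $\cell\nq$ factors through an outer hyperface, so $\spine\nq \subseteq \Upsilon^\emptyset\nq \subseteq \Upsilon^S\nq$. The composite $\spine\nq \incl \cell\nq$ lies in $\J_A$ and is thus a trivial cofibration, so by 2-out-of-3 it suffices to show that $\spine\nq \incl \Upsilon^S\nq$ is a trivial cofibration.

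I would factor this inclusion through $\Upsilon^\emptyset\nq$. Applying \cref{part 1} to the (vacuously downward closed) full set of outer hyperfaces of $\nq$ gives that $\Upsilon^\emptyset\nq \incl \cell\nq$ is a trivial cofibration, and 2-out-of-3 against $\spine\nq \incl \cell\nq$ yields that $\spine\nq \incl \Upsilon^\emptyset\nq$ is also a trivial cofibration. It therefore remains to show that $\Upsilon^\emptyset\nq \incl \Upsilon^S\nq$ is a trivial cofibration, which I would do by induction on $\dim\nq$. The base cases are the poly-vertebral cells in which $S$ is forced to be $\emptyset$ (e.g.\ $[2;0,0]$ and all poly-vertebral cells of dimension 3); for these the inclusion is the identity.

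For the inductive step, enumerate $S$ arbitrarily as $\delta_1, \dots, \delta_m$ and attach the $\delta_j$ one at a time via the gluing square
\[
\begin{tikzcd}[row sep = large]
X_j \arrow [r, hook, "\subset"] \arrow [d] \glue & \cell\mp \arrow [d, "\delta_j"] \\
\Upsilon^{S_{j-1}}\nq \arrow [r, hook, "\subset"] & \Upsilon^{S_j}\nq
\end{tikzcd}
\]
where $\mp$ denotes the domain of $\delta_j$, $S_j = \{\delta_1, \dots, \delta_j\}$, and $X_j = \delta_j^{-1}(\Upsilon^{S_{j-1}}\nq)$. A case analysis analogous to that in the proof of \cref{part 1} shows that pulling back an outer hyperface of $\nq$ along $\delta_j = \delta_v^{k;i}$ yields an outer hyperface of $\mp$, while pulling back $\delta_v^{k';i'} \in S_{j-1}$ yields $\delta_v^{k';i'}\mp$ if $k' \neq k$, $\delta_v^{k;i'}\mp$ if $k' = k$ and $i' < i$, and $\delta_v^{k;i'-1}\mp$ if $k' = k$ and $i' > i$---in every case an inner vertical hyperface of $\mp$. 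Hence $X_j = \Upsilon^{T_j}\mp$ for some set $T_j$ of inner vertical hyperfaces of $\mp$ with $|T_j| = |S_{j-1}|$.

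Two conditions must be verified before invoking the inductive hypothesis on $\Upsilon^{T_j}\mp \incl \cell\mp$. First, $\mp$ must be poly-vertebral; this fails only when $\nq = [1;2]$ and $\delta_j = \delta_v^{1;1}$, which is excluded since $S$ would then consist of the only inner hyperface of $\nq$ and hence be inadmissible. Second, $T_j$ must be admissible; this is automatic when $n \ge 2$ (as $T_j$ contains no inner horizontal hyperfaces of $\mp$), and when $n = 1$ the admissibility of $S$ forces $|S| \le q_1 - 2$, so $|T_j| \le q_1 - 3$, strictly less than the number $q_1 - 2$ of inner vertical hyperfaces of $\mp = [1;q_1-1]$. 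Given these, the induction makes $\Upsilon^{T_j}\mp \incl \cell\mp$ a trivial cofibration, and its pushout $\Upsilon^{S_{j-1}}\nq \incl \Upsilon^{S_j}\nq$ is one too. The main technical obstacle is the pullback computation itself, particularly tracking the index shift in the $k' = k$ case and the counting argument needed to deliver the admissibility bound in the $n = 1$ case.
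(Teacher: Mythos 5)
Your proposal is correct and follows essentially the same route as the paper: reduce via right cancellation to the inclusion of the outer-hyperface subobject (Lemma 3.2 plus the observation that the spine of a poly-vertebral cell is generated by outer hyperfaces), then adjoin the elements of $S$ one at a time via gluing squares whose left-hand edges are $\Upsilon^{T}\mp \incl \cell\mp$ for the pulled-back set $T$, with the same index shift $\delta_v^{k;i'} \mapsto \delta_v^{k;i'-1}$ for $i' > i$ and the same admissibility count in the $n=1$ case. The only differences are bookkeeping (you induct on $\dim\nq$ and enumerate $S$, the paper inducts on $|S|$ directly, so it can also handle the case $|T|<|S|$ on the same object) and your extra, harmless detour through $\spine\nq$.
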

\begin{proof}
Again, we proceed by induction on $|S|$.
If $S = \varnothing$ then the lemma follows from \cref{part 1}.
So we may assume $|S| \ge 1$.
Choose an element $\delta_v^{k;i} \in S$, which then necessarily satisfies $1 \le k \le n$ and $1 \le i \le q_k-1$.
Let $S' = S \setminus \{\delta_v^{k;i}\}$.
By a similar argument to that presented above for \cref{part 1}, what we must prove reduces to showing that $X$ in the gluing square
\[
\begin{tikzcd}[row sep = large]
X \arrow [r, hook, "\subset"] \arrow [d] \glue & \cell[n;\pp] \arrow [d, "\delta_v^{k;i}"] \\
\Upsilon^{S'}{\nq} \arrow [r, hook, "\subset"] & \Upsilon^S{\nq}
\end{tikzcd}
\]
is of the form $X = \Upsilon^T\np$ (where $\pp = (q_1,\dots,q_k-1,\dots,q_n)$) for some admissible set $T$ of inner vertical hyperfaces of $\cell\np$ with $|T| < |S|$.
Note that $\np$ must be poly-vertebral as the only cell with an inner vertical hyperface of mono-vertebral shape is $\cell[1;2]$, and for $\nq = [1;2]$ the only admissible $S$ is the empty set.

We first show that the square
\[
\begin{tikzcd}[row sep = large]
{\Upsilon^\varnothing\np}
\arrow [r, hook, "\subset"]
\arrow [d] &
{\cell\np}
\arrow [d, "\delta_v^{k;i}"] \\
{\Upsilon^\varnothing\nq}
\arrow [r, hook, "\subset"] &
{\Upsilon^S\nq}
\end{tikzcd}
\]
is a pullback.
Since $q_k \ge 2$ and $p_\ell = q_\ell$ for $\ell \neq k$, we have $p_1 = 0$ if and only if $q_1 = 0$.
Moreover, if $p_1 = q_1 = 0$ then the pullback of $\delta_h^0$ along $\delta_v^{k;i}$ is $\delta_h^0\np$.
Similarly, $p_n = 0$ if and only if $q_n = 0$, in which case the pullback of $\delta_h^n$ along $\delta_v^{k;i}$ is $\delta_h^n\np$.
For the outer vertical hyperfaces, if $q_\ell \ge 1$ and either $j = 0$ or $j = q_\ell$ then the pullback of $\delta_v^{\ell;j}$ along $\delta_v^{k;i}$ is $\delta_v^{\ell;j}\np$ except when $(\ell,j) = (k,q_k)$, in which case the pullback is $\delta_v^{k;q_k-1}\mp = \delta_v^{k;p_k}\mp$.
Thus the above square is indeed a pullback.

It then follows that $X = \Upsilon^T\np$ where $T$ consists of the pullbacks of elements of $S'$ along $\delta_v^{k;i}$.
Similarly to the outer case considered above, the pullback of $\delta_v^{\ell;j}\in S'$ along $\delta_v^{k;i}$ is $\delta_v^{\ell;j}\np$ except when $\ell = k$ and $j > i$, in which case the pullback is $\delta_v^{k;j-1}\np$.
Hence $T$ is a set of inner vertical hyperfaces of $\cell\np$.
Moreover pulling back along $\delta_v^{k;i}$ gives a bijection between $S'$ and $T$ and hence $|T| = |S|-1$.
Thus it remains to show that $T$ is admissible.
Suppose otherwise, then as we mentioned before the statement of \cref{part 2}, we must have $\np = [1;p_1]$ and
\[
|T| = \bigl|\{\delta_v^{1;\ell}[1;p_1]: 1 \le \ell \le p_1-1\}\bigr| = p_1-1.
\]
This implies $|S| = |T|+1 = p_1 = q_1-1$.
But then $S$ contains all of the inner hyperfaces of $\cell\nq = \cell[1;q_1]$, which contradicts our assumption that $S$ is admissible.
This completes the proof of \cref{part 2}.
\end{proof}

Now we consider the inner horizontal hyperfaces of $\cell\nq$.
Recall that for each $1 \le k \le n-1$, we have a family of $k$-th horizontal hyperfaces $\delta_h^{k;\langle\alpha,\alpha'\rangle}$ indexed by $\langle\alpha,\alpha'\rangle \in \shuffle(q_k,q_{k+1})$.
\begin{definition}
	If $S$ is a set of faces of $\cell\nq$, we define
	\[
	\shuffle_S(q_k,q_{k+1}) \defeq \left\{\langle\alpha,\alpha'\rangle \in \shuffle(q_k,q_{k+1}) : \delta_h^{k;\langle\alpha,\alpha'\rangle} \in S\right\}.
	\]
\end{definition}

\begin{definition}\label{admissible 2}
	A set $S$ of inner hyperfaces of $\cell\nq$ is called \emph{admissible} if:
\begin{itemize}
	\item[(i)] $S$ is not the set of all inner hyperfaces of $\cell\nq$;
	\item[(ii)] there is at most one $1 \le k \le n-1$ such that
	\[
	\varnothing \neq \shuffle_S(q_k,q_{k+1}) \neq \shuffle(q_k,q_{k+1})
	\]
	(we will write $k_S$ for such $k$ if it exists); and
	\item[(iii)] if $k_S$ exists, then $\shuffle_S(q_{k_S},q_{k_S+1})$ is downward closed with respect to the order described in \cref{simplicial sets}.
\end{itemize}
\end{definition}
Note that \cref{admissible 2} reduces to \cref{admissible 1} if $S$ contains no horizontal hyperfaces.

\begin{remark}
    The role of \cref{admissible 2}(iii) is to ensure that the intersections (meaning pullbacks) of the hyperfaces in $S$ are well-behaved so that we do not have to worry about faces of $\cell\nq$ of codimension larger than $2$.
    For example, consider the case $\nq = [2;2,1]$.
    There are three inner horizontal hyperfaces in this case, corresponding to the three $(2,1)$-shuffles $\langle\alpha,\alpha'\rangle < \langle\beta,\beta'\rangle < \langle\gamma,\gamma'\rangle$; graphically, the shuffles
\[
\begin{tikzpicture}[baseline = 12]
\foreach \x in {0,1,2}
\draw[gray!50!white, thin] (\x,0) -- (\x,1);
\foreach \x in {0,1}
\draw[gray!50!white, thin] (0,\x) -- (2,\x);
\draw[thick] (0,0) -- (0,1) -- (2,1);
\end{tikzpicture}
\hspace{5pt} < \hspace{5pt}
\begin{tikzpicture}[baseline = 12]
\foreach \x in {0,1,2}
\draw[gray!50!white, thin] (\x,0) -- (\x,1);
\foreach \x in {0,1}
\draw[gray!50!white, thin] (0,\x) -- (2,\x);
\draw[thick] (0,0) -- (1,0) -- (1,1) -- (2,1);
\end{tikzpicture}
\hspace{5pt} < \hspace{5pt}
\begin{tikzpicture}[baseline = 12]
\foreach \x in {0,1,2}
\draw[gray!50!white, thin] (\x,0) -- (\x,1);
\foreach \x in {0,1}
\draw[gray!50!white, thin] (0,\x) -- (2,\x);
\draw[thick] (0,0) -- (2,0) -- (2,1);
\end{tikzpicture}
\]
correspond to the hyperfaces
\[
\left\{
	\begin{tikzpicture}[baseline = -3]
	\filldraw
	(0,0) circle [radius = 1pt]
	(2,0) circle [radius = 1pt];
	\draw[->, yshift = 2.25pt] (0.1,0.1) .. controls (0.4,0.6) and (0.6,0.6) .. (1,0) .. controls (1.4,0.4) and (1.6,0.4) .. (1.9,0.1);
	\draw[->, yshift = 0.75pt] (0.1,0.1) .. controls (0.4,0.6) and (0.6,0.6) .. (1,0) .. controls (1.4,-0.4) and (1.6,-0.4) .. (1.9,-0.1);
	\draw[->, yshift = -0.75pt] (0.1,0) -- (1,0) .. controls (1.4,-0.4) and (1.6,-0.4) .. (1.9,-0.1);
	\draw[->, yshift = -2.25pt] (0.1,-0.1) .. controls (0.4,-0.6) and (0.6,-0.6) .. (1,0) .. controls (1.4,-0.4) and (1.6,-0.4) .. (1.9,-0.1);
	\draw[->, double] (0.5,0.4) -- (0.5,0.05);
	\draw[->, double, yshift = -1pt] (0.5,-0.05) -- (0.5,-0.4);
	\draw[->, double, yshift = 1pt] (1.5,0.25) -- (1.5,-0.25);
	\end{tikzpicture}\right\}, \hspace{10pt}
	\left\{
	\begin{tikzpicture}[baseline = -3]
	\filldraw
	(0,0) circle [radius = 1pt]
	(2,0) circle [radius = 1pt];
	\draw[->, yshift = 2.25pt] (0.1,0.1) .. controls (0.4,0.6) and (0.6,0.6) .. (1,0) .. controls (1.4,0.4) and (1.6,0.4) .. (1.9,0.1);
	\draw[->, yshift = 0.75pt] (0.1,0) -- (1,0) .. controls (1.4,0.4) and (1.6,0.4) .. (1.9,0.1);
	\draw[->, yshift = -0.75pt] (0.1,0) -- (1,0) .. controls (1.4,-0.4) and (1.6,-0.4) .. (1.9,-0.1);
	\draw[->, yshift = -2.25pt] (0.1,-0.1) .. controls (0.4,-0.6) and (0.6,-0.6) .. (1,0) .. controls (1.4,-0.4) and (1.6,-0.4) .. (1.9,-0.1);
	\draw[->, double, yshift = 1pt] (0.5,0.4) -- (0.5,0.05);
	\draw[->, double, yshift = -1pt] (0.5,-0.05) -- (0.5,-0.4);
	\draw[->, double] (1.5,0.25) -- (1.5,-0.25);
	\end{tikzpicture}\right\}, \hspace{10pt}
	\left\{
	\begin{tikzpicture}[baseline = -3]
	\filldraw
	(0,0) circle [radius = 1pt]
	(2,0) circle [radius = 1pt];
	\draw[->, yshift = 2.25pt] (0.1,0.1) .. controls (0.4,0.6) and (0.6,0.6) .. (1,0) .. controls (1.4,0.4) and (1.6,0.4) .. (1.9,0.1);
	\draw[->, yshift = 0.75pt] (0.1,0) -- (1,0) .. controls (1.4,0.4) and (1.6,0.4) .. (1.9,0.1);
	\draw[->, yshift = -0.75pt] (0.1,-0.1) .. controls (0.4,-0.6) and (0.6,-0.6) .. (1,0) .. controls (1.4,0.4) and (1.6,0.4) .. (1.9,0.1);
	\draw[->, yshift = -2.25pt] (0.1,-0.1) .. controls (0.4,-0.6) and (0.6,-0.6) .. (1,0) .. controls (1.4,-0.4) and (1.6,-0.4) .. (1.9,-0.1);
	\draw[->, double, yshift = 1pt] (0.5,0.4) -- (0.5,0.05);
	\draw[->, double] (0.5,-0.05) -- (0.5,-0.4);
	\draw[->, double, yshift = -1pt] (1.5,0.25) -- (1.5,-0.25);
	\end{tikzpicture}\right\}
\]
respectively.
The intersection of $\delta_h^{1;\langle\alpha,\alpha'\rangle}$ and $\delta_h^{1;\langle\gamma,\gamma'\rangle}$ is then the face
\[
\left\{
	\begin{tikzpicture}[baseline = -3]
	\filldraw
	(0,0) circle [radius = 1pt]
	(2,0) circle [radius = 1pt];
	\draw[->, yshift = 0.75pt] (0.1,0.1) .. controls (0.4,0.6) and (0.6,0.6) .. (1,0) .. controls (1.4,0.4) and (1.6,0.4) .. (1.9,0.1);
	\draw[->, yshift = -0.75pt] (0.1,-0.1) .. controls (0.4,-0.6) and (0.6,-0.6) .. (1,0) .. controls (1.4,-0.4) and (1.6,-0.4) .. (1.9,-0.1);
	\draw[->, double] (0.5,0.4) -- (0.5,-0.4);
	\end{tikzpicture}\right\}
\]
of codimension $3$, which is ``too small''.
If $S$ is an admissible set containing $\delta_h^{1;\langle\alpha,\alpha'\rangle}$ and $\delta_h^{1;\langle\gamma,\gamma'\rangle}$, then (iii) implies that $S$ also contains $\delta_h^{1;\langle\beta,\beta'\rangle}$.
Since this ``too small'' face is contained in the intersection of  $\delta_h^{1;\langle\alpha,\alpha'\rangle}$ (or $\delta_h^{1;\langle\gamma,\gamma'\rangle}$) and $\delta_h^{1;\langle\beta,\beta'\rangle}$, we may essentially disregard it.

There are two obviously downward closed subsets of $\shuffle(q_k,q_{k+1})$, namely $\varnothing$ and $\shuffle(q_k,q_{k+1})$.
\cref{admissible 2}(ii) asks that we always have one of these two subsets for any value of $k$, with a possible exception of $k = k_S$.
This simplifies the proof and in particular the descriptions of the sets $T_1$ and $T'_1$ defined below, but it is not essential.
Indeed, it seems possible to prove a variant of \cref{part 3} where (ii) is removed from \cref{admissible 2} and (iii) is replaced by:
\begin{itemize}
    \item[(iii')] $\shuffle_S(q_k,q_{k+1})$ is downward closed for all $1 \le k \le n-1$.
\end{itemize}
Although this modification makes \cref{part 3} slightly more general, we see no use in this extra generality.
\end{remark}

\begin{lemma}\label{part 3}
	The inclusion $\Upsilon^S\nq \incl \cell\nq$ is a trivial cofibration for any poly-vertebral $\nq \in \cell$ and for any admissible set $S$ of inner hyperfaces of $\cell\nq$.
\end{lemma}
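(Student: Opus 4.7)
The plan is to extend the inductive strategy of Lemmas \ref{part 1}--\ref{part 2}, now proceeding by lexicographic induction on $(\dim\nq, |S|)$. The base case — when $S$ contains no horizontal hyperface — is Lemma \ref{part 2} (since \cref{admissible 2} then reduces to \cref{admissible 1}). Otherwise, choose a horizontal hyperface $\delta = \delta_h^{k;\langle\alpha,\alpha'\rangle} \in S$ to remove as follows: if $k_S$ from \cref{admissible 2}(ii) exists, take $k = k_S$ and $\langle\alpha,\alpha'\rangle$ a maximal element of $\shuffle_S(q_{k_S},q_{k_S+1})$ in the shuffle order; otherwise take any $k$ with $\shuffle_S(q_k,q_{k+1}) = \shuffle(q_k,q_{k+1}) \neq \varnothing$ and $\langle\alpha,\alpha'\rangle$ maximal in $\shuffle(q_k,q_{k+1})$. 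A brief case analysis then confirms that $S' \defeq S \setminus \{\delta\}$ remains admissible, the key point being that maximality of $\langle\alpha,\alpha'\rangle$ preserves downward-closedness.

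By the inductive hypothesis, the inclusion $\Upsilon^{S'}\nq \incl \cell\nq$ is a trivial cofibration, so by 2-out-of-3 it suffices to show that $\Upsilon^{S'}\nq \incl \Upsilon^S\nq$ is one too. Letting $\mathbf{q'} \defeq (q_1,\dots,q_{k-1},q_k+q_{k+1},q_{k+2},\dots,q_n)$, so that $\delta : \cell[n-1;\mathbf{q'}] \to \cell\nq$, the gluing square
\[
\begin{tikzcd}[row sep = large]
X \arrow [r, hook, "\subset"] \arrow [d] \glue & \cell[n-1;\mathbf{q'}] \arrow [d, "\delta"] \\
\Upsilon^{S'}\nq \arrow [r, hook, "\subset"] & \Upsilon^S\nq
\end{tikzcd}
\]
reduces this to showing that $X \incl \cell[n-1;\mathbf{q'}]$ is a trivial cofibration. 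Since $\dim[n-1;\mathbf{q'}] < \dim\nq$, the inductive hypothesis will apply provided we can exhibit $X$ as $\Upsilon^T[n-1;\mathbf{q'}]$ for some admissible set $T$ of inner hyperfaces. The poly-vertebrality of $[n-1;\mathbf{q'}]$ is forced by admissibility condition (i) on $S$, since in the only potential counterexamples $\nq \in \{[2;0,0], [2;0,1], [2;1,0]\}$ the unique inner horizontal hyperface is already the only inner hyperface, and $S$ containing it would violate (i).

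The main obstacle is the explicit computation of $T$. Pullback along $\delta$ sends outer hyperfaces of $\nq$ to outer hyperfaces of $[n-1;\mathbf{q'}]$, giving $\Upsilon^\varnothing[n-1;\mathbf{q'}] \subset X$. Each inner vertical hyperface $\delta_v^{\ell;j} \in S'$ pulls back to an inner vertical hyperface of $\cell[n-1;\mathbf{q'}]$ at the appropriately shifted position (with $\ell \in \{k,k+1\}$ absorbed into the merged $k$-th vertical position), and each horizontal $\delta_h^{k';\langle\beta,\beta'\rangle} \in S'$ with $k' \neq k$ pulls back to a horizontal hyperface at the correspondingly shifted position. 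The subtle case is the remaining horizontal hyperfaces at position $k$: by \cref{immediate predecessor of shuffle}, the immediate predecessors of $\langle\alpha,\alpha'\rangle$ (indexed by $\lrcorner\langle\alpha,\alpha'\rangle$) pull back to the inner vertical hyperfaces $\delta_v^{k;i}[n-1;\mathbf{q'}]$, while strictly lower shuffles in $\shuffle_{S'}(q_k,q_{k+1})$ pull back to subfaces already contained in these. Verifying that the resulting $T$ is admissible — in particular that $\shuffle_T$ remains downward-closed for at most one index $k_T$ — will rely on this description together with \cref{dominating shuffle,points} and the maximality of $\langle\alpha,\alpha'\rangle$.
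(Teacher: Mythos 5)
Your proposal follows the paper's own proof essentially step for step: the same double induction (on $\dim\nq$ and the number of horizontal hyperfaces in $S$), the same choice of a maximal shuffle $\langle\alpha,\alpha'\rangle$ at the index $k_S$ (or at an index where all shuffles are present), the same gluing square reducing to $X = \Upsilon^T[n-1;\mathbf{q'}]$, and the same description of $T$ via shifted horizontal and vertical hyperfaces together with the vertical hyperfaces indexed by $\lrcorner\langle\alpha,\alpha'\rangle$ coming from the immediate predecessors of $\langle\alpha,\alpha'\rangle$. One correction of emphasis: for the $T$ so constructed, conditions (ii) and (iii) of admissibility are automatic (each $\shuffle_T(p_\ell,p_{\ell+1})$ is either empty or full), and the genuinely delicate point is condition (i) --- that $T$ is not the set of \emph{all} inner hyperfaces of $\cell[n-1;\mathbf{q'}]$ --- which the paper proves contrapositively by showing that $T$ being full would force $S$ to be full, using $\ulcorner\langle\alpha,\alpha'\rangle = \varnothing$ to recognise the maximum shuffle.
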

\begin{proof}
Let $S_h \subset S$ denote the set of horizontal hyperfaces in $S$.
We proceed by induction on $\dim\nq$ and $|S_h|$.
If $S_h = \varnothing$ then the result follows from \cref{part 2}, so we may assume $|S_h|\ge 1$.
Choose $1 \le k \le n-1$ so that $S$ contains a $k$-th horizontal hyperface, where we take $k = k_S$ if the latter exists.
Let $\langle \alpha, \alpha' \rangle \in \shuffle_S (q_k,q_{k+1})$ be a maximal one.
Then $S' = S \setminus \bigl\{\delta_h^{k;\langle\alpha,\alpha'\rangle}\bigr\}$ is admissible, and so once again it suffices to prove that $X$ in the gluing square
\[
\begin{tikzcd}[row sep = large]
X \arrow [r, hook, "\subset"] \arrow [d] \glue & \cell[n-1;\pp] \arrow [d, "\delta_h^{k;\langle\alpha,\alpha'\rangle}"] \\
\Upsilon^{S'}{\nq} \arrow [r, hook, "\subset"] & \Upsilon^S{\nq}
\end{tikzcd}
\]
(where $\pp = (q_1,\dots,q_{k-1},q_k+q_{k+1},q_{k+2},\dots,q_n)$) is of the form $X = \Upsilon^T[n-1;\pp]$ for some admissible $T$.
By a similar argument to that presented in the proof of \cref{part 2}, $[n-1;\pp]$ must be poly-vertebral.

\begin{claim}\label{claim 0}
Let $Y \subset \cell[n-1;\pp]$ be the cellular subset defined by the following pullback square:
\[
\begin{tikzcd}[row sep = large]
Y
\pullback
\arrow [r, hook, "\subset"]
\arrow [d] &
{\cell[n-1;\pp]}
\arrow [d, "\delta_h^{k;\langle\alpha,\alpha'\rangle}"] \\
{\Upsilon^\varnothing\nq}
\arrow [r, hook, "\subset"] &
{\Upsilon^S\nq}
\end{tikzcd}
\]
Then $Y$ is generated by the outer hyperfaces of $\cell[n-1;\pp]$, \emph{i.e.}~$Y = \Upsilon^\varnothing[n-1;\pp]$.
\end{claim}
\begin{proof}
We first show the containment $Y \subset \Upsilon^\varnothing[n-1;\pp]$.
If $q_1 = 0$, then the pullback of the hyperface $\delta_h^0$ along $\delta_h^{k;\langle\alpha,\alpha'\rangle}$ is $\delta_h^0[n-1;\pp]$.
Since $\delta_h^0[n-1;\pp]$ is an outer face of $\cell[n-1;\pp]$ (of codimension $q_2+1$ if $k = 1$ and of codimension $1$ otherwise), it is contained in $\Upsilon^\varnothing[n-1;\pp]$ by \cref{inert factors through outer}.
The hyperface $\delta_h^n$ (if it exists) can be treated dually.

Next we consider the vertical hyperfaces of $\cell\nq$.
Fix $1 \le \ell \le n$ with $q_\ell \ge 1$.
Then the pullback of $\delta_v^{\ell;0}$ along $\delta_h^{k;\langle\alpha,\alpha'\rangle}$ is:
\begin{itemize}
	\item $\delta_v^{\ell;0}[n-1;\pp]$ if $\ell <k$;
	\item $\delta_v^{\ell-1;0}[n-1;\pp]$ if $\ell >k+1$; and
	\item contained in $\delta_v^{k;0}[n-1;\pp]$ if $\ell = k$ or $\ell = k+1$.
\end{itemize}
The hyperfaces $\delta_v^{\ell;q_\ell}$ can be treated dually.
This proves $Y \subset \Upsilon^\varnothing[n-1;\pp]$.

For the other containment $\Upsilon^\varnothing[n-1;\pp] \subset Y$, we must show that any outer hyperface of $\cell[n-1;\pp]$ can be obtained by pulling back some outer hyperface of $\cell\nq$ along $\delta_h^{k;\langle\alpha,\alpha'\rangle}$.
If $p_1 = 0$, then $q_1 = 0$ (because $p_1 = q_1$ if $k \neq 1$ and $p_1 = q_1 + q_2$ if $k = 1$) and the hyperface $\delta_h^0[n-1;\pp]$ is precisely the pullback of $\delta_h^0$ along $\delta_h^{k;\langle\alpha,\alpha'\rangle}$.
The other horizontal hyperface $\delta_h^{n-1}[n-1;\pp]$ (if it exists) can be treated dually.

Now we consider the vertical hyperfaces of $\cell[n-1;\pp]$.
Fix $1 \le \ell \le m = n-1$ with $p_\ell \ge 1$.
Then the hyperface $\delta_v^{\ell;0}[n-1;\pp]$ is the pullback (along $\delta_v^{k;\langle\alpha,\alpha'\rangle}$) of:
\begin{itemize}
	\item
	$\delta_v^{\ell;0}$ if $\ell < k$;
	\item
	$\delta_v^{\ell+1;0}$ if $\ell > k$;
	\item
	$\delta_v^{k;0}$ if $\ell = k$ and $\alpha(1) = 1$; and
	\item
	$\delta_v^{k+1;0}$ if $\ell = k$ and $\alpha'(1) = 1$.
\end{itemize}
Note that if $\delta_v^{k;0}[n-1;\pp]$ exists then $p_k \ge 1$ so $\alpha(1) \in [q_k]$ and $\alpha'(1) \in [q_{k+1}]$ are well-defined.
Moreover, $\langle \alpha, \alpha' \rangle \in \shuffle(q_k,q_{k+1})$ implies that we must have either $\alpha(1) = 1$ or $\alpha'(1) = 1$.
Thus the above list indeed covers all possible cases.

The remaining hyperfaces $\delta_v^{\ell;p_\ell}[n-1;\pp]$ can be treated dually, and this completes the proof of \cref{claim 0}.
\end{proof}

It now follows from the following claims that $X = \Upsilon^T[n-1;\pp]$ holds for
\[
T = T_1 \cup T'_1 \cup T_2 \cup T_3 \cup T'_3 \cup T_4 \cup T'_4
\]
where
\[
\begin{split}
T_1 &= \bigl\{\delta_h^{\ell;\langle \gamma, \gamma' \rangle} [n-1;\pp] : 1 \le \ell \le k-1,~\shuffle_S(q_\ell,q_{\ell+1}) = \shuffle(q_\ell,q_{\ell+1}),\\
&\hspace{100pt}\langle\gamma,\gamma'\rangle \in \shuffle(p_\ell,p_{\ell+1})\bigr\},\\
T'_1 &=\bigl\{\delta_h^{\ell-1;\langle \gamma, \gamma' \rangle} [n-1;\pp] : k+1 \le \ell \le n-1,~\shuffle_S(q_\ell,q_{\ell+1}) = \shuffle(q_\ell,q_{\ell+1}),\\
&\hspace{110pt}\langle\gamma,\gamma'\rangle \in \shuffle(p_{\ell-1},p_{\ell})\bigr\},\\
T_2 &= \bigl\{\delta_v^{k;j}[n-1;\pp]: j \in \lrcorner\langle\alpha,\alpha'\rangle\bigr\},\\
T_3 &= \bigl\{\delta_v^{\ell;j}[n-1;\pp]: 1 \le \ell<k,~\delta_v^{\ell;j} \in S'\bigr\},\\
T'_3 &= \bigl\{\delta_v^{\ell-1;j}[n-1;\pp]: k+1<\ell \le n,~\delta_v^{\ell;j} \in S'\bigr\},\\
T_4 &= \bigl\{\delta_v^{k;j}[n-1;\pp] : (\exists i \in [q_k])\left[\delta_v^{k;i} \in S',~\alpha^{-1}(i)=\{j\}\right]\bigr\}, \text{and}\\
T'_4 &= \bigl\{\delta_v^{k;j}[n-1;\pp] : (\exists i \in [q_{k+1}])\left[\delta_v^{k+1;i} \in S',~(\alpha')^{-1}(i)=\{j\}\right]\bigr\}.
\end{split}
\]
(See \cref{corners} for the definition of $\lrcorner\langle\alpha,\alpha'\rangle$.)
For each $1 \le m \le 4$, Claim $m$ below relates the elements in $T_m$ (and $T'_m$) to appropriate inner hyperfaces in $S'$.

\begin{claim}\label{claim 1}
	Fix $1 \le \ell \le k-1$.
	Then:
	\begin{itemize}
		\item [(i)] for any $\langle \beta, \beta' \rangle \in \shuffle(q_\ell,q_{\ell+1})$, each cell in the pullback of $\delta_h^{\ell;\langle\beta,\beta'\rangle}$ along $\delta_h^{k;\langle\alpha,\alpha'\rangle}$ is contained in some $\delta_h^{\ell;\langle\gamma,\gamma'\rangle}[n-1;\pp]$ ; and
		\item [(ii)] for any $\langle\gamma,\gamma'\rangle \in \shuffle(p_\ell,p_{\ell+1})$, the hyperface $\delta_h^{\ell;\langle\gamma,\gamma'\rangle}[n-1;\pp]$ is contained in the pullback of some $\delta_h^{\ell;\langle\beta,\beta'\rangle}$ along $\delta_h^{k;\langle\alpha,\alpha'\rangle}$.
	\end{itemize}
\end{claim}
The dual version of this claim relates, for $k+1 \le \ell \le n$, the $\ell$-th horizontal hyperfaces of $\cell\nq$ to the $(\ell-1)$-th horizontal hyperfaces of $\cell[n-1;\pp]$.

\begin{proof}
If $1 \le \ell < k-1$ (note the strict inequality) then both (i) and (ii) are straightforward since 
\[
\shuffle(p_\ell,p_{\ell+1}) = \shuffle(q_\ell,q_{\ell+1})
\]
and the pullback of $\delta_h^{\ell;\langle\beta,\beta'\rangle}$ along $\delta_h^{k;\langle\alpha,\alpha'\rangle}$ is precisely $\delta_h^{\ell;\langle\beta,\beta'\rangle}[n-1;\pp]$ for any $\langle\beta,\beta'\rangle \in \shuffle(q_\ell,q_{\ell+1})$.

Now we prove (i) for the case $\ell = k-1$.
Let $\langle\beta,\beta'\rangle \in \shuffle(q_{k-1},q_k)$ and suppose we are given a commutative square
\[
\begin{tikzcd}[row sep = large]
{[u;\mathbf{r}]}
\arrow [r, "{[\zeta;\boldsymbol{\zeta}]}"]
\arrow [d, "{[\xi;\boldsymbol{\xi}]}", swap] &
{[n-1;\pp]}
\arrow [d, "\delta_h^{k;\langle\alpha,\alpha'\rangle}"] \\
{[n-1;q_1,\dots,q_{k-1}+q_k,\dots,q_n]}
\arrow [r, "\delta_h^{k-1;\langle\beta,\beta'\rangle}", swap] &
{\nq}
\end{tikzcd}
\]
in $\cell$.
Then the square
\[
\begin{tikzcd}[row sep = large, column sep = small]
{[u]}
\arrow [r, "\zeta"]
\arrow [d, "\xi", swap] &
{[n-1]}
\arrow [d, "\delta^k"] \\
{[n-1]}
\arrow [r, "\delta^{k-1}", swap] &
{[n]}
\end{tikzcd}
\]
in $\Delta$ commutes so $k-1 \notin \im(\zeta)$.
We will assume there is some $1 \le v \le u$ such that $\zeta(v-1) \le k-2$ and $\zeta(v) \ge k$.
(Otherwise either $\zeta(v) \le k-2$ for all $v$ or $\zeta(v) \ge k$ for all $v$, and in either case $[\zeta;\boldsymbol{\zeta}]$ obviously factors through $\delta_h^{k-1;\langle\gamma,\gamma'\rangle}[n-1;\pp]$ for any $\langle\gamma,\gamma'\rangle \in \shuffle(p_{k-1},p_k)$.)
Since the $(p_{k-1},p_k)$-shuffles are the maximal non-degenerate simplices in $\Delta[p_{k-1}]\times\Delta[p_k]$, the map $\langle\zeta_{k-1},\zeta_k\rangle$ admits a factorisation
\[
\begin{tikzcd}
{\Delta[r_v]}
\arrow [r, "\phi"] &
{\Delta[p_k+p_{k+1}]}
\arrow [r, "{\langle\gamma,\gamma'\rangle}"] &
\Delta[p_{k-1}]\times\Delta[p_k]
\end{tikzcd}
\]
such that $\langle\gamma,\gamma'\rangle$ is a $(p_{k-1},p_k)$-shuffle.
Then $[\zeta;\boldsymbol{\zeta}]$ clearly factors through the hyperface $\delta_h^{k-1;\langle\gamma,\gamma'\rangle}[n-1;\pp]$.
This proves the first part of the claim for $\ell = k-1$.

For (ii), let $\langle \gamma,\gamma' \rangle \in \shuffle(p_{k-1},p_k)$.
Since the $(q_{k-1},q_k)$-shuffles are the maximal non-degenerate simplices in $\Delta[q_{k-1}]\times \Delta[q_k]$, the composite
\[
\begin{tikzcd}
{\Delta[p_{k-1}+p_k]}
\arrow [r, "{\langle\gamma,\gamma'\rangle}"] &
{\Delta[p_{k-1}]\times\Delta[p_k] = \Delta[q_{k-1}]\times\Delta[q_k+q_{k+1}]}
\arrow [r, "\id \times \alpha"] &
{\Delta[q_{k-1}]\times\Delta[q_k]}
\end{tikzcd}
\]
admits a factorisation
\[
\begin{tikzcd}
{\Delta[p_{k-1}+p_k] = \Delta[q_{k-1}+q_k+q_{k+1}]}
\arrow [r, "\zeta"]&
{\Delta[q_{k-1}+q_k]}
\arrow [r, "{\langle\beta,\beta'\rangle}"] &
{\Delta[q_{k-1}]\times\Delta[q_k]}
\end{tikzcd}
\]
such that $\langle\beta,\beta'\rangle$  is a $(q_{k-1},q_k)$-shuffle.
Then $\delta_h^{k-1;\langle\gamma,\gamma'\rangle}[n-1;\pp]$ is contained in the pullback of $\delta_h^{k-1;\langle\beta,\beta'\rangle}$ along $\delta_h^{k;\langle\alpha,\alpha'\rangle}$ since the square
\[
\begin{tikzpicture}[scale = 1.5]
\node at (-1,3){$[n-2;q_1,\dots,q_{k-1}+q_k+q_{k+1},\dots,q_n]$};
\node at (-2,1) {$[n-1;q_1,\dots,q_{k-1}+q_k,\dots,q_n]$};
\node at (1,2) {$[n-1;q_1,\dots,q_k+q_{k+1},\dots,q_n]$};
\node at (0,0) {$\nq$};

\draw[->] (-0.6,2.8)--(0.6,2.2);
\draw[->] (-1.2,2.6)--(-1.8,1.4);
\draw[->] (-1.6,0.8)--(-0.4,0.2);
\draw[->] (0.8,1.6)--(0.2,0.4);

\node[scale = 0.7] at (0.5,2.6) {$\delta_h^{k-1;\langle\gamma,\gamma'\rangle}$};
\node[scale = 0.7] at (-1.9,2.1) {$\delta_h^{k-1;\langle\zeta,\alpha'\gamma'\rangle}$};
\node[scale = 0.7] at (-1.4, 0.3) {$\delta_h^{k-1;\langle\beta,\beta'\rangle}$};
\node[scale = 0.7] at (0.9,1) {$\delta_h^{k;\langle\alpha,\alpha'\rangle}$};
\end{tikzpicture}
\]
commutes.
This completes the proof of \cref{claim 1}.
\end{proof}

\begin{claim}\label{claim 2}\hfill
	\begin{itemize}
		\item [(i)] For any $\langle \beta, \beta'\rangle \in \shuffle(q_k,q_{k+1})$ with $\langle\beta,\beta'\rangle \ngeq \langle\alpha,\alpha'\rangle$, the pullback of $\delta_h^{k;\langle\beta,\beta'\rangle}$ along $\delta_h^{k;\langle\alpha,\alpha'\rangle}$ is contained in $\delta_v^{k;j}[n-1;\pp]$ for some $j \in \lrcorner\langle\alpha,\alpha'\rangle$.
		\item [(ii)]For any $j \in \lrcorner\langle\alpha,\alpha'\rangle$, the hyperface $\delta_v^{k;j}[n-1;\pp]$ is the pullback of $\delta_h^{k;\langle\beta,\beta'\rangle}$ along $\delta_h^{k;\langle\alpha,\alpha'\rangle}$ for some $\langle \beta, \beta'\rangle \in \shuffle(q_k,q_{k+1})$ with $\langle\beta,\beta'\rangle < \langle\alpha,\alpha'\rangle$.
	\end{itemize}
\end{claim}
\begin{proof}
For (i), suppose $\langle \beta, \beta'\rangle$ is a $(q_k,q_{k+1})$-shuffle with $\langle\beta,\beta'\rangle \ngeq \langle\alpha,\alpha'\rangle$.
Then by \cref{dominating shuffle}, we can choose $j \in \lrcorner\langle\alpha,\alpha'\rangle$ such that $(\alpha(j),\alpha'(j)) \neq (\beta(j),\beta'(j))$.
The pullback of $\delta_h^{k;\langle\beta,\beta'\rangle}$ along $\delta_h^{k;\langle\alpha,\alpha'\rangle}$ is contained in $\delta_v^{k;j}[n-1;\pp]$.

To prove (ii), suppose $j \in \lrcorner\langle\alpha,\alpha'\rangle$.
Then the hyperface $\delta_v^{k;j}[n-1;\pp]$ is the pullback of $\delta_h^{k;\langle\beta,\beta'\rangle}$ along $\delta_h^{k;\langle\alpha,\alpha'\rangle}$ where $\langle\beta,\beta'\rangle$ is the $(q_k,q_{k+1})$-shuffle corresponding to $j$ under \cref{immediate predecessor of shuffle}.
Note that $\langle\beta,\beta'\rangle$ is an immediate predecessor of $\langle\alpha,\alpha'\rangle$ and so in particular $\langle\beta,\beta'\rangle < \langle\alpha,\alpha'\rangle$.
\end{proof}

\begin{claim}\label{claim 3}
	For any $1 \le \ell < k$ (respectively $k+1 < \ell \le n$) and $1 \le i \le q_\ell-1$, the pullback of $\delta_v^{\ell;i}$ along $\delta_h^{k;\langle\alpha,\alpha'\rangle}$ is $\delta_v^{\ell;i}[n-1;\pp]$ (resp.~$\delta_v^{\ell-1;i}[n-1;\pp]$).
\end{claim}
\begin{proof}
This is straightforward to check.
\end{proof}

\begin{claim}\label{claim 4}
	Fix $1 \le i \le q_k-1$ (respectively $1 \le i \le q_{k+1}-1$).
	Then the pullback of  $\delta_v^{k;i}$ (resp.~$\delta_v^{k+1;i}$) along $\delta_h^{k;\langle\alpha,\alpha'\rangle}$ is:
	\begin{itemize}
		\item precisely $\delta_v^{k;j}[n-1;\pp]$ if $\alpha^{-1}(i) = \{j\}$ (resp.~$(\alpha')^{-1}(i) =\{j\}$) for some $1 \le j \le p_k-1$; and
		\item contained in $\delta_v^{k;j}[n-1;\pp]$ for some $j \in \lrcorner\langle\alpha,\alpha'\rangle$ otherwise.
	\end{itemize}
\end{claim}
\begin{proof}
We will only consider the hyperfaces $\delta_v^{k;i}$ as $\delta_v^{k+1;i}$ can be treated dually.
The first case is straightforward to check.
In the second case, let $j = \min(\alpha^{-1}(i))$.
Then clearly the pullback of $\delta_v^{k;i}$ along $\delta_h^{k;\langle\alpha,\alpha'\rangle}$ is contained in $\delta_v^{k;j}[n-1;\pp]$, and so it remains to show that $j \in \lrcorner\langle\alpha,\alpha'\rangle$.
Note that $1 \le i \le q_k-1$ and $\alpha(j) = i$ imply $1 \le j \le p_k-1$.
Moreover, $\alpha(j-1) = \alpha(j)-1$ by our choice of $j$, and $\alpha(j+1)=i =\alpha(j)$ since $|\alpha^{-1}(i)| \ge 2$.
Therefore $j \in \lrcorner\langle\alpha,\alpha'\rangle$.
\end{proof}

Now we go back to the proof of \cref{part 3}.
We can deduce from \cref{claim 1,claim 2,claim 3,claim 4} that $X = \Upsilon^T[n-1;\pp]$.
It thus remains to prove that $T$ is an admissible set of inner hyperfaces of $\cell[n-1;\pp]$.
It is clear from our definitions of $T_1$ and $T'_1$ that, for any $1 \le \ell \le n-2$, either $\shuffle_T(p_\ell,p_{\ell+1}) = \varnothing$ or $\shuffle_T(p_\ell,p_{\ell+1}) = \shuffle(p_\ell,p_{\ell+1})$.
Thus $T$ satisfies \cref{admissible 2}(ii) and (iii).
To prove $T$ also satisfies (i), we will assume otherwise (\emph{i.e.}~$T$ contains all of the inner hyperfaces of $\cell[n-1;\pp]$) and deduce then $S$ does not satisfy (i), which is a contradiction.

For any $1 \le \ell \le k-1$, we have $\shuffle_T(p_\ell,p_{\ell+1}) = \shuffle(p_\ell,p_{\ell+1})$ and so our definition of $T_1$ implies $\shuffle_S(q_\ell,q_{\ell+1}) = \shuffle(q_\ell,q_{\ell+1})$.
Dually, we have $\shuffle_S(q_\ell,q_{\ell+1}) = \shuffle(q_\ell,q_{\ell+1})$ for all $k+1 \le \ell \le n$.
Thus $S$ contains all of the $\ell$-th horizontal hyperfaces of $\cell\nq$ for all $1 \le \ell \le n-1$ with $\ell \neq k$.

Next we consider the $k$-th horizontal hyperfaces of $\cell\nq$.
Note that since $S$ is admissible, $S$ contains all of the $k$-th horizontal hyperfaces if and only if $\langle \alpha, \alpha'\rangle$ is the maximum $(q_k,q_{k+1})$-shuffle.
We will prove this latter statement.
For any $1 \le j \le p_k-1$, $T$ contains $\delta_v^{k;j}[n-1;\pp]$ and so our definitions of $T_2$, $T_4$ and $T'_4$ imply that one of the following must hold:
\begin{itemize}
	\item $j \in \lrcorner\langle\alpha,\alpha'\rangle$;
	\item $\alpha(j')\neq \alpha(j)$ for all $j' \in [p_k]$ with $j' \neq j$; or
	\item $\alpha'(j')\neq \alpha'(j)$ for all $j' \in [p_k]$ with $j' \neq j$.
\end{itemize}
Therefore $\ulcorner\langle\alpha,\alpha'\rangle = \varnothing$, or equivalently, $\langle \alpha, \alpha'\rangle$ is the maximum $(q_k,q_{k+1})$-shuffle (by \cref{immediate predecessor of shuffle}).

Lastly, we consider the inner vertical hyperfaces of $\cell\nq$.
For any $1 \le \ell < k$ and for any $1 \le i \le q_\ell-1$, $T$ contains $\delta_v^{\ell;j}[n-1;\pp]$ and so our definition of $T_3$ implies that $\delta_v^{\ell;j} \in S$.
Dually, $\delta_v^{\ell;j} \in S$ for all $k+1 < \ell \le n$ and for all $1 \le j \le q_\ell-1$.
Note that $\langle\alpha,\alpha'\rangle \in \shuffle(q_k,q_{k+1})$ is the maximum one and so we have
\[
\begin{aligned}
	\alpha &=\{0,1,\dots,q_k,\underbrace{q_k,\dots,q_k}_{q_{k+1}\text { times}}\},\\
	\alpha' &= \{\underbrace{0,\dots,0}_{q_k\text { times}},0,1,\dots,q_{k+1}\}.
\end{aligned}
\]
Thus for each $1 \le i \le q_k-1$, $\delta_v^{k;i}[n-1;\pp]\in T$ and our definition of $T_4$ imply that $\delta_v^{k;i}\in S$.
Similarly, for each $1 \le i \le q_{k+1}-1$, $\delta_v^{k;q_k+i} \in T$ and our definition of $T'_4$ imply that $\delta_v^{k+1;i} \in S$.
This completes the proof of \cref{part 3}.
\end{proof}

\begin{proof}[Proof of \cref{horn inclusions are trivial cofibrations}]
The desired result follows from \cref{part 3} since setting
\[
S = \left\{\text {all inner hyperfaces of $\cell\nq$ except for $\delta_v^{k;i}$}\right\}
\]
yields $\Upsilon^S\nq = \horn_v^{k;i}\nq$ by \cref{vertical horn description} and setting
\[
S = \left\{\text{all inner hyperfaces of $\cell\nq$ except for the $k$-th horizontal ones}\right\}
\]
yields $\Upsilon^S\nq = \horn^k_h\nq$ by \cref{horizontal horn description} for the appropriate ranges of $k$ and $i$.
\end{proof}

\subsection{Vertical equivalence extensions are trivial cofibrations}
We will prove the following lemma in this subsection.
\begin{lemma}\label{vertical equivalence extensions}
	Every map in $\E_v$ is a trivial cofibration.
\end{lemma}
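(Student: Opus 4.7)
The plan is to prove the lemma by induction on the dimension of $\nq$. The base case is $\nq = [1;0]$ with $k = 1$: unpacking the Leibniz box product shows that $\Psi^1[1;0] \incl \Phi^1[1;0]$ coincides with $[\id;e] : \cell[1;0] \incl \cell[1;J]$, which appears as an explicit generator of $\J_A$ and is therefore a trivial cofibration by \cref{Ara characterisation}. For the inductive step with $\dim\nq > 1$, the strategy mirrors that of \cref{horn inclusions are trivial cofibrations}: I would construct a filtration $\Psi^k\nq = X_0 \subset X_1 \subset \cdots \subset X_N = \Phi^k\nq$ such that each successor inclusion $X_{i-1} \incl X_i$ fits into a gluing square (in the sense of \cref{gluing}) whose attaching map is a pushout of a map already known to be a trivial cofibration: either an inner horizontal or vertical horn inclusion from \cref{horn inclusions are trivial cofibrations}, a horizontal equivalence extension from $\J_A$, or a lower-dimensional vertical equivalence extension available by the inductive hypothesis.

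A natural candidate filtration is the one induced by the skeletal filtration of $J$ in the $k$-th factor of the box product. Writing $\Phi^k_{\le m}\nq \defeq \square_n(\Delta[n]; \Delta[q_1], \dots, \mathrm{sk}_m(J), \dots, \Delta[q_n])$, one obtains an increasing chain starting from $\Psi^k\nq$ and terminating at $\Phi^k\nq$. Each transition $\Phi^k_{\le m-1}\nq \incl \Phi^k_{\le m}\nq$ attaches the two non-degenerate $m$-simplices of $J$ (one starting at $\lozenge$ and one starting at $\blacklozenge$) one at a time; by the connected-colimit preservation properties of $\square_n$ recorded in \cref{box product}, each such attachment becomes a pushout along a boundary-type inclusion in $\hattheta$. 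The ``missing'' face of each attached cell is the unique face whose image in the $k$-th factor lies strictly inside $J$, which should force the resulting horn to be inner in the appropriate sense.

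The main obstacle will be the combinatorial bookkeeping needed to verify the claimed form of the attaching pullback at each stage, reminiscent of and more intricate than the gluing analyses carried out for \cref{part 3}. In particular, the interaction between the coskeletal cells of $J$ (which contribute higher non-degenerate simplices alternating between $\lozenge$ and $\blacklozenge$) and the outer faces in the other $n$ factors of the box product will require a case analysis adapted to the equivalence structure and, at the low-dimensional end of the filtration, may demand a right-cancellation argument of the type described in \cref{gluing} against a horizontal equivalence extension of the form $(\cell[0] \overset{e}\incl J) \hat\times (\partial\cell\mp \incl \cell\mp)$ for a suitable $\mp$, in order to bootstrap past the initial attachment of cells that involve the vertex $\blacklozenge \notin e(\Delta[0])$.
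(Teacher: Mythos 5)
Your base case and the general shape (induction on $\dim\nq$, gluing squares, inner horns, lower-dimensional vertical equivalence extensions) match the paper, but the core of your inductive step has a genuine gap. Filtering by the skeleta of $J$ in the $k$-th factor cannot be carried out with inner horns: already at the first nontrivial stage, the non-degenerate $1$-simplex of $J$ from $\lozenge$ to $\blacklozenge$ has both of its vertices present in $\Psi^k\nq$, so its attachment is a pushout of a genuine (vertical) \emph{boundary} inclusion, not of any horn, and in particular not of an inner one. Your remark that the missing face ``lies strictly inside $J$'' and hence ``forces the resulting horn to be inner'' does not hold here; more fundamentally, no sequence of inner-horn fillings can manufacture the invertibility datum that distinguishes $J$ from $\Delta[1]$ in a vertical slot. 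Your proposed rescue --- cancelling against a horizontal equivalence extension $(\cell[0]\overset{e}{\hookrightarrow}J)\hat\times(\partial\cell\mp\incl\cell\mp)$ --- targets the wrong duality: that map adjoins an equivalence in the \emph{horizontal} direction (a new object), whereas what is needed is the \emph{suspended} equivalence $[\id;e]:\cell[1;0]\incl\cell[1;J]$, i.e.\ exactly the base case $\Psi^1[1;0]\incl\Phi^1[1;0]$ of your induction.

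The paper's proof uses a different architecture that avoids presenting $\Psi^k\nq\incl\Phi^k\nq$ as a relative cell complex at all. It shows that the two composites $\cell\nq\incl\Psi^k\nq$ and $\cell\nq\incl\Phi^k\nq$ are trivial cofibrations and concludes by right cancellation (as flagged in \cref{gluing}). The very first gluing step attaches $\cell[1;J]$ to $\cell\nq$ along a pushout of $[\id;e]$ --- this is the one place the equivalence data enters --- and everything after that is achieved by gluing cells along inner horizontal and vertical horns (\cref{horn inclusions are trivial cofibrations}), except for the final step $X_3\incl\Psi^k\nq$ in the case $q_{k+1}=0$, which is a pushout of a strictly lower-dimensional vertical equivalence extension and so is handled by the inductive hypothesis. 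If you want to salvage your outline, you should replace the skeletal filtration of $J$ by this two-sided filtration from $\cell\nq$ together with the cancellation step, and replace the horizontal equivalence extension in your bootstrap by the suspension $[\id;e]$. (A further minor point: $\square_n$ preserves only \emph{connected} colimits in the non-first variables, so $\square_n(\Delta[n];\dots,\mathrm{sk}_0(J),\dots)$ is not computed factorwise from $\Delta[0]\amalg\Delta[0]$, and your chain does not actually begin at $\Psi^k\nq$.)
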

Recall that for any $\nq \in \cell$ and $1 \le k \le n$ with $q_k = 0$, the map $\Psi^k\nq \incl \Phi^k\nq$ is by definition the Leibniz box product
\[
\hat \square_n \left(
\begin{tikzcd}
{\partial\Delta[n]}
\arrow [d, hook]\\
{\Delta[n]}
\end{tikzcd};
\begin{tikzcd}
{\partial\Delta[q_1]}
\arrow [d, hook]\\
{\Delta[q_1]}
\end{tikzcd},
\dots,
\begin{tikzcd}
{\partial\Delta[q_{k-1}]}
\arrow [d, hook]\\
{\Delta[q_{k-1}]}
\end{tikzcd},
\begin{tikzcd}
{\Delta[0]}
\arrow [d, hook, "e"]\\
{J}
\end{tikzcd},
\begin{tikzcd}
{\partial\Delta[q_{k+1}]}
\arrow [d, hook]\\
{\Delta[q_{k+1}]}
\end{tikzcd},
\dots,
\begin{tikzcd}
{\partial\Delta[q_n]}
\arrow [d, hook]\\
{\Delta[q_n]}
\end{tikzcd}
\right)
\]
where $e$ is the nerve of the inclusion $\{\lozenge\} \incl \{\lozenge\cong\blacklozenge\}$.
Hence one of the legs in the defining colimit cone for $\Psi^k\nq$ is the (monic) map
\[
\cell\nq \cong \square_n(\Delta[n];\Delta[q_1],\dots,\Delta[q_{k-1}],\Delta[0],\Delta[q_{k+1}],\dots,\Delta[q_n]) \to \Psi^k\nq.
\]
In this subsection, we regard $\cell\nq$ as a cellular subset of $\Psi^k\nq$ via this map.

\begin{proof}
We will prove \cref{vertical equivalence extensions} by induction on $\dim\nq$.
Note that the base case is trivial since $\Psi^1[1;0] \incl \Phi^1[1;0]$ is isomorphic to the elementary A-anodyne extension $[\id;e]:\cell[1;0]\incl\cell[1;J]$; indeed, both of these maps are isomorphic to the nerve of the 2-functor that looks like:
\[
\left\{
\begin{tikzpicture}[baseline = -3]
\filldraw
(0,0) circle [radius = 1pt]
(1,0) circle [radius = 1pt];
\draw[->] (0.1,0.1) .. controls (0.4,0.4) and (0.6,0.4) .. (0.9,0.1);
\end{tikzpicture}\right\}
\incl
\left\{
\begin{tikzpicture}[baseline = -3]
\filldraw
(0,0) circle [radius = 1pt]
(1,0) circle [radius = 1pt];
\draw[->] (0.1,0.1) .. controls (0.4,0.4) and (0.6,0.4) .. (0.9,0.1);
\draw[->] (0.1,-0.1) .. controls (0.4,-0.4) and (0.6,-0.4) .. (0.9,-0.1);
\node[rotate = -90] at (0.5,0) {$\cong$};
\end{tikzpicture}\right\}.
\]
For the inductive step, it suffices to show that both $\cell\nq \incl \Psi^k\nq$ and $\cell\nq \incl \Phi^k\nq$ are trivial cofibrations.
These facts follow from \cref{step 0,step 1,step 2,step 3,step 4} which concern intermediate cellular subsets
\[
\cell\nq \subset X_0 \subset X_1 \subset X_2 \subset X_3 \subset \Psi^k\nq.
\]
\end{proof}

We will illustrate our argument below by providing pictures for the special case where $\nq = [3;1,0,0]$ and $k = 2$.
In this case $\Phi^k\nq$ and $\cell\nq$ look like:
\[
\Phi^2[3;1,0,0] = \left\{
\begin{tikzpicture}[baseline = -3]
\filldraw
(0,0) circle [radius = 1pt]
(1,0) circle [radius = 1pt]
(2,0) circle [radius = 1pt]
(3,0) circle [radius = 1pt];
\draw[->] (0.1,0.1) .. controls (0.4,0.4) and (0.6,0.4) .. (0.9,0.1);
\draw[->] (0.1,-0.1) .. controls (0.4,-0.4) and (0.6,-0.4) .. (0.9,-0.1);
\draw[->] (1.1,0.1) .. controls (1.4,0.4) and (1.6,0.4) .. (1.9,0.1);
\draw[->] (1.1,-0.1) .. controls (1.4,-0.4) and (1.6,-0.4) .. (1.9,-0.1);
\draw[->, double] (0.5,0.25)--(0.5,-0.25);
\node at (1.5,0) {\rotatebox{270}{$\cong$}};
\draw[->] (2.1,0)--(2.9,0);
\node[scale = 0.7] at (1.5, 0.5) {$\lozenge$};
\node[scale = 0.7] at (1.5, -0.5) {$\blacklozenge$};
\end{tikzpicture}\right\},
\hspace{5pt}
\cell[3;1,0,0] = \left\{
\begin{tikzpicture}[baseline = -3]
\filldraw
(0,0) circle [radius = 1pt]
(1,0) circle [radius = 1pt]
(2,0) circle [radius = 1pt]
(3,0) circle [radius = 1pt];
\draw[->] (0.1,0.1) .. controls (0.4,0.4) and (0.6,0.4) .. (0.9,0.1);
\draw[->] (0.1,-0.1) .. controls (0.4,-0.4) and (0.6,-0.4) .. (0.9,-0.1);
\draw[->] (1.1,0.1) .. controls (1.4,0.4) and (1.6,0.4) .. (1.9,0.1);
\draw[->, gray!50!white] (1.1,-0.1) .. controls (1.4,-0.4) and (1.6,-0.4) .. (1.9,-0.1);
\draw[->, double] (0.5,0.25)--(0.5,-0.25);
\node[gray!50!white] at (1.5,0) {\rotatebox{270}{$\cong$}};
\draw[->] (2.1,0)--(2.9,0);
\end{tikzpicture}\right\}.
\]

Fix $\nq \in \cell$ and $1 \le k \le n$ such that $n \ge 2$ and $q_k = 0$.
Note that an $(m;\pp)$-cell in $\Phi^k\nq$ consists of $\alpha : [m] \to [n]$ in $\Delta$ and $\alpha_{k'} : \Delta[p_{\ell}] \to V_{k'}$ for $\alpha(\ell-1)<{k'}\le\alpha(\ell)$ where
\[
(V_1,\dots,V_n) = \bigl(\Delta[q_1],\dots,\Delta[q_{k-1}],J,\Delta[q_{k+1}],\dots,\Delta[q_n]\bigr).
\]
Such $[\alpha;\aalpha]$ factors through:
\begin{itemize}
	\item[(\textasteriskcentered)] $\cell\nq$ unless there exists $1 \le \ell \le m$ such that $\alpha(\ell-1) < k \le \alpha(\ell)$ and $\blacklozenge \in \im\alpha_k$; and
	\item[(\textasteriskcentered\textasteriskcentered)] $\Psi^k\nq$ unless $\alpha$ and all $\alpha_{\ell}$ are surjective for $\ell \neq k$ and $\blacklozenge \in \im\alpha_k$.
\end{itemize}
We may assume $k \le n-1$ since the dual argument covers the case $k \ge 2$ and our assumption $n \ge 2$ implies that at least one of $k \le n-1$ and $k \ge 2$ must hold.

First, glue $\cell[1;J]$ to $\cell\nq$ as in the square
\[
\begin{tikzcd}[row sep = large]
{\cell[1;0]} \arrow [d, "{[\{k-1,k\};\id]}", swap] \arrow [r, "{[\id;e]}", hook] \glue & {\cell[1;J]} \arrow [d] \arrow [ddr, bend left, "{[\{k-1,k\};\id]}"] & \\
\cell\nq \arrow [r, hook] \arrow [drr, hook, bend right, "\subset", swap] & X_0 \arrow [dr, dashed, hook] & \\
& & \Psi^k\nq
\end{tikzcd}
\]
to obtain $X_0 \subset \Psi^k\nq$.
In our example, the image of $[\{1,2\};\id]$ looks like:
\[
\left\{
\begin{tikzpicture}[baseline = -3]
\filldraw[gray!50!white]
(0,0) circle [radius = 1pt]
(3,0) circle [radius = 1pt];
\filldraw
(1,0) circle [radius = 1pt]
(2,0) circle [radius = 1pt];
\draw[->, gray!50!white] (0.1,0.1) .. controls (0.4,0.4) and (0.6,0.4) .. (0.9,0.1);
\draw[->, gray!50!white] (0.1,-0.1) .. controls (0.4,-0.4) and (0.6,-0.4) .. (0.9,-0.1);
\draw[->] (1.1,0.1) .. controls (1.4,0.4) and (1.6,0.4) .. (1.9,0.1);
\draw[->] (1.1,-0.1) .. controls (1.4,-0.4) and (1.6,-0.4) .. (1.9,-0.1);
\draw[->, double, gray!50!white] (0.5,0.25)--(0.5,-0.25);
\node at (1.5,0) {\rotatebox{270}{$\cong$}};
\draw[->, gray!50!white] (2.1,0)--(2.9,0);
\node[scale = 0.7] at (1.5, 0.5) {$\lozenge$};
\node[scale = 0.7] at (1.5, -0.5) {$\blacklozenge$};
\end{tikzpicture}\right\}
\]
The following lemma records our construction of $X_0$.
\begin{lemma}\label{step 0}
    The inclusion $\cell\nq \incl X_0$ is a pushout of $[\id;e] : \cell[1;0] 
    \incl \cell[1;J]$.
\end{lemma}

Let $X_1 \subset \Phi^k\nq$ be the cellular subset generated by $X_0$ and those $(m;\pp$)-cells $[\alpha;\aalpha]$ satisfying $\alpha(m) = k$.
Since we are assuming $k \le n-1$, this condition $\alpha(m) = k$ implies $X_1 \subset \Psi^k\nq$.
Note that a non-degenerate $(m;\pp)$-cell $[\alpha;\aalpha]$ in $\Phi^k\nq$ is contained in $X_1 \setminus X_0$ if and only if it satisfies:
\begin{itemize}
	\item[(1a)] $\alpha(0) < k-1$;
	\item[(1b)] $\alpha(m) = k$; and
	\item[(1c)] $\blacklozenge \in \im \alpha_k$.
\end{itemize}
Observe that for any such $[\alpha;\aalpha]$, either it additionally satisfies
\begin{itemize}
	\item[(1d)] $\alpha(m-1) = k-1$
\end{itemize}
or there is a unique $(m';\mathbf{p'})$-cell $[\beta;\bbeta]$ in $X_1 \setminus X_0$ satisfying (1a-d) such that $[\alpha;\aalpha]$ is an $(m'-1)$-th horizontal face of $[\beta;\bbeta]$ (not necessarily of codimension 1).
\emph{e.g.}
\[
\left\{\begin{tikzpicture}[baseline = -3]
\filldraw
(0,0) circle [radius = 1pt]
(2,0) circle [radius = 1pt];
\filldraw[gray!50!white]
(3,0) circle [radius = 1pt];
\draw[->, gray!50!white] (0.1,0.1) .. controls (0.4,0.4) and (0.6,0.4) .. (0.9,0.1);
\draw[->] (0.1,-0.1) .. controls (0.4,-0.4) and (0.6,-0.4) .. (1,0) .. controls (1.4,-0.4) and (1.6,-0.4) .. (1.9,-0.1);
\draw[->, gray!50!white] (1.1,0.1) .. controls (1.4,0.4) and (1.6,0.4) .. (1.9,0.1);
\draw[->, double, gray!50!white] (0.5,0.25)--(0.5,-0.25);
\node[gray!50!white] at (1.5,0) {\rotatebox{270}{$\cong$}};
\draw[->, gray!50!white] (2.1,0)--(2.9,0);
\end{tikzpicture}\right\},
\hspace{5pt}
\left\{\begin{tikzpicture}[baseline = -3]
\filldraw
(0,0) circle [radius = 1pt]
(2,0) circle [radius = 1pt];
\filldraw[gray!50!white]
(3,0) circle [radius = 1pt];
\draw[->, yshift = 1pt] (0.1,0.1) .. controls (0.4,0.4) and (0.6,0.4) .. (1,0) .. controls (1.4,0.4) and (1.6,0.4) .. (1.9,0.1);
\draw[->, yshift = -1pt] (0.1,0.1) .. controls (0.4,0.4) and (0.6,0.4) .. (1,0) .. controls (1.4,-0.4) and (1.6,-0.4) .. (1.9,-0.1);
\draw[->, gray!50!white] (0.1,-0.1) .. controls (0.4,-0.4) and (0.6,-0.4) .. (0.9,-0.1);
\draw[->, double, gray!50!white] (0.5,0.25)--(0.5,-0.25);
\draw[->, double] (1.5,0.25)--(1.5,-0.25);
\draw[->, gray!50!white] (2.1,0)--(2.9,0);
\end{tikzpicture}\right\}
\hspace{5pt}\text {and}\hspace{5pt}
\left\{\begin{tikzpicture}[baseline = -3]
\filldraw
(0,0) circle [radius = 1pt]
(2,0) circle [radius = 1pt];
\filldraw[gray!50!white]
(3,0) circle [radius = 1pt];
\draw[->] (0.1,0.1) .. controls (0.4,0.4) and (0.6,0.4) .. (1,0) .. controls (1.4,-0.4) and (1.6,-0.4) .. (1.9,-0.1);
\filldraw[white] (1,-.05)--(0.95,0)--(1,.05)--(1.05,0)--cycle;
\draw[->] (0.1,-0.1) .. controls (0.4,-0.4) and (0.6,-0.4) .. (1,0) .. controls (1.4,0.4) and (1.6,0.4) .. (1.9,0.1);
\draw[->, double] (0.5,0.25)--(0.5,-0.25);
\draw[->, gray!50!white] (2.1,0)--(2.9,0);
\end{tikzpicture}\right\}
\]
are $1$st horizontal faces of
\[
\left\{\begin{tikzpicture}[baseline = -3]
\filldraw
(0,0) circle [radius = 1pt]
(1,0) circle [radius = 1pt]
(2,0) circle [radius = 1pt];
\filldraw[gray!50!white]
(3,0) circle [radius = 1pt];
\draw[->, gray!50!white] (0.1,0.1) .. controls (0.4,0.4) and (0.6,0.4) .. (0.9,0.1);
\draw[->] (0.1,-0.1) .. controls (0.4,-0.4) and (0.6,-0.4) .. (0.9,-0.1);
\draw[->, gray!50!white] (1.1,0.1) .. controls (1.4,0.4) and (1.6,0.4) .. (1.9,0.1);
\draw[->] (1.1,-0.1) .. controls (1.4,-0.4) and (1.6,-0.4) .. (1.9,-0.1);
\draw[->, double, gray!50!white] (0.5,0.25)--(0.5,-0.25);
\node[gray!50!white] at (1.5,0) {\rotatebox{270}{$\cong$}};
\draw[->, gray!50!white] (2.1,0)--(2.9,0);
\end{tikzpicture}\right\},
\hspace{5pt}
\left\{\begin{tikzpicture}[baseline = -3]
\filldraw
(0,0) circle [radius = 1pt]
(1,0) circle [radius = 1pt]
(2,0) circle [radius = 1pt];
\filldraw[gray!50!white]
(3,0) circle [radius = 1pt];
\draw[->] (0.1,0.1) .. controls (0.4,0.4) and (0.6,0.4) .. (0.9,0.1);
\draw[->, gray!50!white] (0.1,-0.1) .. controls (0.4,-0.4) and (0.6,-0.4) .. (0.9,-0.1);
\draw[->] (1.1,0.1) .. controls (1.4,0.4) and (1.6,0.4) .. (1.9,0.1);
\draw[->] (1.1,-0.1) .. controls (1.4,-0.4) and (1.6,-0.4) .. (1.9,-0.1);
\draw[->, double, gray!50!white] (0.5,0.25)--(0.5,-0.25);
\draw[->, double] (1.5,0.25)--(1.5,-0.25);
\draw[->, gray!50!white] (2.1,0)--(2.9,0);
\end{tikzpicture}\right\}
\hspace{5pt}\text {and}\hspace{5pt}
\left\{\begin{tikzpicture}[baseline = -3]
\filldraw
(0,0) circle [radius = 1pt]
(1,0) circle [radius = 1pt]
(2,0) circle [radius = 1pt];
\filldraw[gray!50!white]
(3,0) circle [radius = 1pt];
\draw[->] (0.1,0.1) .. controls (0.4,0.4) and (0.6,0.4) .. (0.9,0.1);
\draw[->] (0.1,-0.1) .. controls (0.4,-0.4) and (0.6,-0.4) .. (0.9,-0.1);
\draw[->] (1.1,0.1) .. controls (1.4,0.4) and (1.6,0.4) .. (1.9,0.1);
\draw[->] (1.1,-0.1) .. controls (1.4,-0.4) and (1.6,-0.4) .. (1.9,-0.1);
\draw[->, double] (0.5,0.25)--(0.5,-0.25);
\draw[->, double] (1.5,-0.25)--(1.5,0.25);
\draw[->, gray!50!white] (2.1,0)--(2.9,0);
\end{tikzpicture}\right\}
\]
respectively.

\begin{lemma}\label{step 1}
	The inclusion $X_0 \incl X_1$ is in $\celll(\H_h)$.
\end{lemma}
\begin{proof}
The discussion above shows that the set of non-degenerate cells in $X_1 \setminus X_0$ can be partitioned into subsets of the form
\[
\bigl\{
\text {$[\alpha;\aalpha]$ and all of its $(m-1)$-th horizontal faces}
\bigr\}
\]
where $[\alpha;\aalpha]$ is an $(m;\pp)$-cell satisfying (1a-d).
We prove that $X_1$ may be obtained from $X_0$ by gluing such $[\alpha;\aalpha]$ along the horizontal horn $\horn_h^{m-1}\mp$ in increasing order of $\dim\mp$.
Note that this horn is inner by (1a) and (1d).

Fix a non-degenerate $(m;\pp)$-cell $[\alpha;\aalpha]$ satisfying (1a-d).
We must show that any cell in the image of the composite
\[
\begin{tikzcd}
\horn_h^{m-1}\mp
\arrow [r, hook] &
\cell\mp
\arrow [r, "{[\alpha;\aalpha]}"] &
\Phi^k\nq
\end{tikzcd}
\]
is contained either in $X_0$ or in some cell that satisfies (1a-d) and has dimension strictly smaller than $\dim\mp$.
It suffices to check this for the generating faces of $\horn_h^{m-1}\mp$ described in \cref{horizontal horn description}:
\begin{itemize}
	\item $[\alpha;\aalpha] \cdot \delta_h^0$:
	\begin{itemize}
		\item is contained in $X_0$ if $\alpha(1) = k-1$; and
		\item satisfies (1a-d) otherwise;
	\end{itemize}
	\item $[\alpha;\aalpha] \cdot \delta_h^m$ is contained in $\cell\nq$;
	\item $[\alpha;\aalpha] \cdot \delta_h^{\ell;\langle\gamma,\gamma'\rangle}$ satisfies (1a-d) for any $\ell \le m-2$ and for any $\langle\gamma,\gamma'\rangle$;
	\item $[\alpha;\aalpha] \cdot \delta_v^{\ell;j}$ satisfies (1a-d) for any $\ell \neq m$ and for any $j \in [p_\ell]$; and
	\item $[\alpha;\aalpha] \cdot \delta_v^{m;j}$ is:
	\begin{itemize}
		\item contained in $\cell\nq$ if $\alpha_k^{-1}(\blacklozenge) = \{j\}$; and
		\item a (possibly trivial) degeneracy of some cell that satisfies (1a-d) otherwise.
	\end{itemize}
\end{itemize}
(By the trivial degeneracy of a cell, we mean the cell itself.
Also, the codomain of any $\delta$ appearing in the form $[\alpha;\aalpha] \cdot \delta$ in this proof is assumed to be $\mp$ so that $[\alpha;\aalpha] \cdot \delta$ is well-defined.)
This completes the proof.
\end{proof}

Next, let $X_2 \subset \Phi^k\nq$ be the cellular subset generated by $X_1$ and
those cells $[\alpha;\aalpha]$ such that $\delta^k : [n-1] \to [n]$ does not factor through $\alpha$.
Then clearly $X_2 \subset \Psi^k\nq$.
Note that a non-degenerate $(m;\pp)$-cell $[\alpha;\aalpha]$ in $\Phi^k\nq$ is contained in $X_2 \setminus X_1$ if and only if it satisfies:
\begin{itemize}
	\item[(2a)] $\alpha(0) \le k-1$;
	\item[(2b)] $\alpha(m) > k$;
	\item[(2c)] $\delta^k \neq \alpha \neq \id$; and
	\item[(2d)] $\blacklozenge \in \im \alpha_k$.
\end{itemize}
Observe that for any such $[\alpha;\aalpha]$, either it additionally satisfies
\begin{itemize}
	\item[(2e)] there exists $1 \le \ell_\alpha \le m-1$ such that $\alpha(\ell_\alpha) = k$
\end{itemize}
or there is a unique $(m';\mathbf{p'})$-cell $[\beta;\bbeta]$ in $X_2 \setminus X_1$ satisfying (2a-e) such that $[\alpha;\aalpha]$ is an $\ell_\beta$-th horizontal face of $[\beta;\bbeta]$.
\emph{e.g.}
\[
\left\{
\begin{tikzpicture}[baseline = -3]
\filldraw
(3,0) circle [radius = 1pt]
(1,0) circle [radius = 1pt];
\filldraw[gray!50!white]
(0,0) circle [radius = 1pt];
\draw[->, gray!50!white] (0.1,0.1) .. controls (0.4,0.4) and (0.6,0.4) .. (0.9,0.1);
\draw[->, gray!50!white] (0.1,-0.1) .. controls (0.4,-0.4) and (0.6,-0.4) .. (0.9,-0.1);
\draw[->, yshift = 1pt] (1.1,0.1) .. controls (1.4,0.4) and (1.6,0.4) .. (2,0) -- (2.9,0);
\draw[->, yshift = -1pt] (1.1,-0.1) .. controls (1.4,-0.4) and (1.6,-0.4) .. (2,0) -- (2.9,0);
\draw[->, double, gray!50!white] (0.5,0.25)--(0.5,-0.25);
\draw[->, double] (1.5,-0.25)--(1.5,0.25);
\end{tikzpicture}
\right\}, \hspace{5pt}
\left\{\begin{tikzpicture}[baseline = -3]
\filldraw
(0,0) circle [radius = 1pt]
(3,0) circle [radius = 1pt];
\draw[->] (0.1,0.1) .. controls (0.4,0.4) and (0.6,0.4) .. (1,0) .. controls (1.4,-0.4) and (1.6,-0.4) .. (2,0) -- (2.9,0);
\draw[->, gray!50!white] (0.1,-0.1) .. controls (0.4,-0.4) and (0.6,-0.4) .. (0.9,-0.1);
\draw[->, gray!50!white] (1.1,0.1) .. controls (1.4,0.4) and (1.6,0.4) .. (1.9,0.1);
\draw[->, double, gray!50!white] (0.5,0.25)--(0.5,-0.25);
\node[gray!50!white] at (1.5,0) {\rotatebox{270}{$\cong$}};
\end{tikzpicture}\right\},\hspace{5pt} \text {and} \hspace{5pt}
\left\{\begin{tikzpicture}[baseline = -3]
\filldraw
(0,0) circle [radius = 1pt]
(3,0) circle [radius = 1pt];
\draw[->] (0.1,0.1) .. controls (0.4,0.4) and (0.6,0.4) .. (1,0) .. controls (1.4,-0.4) and (1.6,-0.4) .. (1.95,-0.05)-- (2.9,-0.05);
\filldraw[white] (1,-.05)--(0.95,0)--(1,.05)--(1.05,0)--cycle;
\draw[->] (0.1,-0.1) .. controls (0.4,-0.4) and (0.6,-0.4) .. (1,0) .. controls (1.4,0.4) and (1.6,0.4) .. (1.95,0.05) -- (2.9,0.05);
\draw[->, double] (0.5,0.25)--(0.5,-0.25);
\end{tikzpicture}\right\}
\]
are $1$st horizontal faces of
\[
\left\{
\begin{tikzpicture}[baseline = -3]
\filldraw
(3,0) circle [radius = 1pt]
(1,0) circle [radius = 1pt]
(2,0) circle [radius = 1pt];
\filldraw[gray!50!white]
(0,0) circle [radius = 1pt];
\draw[->, gray!50!white] (0.1,0.1) .. controls (0.4,0.4) and (0.6,0.4) .. (0.9,0.1);
\draw[->, gray!50!white] (0.1,-0.1) .. controls (0.4,-0.4) and (0.6,-0.4) .. (0.9,-0.1);
\draw[->] (1.1,0.1) .. controls (1.4,0.4) and (1.6,0.4) .. (1.9,0.1);
\draw[->] (1.1,-0.1) .. controls (1.4,-0.4) and (1.6,-0.4) .. (1.9,-0.1);
\draw[->, double, gray!50!white] (0.5,0.25)--(0.5,-0.25);
\draw[->, double] (1.5,-0.25)--(1.5,0.25);
\draw[->] (2.1,0)--(2.9,0);
\end{tikzpicture}
\right\}, \hspace{5pt}
\left\{\begin{tikzpicture}[baseline = -3]
\filldraw
(0,0) circle [radius = 1pt]
(3,0) circle [radius = 1pt]
(2,0) circle [radius = 1pt];
\draw[->] (0.1,0.1) .. controls (0.4,0.4) and (0.6,0.4) .. (1,0) .. controls (1.4,-0.4) and (1.6,-0.4) .. (1.9,-0.1);
\draw[->, gray!50!white] (0.1,-0.1) .. controls (0.4,-0.4) and (0.6,-0.4) .. (0.9,-0.1);
\draw[->, gray!50!white] (1.1,0.1) .. controls (1.4,0.4) and (1.6,0.4) .. (1.9,0.1);
\draw[->, double, gray!50!white] (0.5,0.25)--(0.5,-0.25);
\node[gray!50!white] at (1.5,0) {\rotatebox{270}{$\cong$}};
\draw[->] (2.1,0)--(2.9,0);
\end{tikzpicture}\right\},\hspace{5pt} \text {and} \hspace{5pt}
\left\{\begin{tikzpicture}[baseline = -3]
\filldraw
(0,0) circle [radius = 1pt]
(3,0) circle [radius = 1pt]
(2,0) circle [radius = 1pt];
\draw[->] (0.1,0.1) .. controls (0.4,0.4) and (0.6,0.4) .. (1,0) .. controls (1.4,-0.4) and (1.6,-0.4) .. (1.9,-0.1);
\filldraw[white] (1,-.05)--(0.95,0)--(1,.05)--(1.05,0)--cycle;
\draw[->] (0.1,-0.1) .. controls (0.4,-0.4) and (0.6,-0.4) .. (1,0) .. controls (1.4,0.4) and (1.6,0.4) .. (1.9,0.1);
\draw[->, double] (0.5,0.25)--(0.5,-0.25);
\draw[->] (2.1,0)--(2.9,0);
\end{tikzpicture}\right\}
\]
respectively.
\begin{lemma}\label{step 2}
	The inclusion $X_1 \incl X_2$ is in $\celll(\H_h)$.
\end{lemma}
\begin{proof}
The discussion above shows that the set of non-degenerate cells in $X_2 \setminus X_1$ can be partitioned into subsets of the form
\[
\bigl\{\text {$[\alpha;\aalpha]$ and all of its $\ell_\alpha$-th horizontal faces} \bigr\}
\]
where $[\alpha;\aalpha]$ is an $(m;\pp)$-cell satisfying (2a-e).
We prove that $X_2$ may be obtained from $X_1$ by gluing such $[\alpha;\aalpha]$ along the horizontal horn $\horn_h^{\ell_\alpha}\mp$ in increasing order of $\dim \mp$.
Note that this horn is inner by (2e).

Fix a non-degenerate $(m;\pp)$-cell $[\alpha;\aalpha]$ satisfying (2a-e).
Similarly to the proof of \cref{step 1}, we must check that the following faces of $[\alpha;\aalpha]$ are contained either in $X_1$ or in some cell that satisfies (2a-e) and has dimension strictly smaller than $\dim\mp$:
\begin{itemize}
	\item $[\alpha;\aalpha] \cdot \delta_h^0$:
	\begin{itemize}
		\item is contained in $\cell\nq$ if $\alpha(1) = k$; and
		\item satisfies (2a-e) otherwise;
	\end{itemize}
	\item $[\alpha;\aalpha] \cdot \delta_h^m$:
	\begin{itemize}
		\item is contained in $X_1$ if $\alpha(m-1) = k$; and
		\item satisfies (2a-e) otherwise;
	\end{itemize}
	\item $[\alpha;\aalpha] \cdot \delta_h^{\ell;\langle\gamma,\gamma'\rangle}$ satisfies (2a-e) for any $\ell \neq \ell_\alpha$ and for any $\langle\gamma,\gamma'\rangle$;
	\item $[\alpha;\aalpha] \cdot \delta_v^{\ell;j}$ satisfies (2a-e) for any $\ell \neq \ell_\alpha$ and for any $j \in [p_\ell]$; and
	\item $[\alpha;\aalpha] \cdot \delta_v^{\ell_\alpha;j}$, for any $j \in [p_\ell]$, is:
	\begin{itemize}
		\item contained in $\cell\nq$ if $\alpha_k^{-1}(\blacklozenge) = \{j\}$; and
		\item a (possibly trivial) degeneracy of some cell that satisfies (2a-e) otherwise.
	\end{itemize}
\end{itemize}
This completes the proof.
\end{proof}

Now let $X_3 \subset \Phi^k\nq$ be the cellular subset generated by $X_2$ and those cells $[\alpha;\aalpha]$ such that $\alpha_{\ell}$ is not surjective for some $\ell \neq k$.
Then clearly $X_3 \subset \Psi^k\nq$.
Note that a non-degenerate $(m;\pp)$-cell $[\alpha;\aalpha]$ in $\Phi^k\nq$ is contained in $X_3 \setminus X_2$ if and only if it satisfies: 
\begin{itemize}
	\item [(3a)] $\alpha = \delta^k$ or $\alpha = \id$;
	\item [(3b)] $\blacklozenge\in\im\alpha_k$; and
	\item [(3c)] there exists $1 \le \ell \le n$ such that $\ell \neq k$ and $\alpha_\ell$ is not surjective.
\end{itemize}
Observe that if $[\delta^k;\aalpha]$ satisfies (3b) and (3c), then there is a unique cell $[\id;\bbeta]$ satisfying (3b) and (3c) such that $[\delta^k;\aalpha]$ is a $k$-th horizontal face of $[\id;\bbeta]$.
\emph{e.g.}
\[
\left\{
\begin{tikzpicture}[baseline = -3]
\filldraw
(0,0) circle [radius = 1pt]
(1,0) circle [radius = 1pt]
(3,0) circle [radius = 1pt];
\draw[->] (0.1,0.1) .. controls (0.4,0.4) and (0.6,0.4) .. (0.9,0.1);
\draw[->, gray!50!white] (0.1,-0.1) .. controls (0.4,-0.4) and (0.6,-0.4) .. (0.9,-0.1);
\draw[->, gray!50!white] (1.1,0.1) .. controls (1.4,0.4) and (1.6,0.4) .. (1.9,0.1);
\draw[->] (1.1,-0.1) .. controls (1.4,-0.4) and (1.6,-0.4) .. (2,0) -- (2.9,0);
\draw[->, double, gray!50!white] (0.5,0.25)--(0.5,-0.25);
\node[gray!50!white] at (1.5,0) {\rotatebox{270}{$\cong$}};
\end{tikzpicture}\right\}
\hspace{5pt} \text {and} \hspace{5pt}
\left\{
\begin{tikzpicture}[baseline = -3]
\filldraw
(0,0) circle [radius = 1pt]
(1,0) circle [radius = 1pt]
(3,0) circle [radius = 1pt];
\draw[->, gray!50!white] (0.1,0.1) .. controls (0.4,0.4) and (0.6,0.4) .. (0.9,0.1);
\draw[->] (0.1,-0.1) .. controls (0.4,-0.4) and (0.6,-0.4) .. (0.9,-0.1);
\draw[->, yshift = 1pt] (1.1,0.1) .. controls (1.4,0.4) and (1.6,0.4) .. (2,0) -- (2.9,0);
\draw[->, yshift = -1pt] (1.1,-0.1) .. controls (1.4,-0.4) and (1.6,-0.4) .. (2,0) -- (2.9,0);
\draw[->, double, gray!50!white] (0.5,0.25)--(0.5,-0.25);
\draw[->, double] (1.5,-0.25)--(1.5,0.25);
\end{tikzpicture}\right\}
\]
are $2$nd horizontal faces of
\[
\left\{
\begin{tikzpicture}[baseline = -3]
\filldraw
(0,0) circle [radius = 1pt]
(1,0) circle [radius = 1pt]
(2,0) circle [radius = 1pt]
(3,0) circle [radius = 1pt];
\draw[->] (0.1,0.1) .. controls (0.4,0.4) and (0.6,0.4) .. (0.9,0.1);
\draw[->, gray!50!white] (0.1,-0.1) .. controls (0.4,-0.4) and (0.6,-0.4) .. (0.9,-0.1);
\draw[->, gray!50!white] (1.1,0.1) .. controls (1.4,0.4) and (1.6,0.4) .. (1.9,0.1);
\draw[->] (1.1,-0.1) .. controls (1.4,-0.4) and (1.6,-0.4) .. (1.9,-0.1);
\draw[->, double, gray!50!white] (0.5,0.25)--(0.5,-0.25);
\node[gray!50!white] at (1.5,0) {\rotatebox{270}{$\cong$}};
\draw[->] (2.1,0)--(2.9,0);
\end{tikzpicture}\right\}
\hspace{5pt} \text {and} \hspace{5pt}
\left\{
\begin{tikzpicture}[baseline = -3]
\filldraw
(0,0) circle [radius = 1pt]
(1,0) circle [radius = 1pt]
(2,0) circle [radius = 1pt]
(3,0) circle [radius = 1pt];
\draw[->, gray!50!white] (0.1,0.1) .. controls (0.4,0.4) and (0.6,0.4) .. (0.9,0.1);
\draw[->] (0.1,-0.1) .. controls (0.4,-0.4) and (0.6,-0.4) .. (0.9,-0.1);
\draw[->] (1.1,0.1) .. controls (1.4,0.4) and (1.6,0.4) .. (1.9,0.1);
\draw[->] (1.1,-0.1) .. controls (1.4,-0.4) and (1.6,-0.4) .. (1.9,-0.1);
\draw[->, double, gray!50!white] (0.5,0.25)--(0.5,-0.25);
\draw[->, double] (1.5,-0.25)--(1.5,0.25);
\draw[->] (2.1,0)--(2.9,0);
\end{tikzpicture}\right\}
\]
respectively.
\begin{lemma}\label{step 3}
	The inclusion $X_2 \incl X_3$ is in $\celll(\H_h)$.
\end{lemma}
\begin{proof}
The discussion above shows that the set of non-degenerate cells in $X_3 \setminus X_2$ can be partitioned into subsets of the form
\[
\bigl\{\text {$[\id;\aalpha]$ and all of its $k$-th horizontal faces}\bigr\}
\]
where $[\id;\aalpha]$ is an $(n;\pp)$-cell satisfying (3b) and (3c).
We prove that $X_3$ may be obtained from $X_2$ by gluing such $[\id;\aalpha]$ along the horizontal horn $\horn_h^k\np$ in increasing order of $\dim\np$.
Note that this horn is inner since we are assuming $1 \le k \le n-1$.

Fix a non-degenerate $(n;\pp)$-cell $[\id;\aalpha]$ satisfying (3b) and (3c).
We must check that the following faces of $[\id;\aalpha]$ are contained either in $X_2$ or some $[\id;\bbeta]$ that satisfies (3b) and (3c) and has dimension strictly smaller than $\dim\np$:
\begin{itemize}
	\item $[\id;\aalpha] \cdot \delta_h^0$ is contained in $X_2$;
	\item $[\id;\aalpha] \cdot \delta_h^n$ is contained in $X_2$;
	\item $[\id;\aalpha] \cdot \delta_h^{\ell;\langle\gamma,\gamma'\rangle}$ is contained in $X_2$ for any $\ell \neq k$ and for any $\langle\gamma,\gamma'\rangle$;
	\item $[\id;\aalpha] \cdot \delta_v^{\ell;j}$ satisfies (3a-c) for any $\ell \neq k$ and for any $j \in [p_\ell]$; and
	\item $[\id;\aalpha] \cdot \delta_v^{k;j}$ is:
	\begin{itemize}
		\item contained in $\cell\nq$ if $\alpha_k^{-1}(\blacklozenge) = \{j\}$; and
		\item a (possibly trivial) degeneracy of some cell that satisfies (3a-c) otherwise.
	\end{itemize}
\end{itemize}
This completes the proof.
\end{proof}

Observe that the non-degenerate cells in $\Psi^k\nq \setminus X_3$ are precisely those $
[\delta^k;\aalpha] : \cell[n-1;\pp] \to \Phi^k\nq$ such that:
\begin{itemize}
	\item[(4a)] $\alpha_{\ell} = \id$ for $k \neq \ell \neq k+1$;
	\item[(4b)] $\alpha_{k+1}$ is surjective;
	\item[(4c)] $\blacklozenge \in \im \alpha_k$; and
	\item[(4d)] $\langle \alpha_k,\alpha_{k+1} \rangle : \Delta[p_k] \to J \times \Delta[q_{k+1}]$ is non-degenerate.
\end{itemize}
For our example $\Psi^2[3;1,0,0]$, these faces include
\[
\left\{\begin{tikzpicture}[baseline = -3]
\filldraw
(0,0) circle [radius = 1pt]
(1,0) circle [radius = 1pt]
(3,0) circle [radius = 1pt];
\draw[->] (0.1,0.1) .. controls (0.4,0.4) and (0.6,0.4) .. (0.9,0.1);
\draw[->] (0.1,-0.1) .. controls (0.4,-0.4) and (0.6,-0.4) .. (0.9,-0.1);
\draw[->, gray!50!white] (1.1,0.1) .. controls (1.4,0.4) and (1.6,0.4) .. (1.9,0.1);
\draw[->] (1.1,-0.1) .. controls (1.4,-0.4) and (1.6,-0.4) .. (2,0) -- (2.9,0);
\draw[->, double] (0.5,0.25)--(0.5,-0.25);
\node[gray!50!white] at (1.5,0) {\rotatebox{270}{$\cong$}};
\end{tikzpicture}\right\}
\hspace{5pt} \text {and} \hspace{5pt}
\left\{\begin{tikzpicture}[baseline = -3]
\filldraw
(0,0) circle [radius = 1pt]
(1,0) circle [radius = 1pt]
(3,0) circle [radius = 1pt];
\draw[->] (0.1,0.1) .. controls (0.4,0.4) and (0.6,0.4) .. (0.9,0.1);
\draw[->] (0.1,-0.1) .. controls (0.4,-0.4) and (0.6,-0.4) .. (0.9,-0.1);
\draw[->, yshift = 1pt] (1.1,0.1) .. controls (1.4,0.4) and (1.6,0.4) .. (2,0) -- (2.9,0);
\draw[->, yshift = -1pt] (1.1,-0.1) .. controls (1.4,-0.4) and (1.6,-0.4) .. (2,0) -- (2.9,0);
\draw[->, double] (0.5,0.25)--(0.5,-0.25);
\draw[->, double] (1.5,-0.25)--(1.5,0.25);
\end{tikzpicture}\right\}.
\]
In fact, there is a map
\[
\Phi^2[2;1,0] \to \Phi^3[2;1,0,0]
\]
which looks like
\[
\left\{\begin{tikzpicture}[baseline = -3]
\filldraw
(0,0) circle [radius = 1pt]
(1,0) circle [radius = 1pt]
(3,0) circle [radius = 1pt];
\draw[->] (0.1,0.1) .. controls (0.4,0.4) and (0.6,0.4) .. (0.9,0.1);
\draw[->] (0.1,-0.1) .. controls (0.4,-0.4) and (0.6,-0.4) .. (0.9,-0.1);
\draw[->, yshift = 1pt] (1.1,0.1) .. controls (1.4,0.4) and (1.6,0.4) .. (2,0) -- (2.9,0);
\draw[->, yshift = -1pt] (1.1,-0.1) .. controls (1.4,-0.4) and (1.6,-0.4) .. (2,0) -- (2.9,0);
\draw[->, double] (0.5,0.25)--(0.5,-0.25);
\node at (1.5,0) {\rotatebox{270}{$\cong$}};
\end{tikzpicture}\right\}
\]
and $\Psi^2[3;1,0,0] \setminus X_3$ is precisely the image of $\Phi^2[2;1,0] \setminus \Psi^2[2;1,0]$ under this map.

The above observation can be generalised to include all cases where $q_{k+1} = 0$. However it does not hold if $q_{k+1} \ge 1$, \emph{e.g.}~$\nq = [2;0,2]$ (with $k = 1$) in which case $\Phi^k\nq$ looks like
\[
\Phi^1[2;0,2] = \left\{
\begin{tikzpicture}[baseline = -3]
\filldraw
(0,0) circle [radius = 1pt]
(1,0) circle [radius = 1pt]
(2,0) circle [radius = 1pt];
\draw[->] (0.1,0.1) .. controls (0.4,0.4) and (0.6,0.4) .. (0.9,0.1);
\draw[->] (0.1,-0.1) .. controls (0.4,-0.4) and (0.6,-0.4) .. (0.9,-0.1);
\draw[->] (1.1,0) -- (1.9,0);
\draw[->] (1.1,0.1) .. controls (1.4,0.6) and (1.6,0.6) .. (1.9,0.1);
\draw[->] (1.1,-0.1) .. controls (1.4,-0.6) and (1.6,-0.6) .. (1.9,-0.1);
\draw[->, double] (1.5,0.4)--(1.5,0.05);
\draw[->, double] (1.5,-0.05)--(1.5,-0.4);
\node at (0.5,0) {\rotatebox{270}{$\cong$}};
\node[scale = 0.7] at (0.5, 0.5) {$\lozenge$};
\node[scale = 0.7] at (0.5, -0.5) {$\blacklozenge$};
\end{tikzpicture}\right\}.
\]\
In this case, given a non-degenerate cell $[\delta^k;\aalpha]$ satisfying (4a-d), let $i_\aalpha = \alpha_{k+1}\bigl(\min\bigl(\alpha_k^{-1}(\blacklozenge)\bigr)\bigr)$ and
\[
\begingroup
\renewcommand{\arraystretch}{1.5}
j_\aalpha = \left\{\begin{array}{ll}
\min\bigl(\alpha_{k+1}^{-1}(i_\aalpha)\bigr) & \text {if $i_\aalpha \ge 1$,}\\
\max\bigl(\alpha_{k+1}^{-1}(0)\bigr) & \text {if $i_\aalpha = 0$.}
\end{array}\right.
\endgroup
\]
Then the cell $[\delta^k;\aalpha]$ either satisfies
\begin{itemize}
	\item [(4e)] $\alpha_k(j_\aalpha) = \lozenge$
\end{itemize}
or there is a unique $(n-1;\mathbf{p'})$-cell $[\delta^k;\bbeta]$ in $\Psi^k\nq \setminus X_3$ satisfying (4a-e) such that $[\delta^k;\aalpha]$ is the $(k;j_\bbeta)$-th vertical hyperface of $[\delta^k;\bbeta]$.
\emph{e.g.}
\[
\left\{\begin{tikzpicture}[baseline = -3]
\filldraw
(0,0) circle [radius = 1pt]
(2,0) circle [radius = 1pt];
\draw[->, gray!50!white] (0.1,0.1) .. controls (0.4,0.4) and (0.6,0.4) .. (0.9,0.1);
\draw[->, yshift = 0.5pt] (0.1,-0.1) .. controls (0.4,-0.4) and (0.6,-0.4) .. (1,0) .. controls (1.4,0.6) and (1.6,0.6) .. (1.9,0.1);
\draw[->, yshift = -1pt] (0.1,-0.1) .. controls (0.4,-0.4) and (0.6,-0.4) .. (1,0) -- (1.9,0);
\draw[->, yshift = -2.5pt] (0.1,-0.1) .. controls (0.4,-0.4) and (0.6,-0.4) .. (1,0) .. controls (1.4,-0.6) and (1.6,-0.6) .. (1.9,-0.1);
\draw[->, double, yshift = -0.5pt] (1.5,0.4)--(1.5,0.05);
\draw[->, double, yshift = -1.5pt] (1.5,-0.05)--(1.5,-0.4);
\node[gray!50!white] at (0.5,0) {\rotatebox{270}{$\cong$}};
\end{tikzpicture}\right\}, \hspace{5pt}
\left\{\begin{tikzpicture}[baseline = -3]
\filldraw
(0,0) circle [radius = 1pt]
(2,0) circle [radius = 1pt];
\draw[->, yshift = 2.25pt] (0.1,0.1) .. controls (0.4,0.4) and (0.6,0.4) .. (1,0) .. controls (1.4,0.6) and (1.6,0.6) .. (1.9,0.1);
\draw[->, yshift = -0.75pt] (0.1,-0.1) .. controls (0.4,-0.4) and (0.6,-0.4) .. (1,0) -- (1.9,0);
\draw[->, yshift = -2.25pt] (0.1,-0.1) .. controls (0.4,-0.4) and (0.6,-0.4) .. (1,0) .. controls (1.4,-0.6) and (1.6,-0.6) .. (1.9,-0.1);
\draw[->, double, yshift = 1.5pt] (1.5,0.4)--(1.5,0.05);
\draw[->, double, yshift = -1.5pt] (1.5,-0.05)--(1.5,-0.4);
\end{tikzpicture}\right\},\hspace{5pt} \text {and} \hspace{5pt}
\left\{\begin{tikzpicture}[baseline = -3]
\filldraw
(0,0) circle [radius = 1pt]
(2,0) circle [radius = 1pt];
\draw[->, yshift = 2.25pt] (0.1,0.1) .. controls (0.4,0.4) and (0.6,0.4) .. (1,0) .. controls (1.4,0.6) and (1.6,0.6) .. (1.9,0.1);
\draw[->, yshift = 0.75pt] (0.1,0.1) .. controls (0.4,0.4) and (0.6,0.4) .. (1,0) -- (1.9,0);
\draw[->, yshift = -2.25pt] (0.1,-0.1) .. controls (0.4,-0.4) and (0.6,-0.4) .. (1,0) .. controls (1.4,-0.6) and (1.6,-0.6) .. (1.9,-0.1);
\draw[->, double, yshift = 1.5pt] (1.5,0.4)--(1.5,0.05);
\draw[->, double, yshift = -1.5pt] (1.5,-0.05)--(1.5,-0.4);
\end{tikzpicture}\right\}
\]
have $i_\aalpha = 0,1,$ and $2$ respectively, and they are moreover the appropriate vertical hyperfaces of
\[
\left\{\begin{tikzpicture}[baseline = -3]
\filldraw
(0,0) circle [radius = 1pt]
(2,0) circle [radius = 1pt];
\draw[->, yshift = 0.5pt] (0.1,-0.1) .. controls (0.4,-0.4) and (0.6,-0.4) .. (1,0) .. controls (1.4,0.6) and (1.6,0.6) .. (1.9,0.1);
\filldraw[white] (0.03,0.04) +(1,-.05)--+(0.95,0)--+(1,.05)--+(1.05,0)--cycle;
\draw[->] (0.1,0.1) .. controls (0.4,0.4) and (0.6,0.4) .. (1.05,0.025) .. controls (1.4,0.55) and (1.6,0.55) .. (1.9,0.05);
\draw[->, yshift = -1pt] (0.1,-0.1) .. controls (0.4,-0.4) and (0.6,-0.4) .. (1,0) -- (1.9,0);
\draw[->, yshift = -2.5pt] (0.1,-0.1) .. controls (0.4,-0.4) and (0.6,-0.4) .. (1,0) .. controls (1.4,-0.6) and (1.6,-0.6) .. (1.9,-0.1);
\draw[->, double] (0.5,-0.25)--(0.5,0.25);
\draw[->, double, yshift = -0.5pt] (1.5,0.4)--(1.5,0.05);
\draw[->, double, yshift = -1.5pt] (1.5,-0.05)--(1.5,-0.4);
\end{tikzpicture}\right\}
, \hspace{5pt}
\left\{\begin{tikzpicture}[baseline = -3]
\filldraw
(0,0) circle [radius = 1pt]
(2,0) circle [radius = 1pt];
\draw[->, yshift = 2.25pt] (0.1,0.1) .. controls (0.4,0.4) and (0.6,0.4) .. (1,0) .. controls (1.4,0.6) and (1.6,0.6) .. (1.9,0.1);
\draw[->, yshift = 0.75pt] (0.1,0.1) .. controls (0.4,0.4) and (0.6,0.4) .. (1,0) -- (1.9,0);
\draw[->, yshift = -0.75pt] (0.1,-0.1) .. controls (0.4,-0.4) and (0.6,-0.4) .. (1,0) -- (1.9,0);
\draw[->, yshift = -2.25pt] (0.1,-0.1) .. controls (0.4,-0.4) and (0.6,-0.4) .. (1,0) .. controls (1.4,-0.6) and (1.6,-0.6) .. (1.9,-0.1);
\draw[->, double] (0.5,0.25)--(0.5,-0.25);
\draw[->, double, yshift = 1.5pt] (1.5,0.4)--(1.5,0.05);
\draw[->, double, yshift = -1.5pt] (1.5,-0.05)--(1.5,-0.4);
\end{tikzpicture}\right\},\hspace{5pt} \text {and} \hspace{5pt}
\left\{\begin{tikzpicture}[baseline = -3]
\filldraw
(0,0) circle [radius = 1pt]
(2,0) circle [radius = 1pt];
\draw[->, yshift = 2.25pt] (0.1,0.1) .. controls (0.4,0.4) and (0.6,0.4) .. (1,0) .. controls (1.4,0.6) and (1.6,0.6) .. (1.9,0.1);
\draw[->, yshift = 0.75pt] (0.1,0.1) .. controls (0.4,0.4) and (0.6,0.4) .. (1,0) -- (1.9,0);
\draw[->, yshift = -0.75pt] (0.1,0.1) .. controls (0.4,0.4) and (0.6,0.4) .. (1,0) .. controls (1.4,-0.6) and (1.6,-0.6) .. (1.9,-0.1);
\draw[->, yshift = -2.25pt] (0.1,-0.1) .. controls (0.4,-0.4) and (0.6,-0.4) .. (1,0) .. controls (1.4,-0.6) and (1.6,-0.6) .. (1.9,-0.1);
\draw[->, double] (0.5,0.25)--(0.5,-0.25);
\draw[->, double, yshift = 1.5pt] (1.5,0.4)--(1.5,0.05);
\draw[->, double, yshift = -1.5pt] (1.5,-0.05)--(1.5,-0.4);
\end{tikzpicture}\right\}
\]
respectively.

The motivation behind the definitions of $i_\aalpha$ and $j_\aalpha$ is as follows.
Ideally, we would like to simply say ``the first 1-cell involving $\blacklozenge$ is preceded by an otherwise identical 1-cell that involves $\lozenge$'' in (4e) and use this extra $\lozenge$ to identify the interior/face pairs for the inner horns to be filled.
However, this horn is outer if (and only if) $i_{\aalpha} = 0$.
Thus we define $j_{\aalpha}$ differently in this case so that (4e) says ``the \emph{last} 1-cell of the form $\left\{\begin{tikzpicture}[baseline = -3]
\filldraw
(0,0) circle [radius = 1pt]
(2,0) circle [radius = 1pt];
\draw[->, gray!50!white] (0.1,0.1) .. controls (0.4,0.4) and (0.6,0.4) .. (0.9,0.1);
\draw[->, yshift = 1.5pt] (0.1,-0.1) .. controls (0.4,-0.4) and (0.6,-0.4) .. (1,0) .. controls (1.4,0.6) and (1.6,0.6) .. (1.9,0.1);
\draw[->, gray!50!white] (1.1,0) -- (1.9,0);
\draw[->, gray!50!white, yshift = -1.5pt] (1.1,-0.1) .. controls (1.4,-0.6) and (1.6,-0.6) .. (1.9,-0.1);
\draw[->, double, gray!50!white] (1.5,0.4)--(1.5,0.05);
\draw[->, double, gray!50!white] (1.5,-0.05)--(1.5,-0.4);
\node[gray!50!white] at (0.5,0) {\rotatebox{270}{$\cong$}};
\end{tikzpicture}\right\}$ is \emph{followed} by one of the form $\left\{\begin{tikzpicture}[baseline = -3]
\filldraw
(0,0) circle [radius = 1pt]
(2,0) circle [radius = 1pt];
\draw[->, gray!50!white] (0.1,-0.1) .. controls (0.4,-0.4) and (0.6,-0.4) .. (0.9,-0.1);
\draw[->, yshift = 1.5pt] (0.1,0.1) .. controls (0.4,0.4) and (0.6,0.4) .. (1,0) .. controls (1.4,0.6) and (1.6,0.6) .. (1.9,0.1);
\draw[->, gray!50!white] (1.1,0) -- (1.9,0);
\draw[->, gray!50!white, yshift = -1.5pt] (1.1,-0.1) .. controls (1.4,-0.6) and (1.6,-0.6) .. (1.9,-0.1);
\draw[->, double, gray!50!white] (1.5,0.4)--(1.5,0.05);
\draw[->, double, gray!50!white] (1.5,-0.05)--(1.5,-0.4);
\node[gray!50!white] at (0.5,0) {\rotatebox{270}{$\cong$}};
\end{tikzpicture}\right\}$'' instead.
\begin{lemma}\label{step 4}
    The inclusion $X_3 \incl \Phi^k\nq$ is in $\celll(\H_h)$.
    The inclusion $X_3 \incl \Psi^k\nq$ is:
    \begin{itemize}
        \item a pushout of $\Psi^\ell[n-1;\pp] \incl \Phi^\ell[n-1;\pp]$ for some $[n-1;\pp] \in \cell$ with $\dim[n-1;\pp] < \dim\nq$ if $q_{k+1} = 0$; and
        \item in $\celll(\H_v)$ if $q_{k+1} \ge 1$.
    \end{itemize}
\end{lemma}
\begin{proof}
For the inclusion $X_3 \incl \Phi^k\nq$, we can simply continue gluing the remaining cells $[\id;\aalpha] : \cell\np \to \Phi^k\nq$ satisfying (3b) (but not (3c)) along $\horn_h^k\np$ in increasing order of $\dim\np$.

Consider the inclusion $X_3 \incl \Psi^k\nq$.
For the case $q_{k+1} = 0$, recall that the functor $\wreath \to \hatdelta$ is a (split) cartesian fibration.
Thus there is a cartesian lift of the map $\delta^k :\Delta[n-1]\to\Delta[n]$ at the object
\[
\bigl(\Delta[q_1], \dots, \Delta[q_{k-1}], J, \Delta[q_{k+1}], \dots, \Delta[q_n]\bigr) \in \hatdelta^n \simeq \bigl(\wreath\bigr)_{\Delta[n]}.
\]
Applying the box product functor $\square$ to this lift yields a map
\[
[\delta^k;\id,\dots,!,\dots,\id] : \Phi^k[n-1;\pp] \to \Phi^k\nq
\]
where $\pp = (q_1,\dots,q_{k-1},0,q_{k+2},\dots,q_n)$.
This map factors through $\Psi^k\nq$ because its image is generated by the cells of the form $[\delta^k;\aalpha]$.
Moreover, one can check by comparing (4a-d) and (\textasteriskcentered\textasteriskcentered) (the latter of which appeared in the second paragraph after the proof of \cref{vertical equivalence extensions}) that this map fits into the following gluing square:
\[
\begin{tikzcd}[row sep = large]
\Psi^k[n-1;\pp] \arrow [r, hook, "\subset"] \arrow [d] \glue & \Phi^k[n-1;\pp] \arrow [d,  "{[\delta^k;\id,\dots,!,\dots,\id]}"]\\
X_3 \arrow [r, hook, "\subset"] & \Psi^k\nq
\end{tikzcd}
\]
This completes the proof for the first case.

Next consider the case $q_{k+1} \ge 2$.
The discussion before \cref{step 4} shows that the set of non-degenerate cells in $\Psi^k\nq \setminus X_3$ can be partitioned into subsets of the form
\[
\bigl\{[\delta^k;\aalpha], [\delta^k;\aalpha] \cdot \delta_v^{k;j_\aalpha}\bigr\}
\]
where $[\delta^k;\aalpha]$ is an $(n-1;\pp)$-cell satisfying (4a-e).
We prove that $\Psi^k\nq$ may be obtained from $X_3$ by gluing such $[\delta^k\aalpha]$ along the vertical horn $\horn_v^{k;j_\aalpha}[n-1;\pp]$ in lexicographically increasing order of $\dim[n-1;\pp]$ and $|\alpha_k^{-1}(\blacklozenge)|$.
Note that this horn is inner by the definition of $j_\aalpha$.

Fix a non-degenerate $(n-1;\pp)$-cell $[\delta^k;\aalpha]$ satisfying (4a-e).
We must check that the appropriate faces of $[\delta^k;\aalpha]$ are contained either in $X_3$ or in some $(n-1;\mathbf{p'})$-cell $[\delta^k;\bbeta]$ satisfying (4a-e) such that:
\begin{itemize}
	\item $\dim[n-1;\mathbf{p'}] < \dim[n-1;\pp]$; or
	\item $\dim[n-1;\mathbf{p'}] \le \dim[n-1;\pp]$ and $|\beta_k^{-1}(\blacklozenge)| < |\alpha_k^{-1}(\blacklozenge)|$.
\end{itemize}
If $i_\aalpha \ge 1$:
\begin{itemize}
	\item any horizontal hyperface of $[\delta^k;\aalpha]$ is contained in $X_2$;
	\item $[\delta^k;\aalpha] \cdot \delta_v^{\ell;j}$ is contained in $X_3$ for any $\ell \neq k$ and for any $j \in [p_\ell]$;
	\item $[\delta^k;\aalpha] \cdot \delta_v^{k;j}$, where $j_\aalpha \neq j \neq j_\aalpha+1$, is:
	\begin{itemize}
		\item contained in $X_3$ if $\alpha_{k+1} \cdot \delta^j$ is not surjective; and
		\item a (possibly trivial) degeneracy of some cell that satisfies (4a-e) otherwise; and
	\end{itemize}
	\item $[\delta^k;\aalpha] \cdot \delta_v^{k;j_\aalpha+1}$ is:
	\begin{itemize}
		\item contained in $\cell\nq$ if $\alpha_k(j) = \lozenge$ for all $j \neq j_{\aalpha}+1$; and
		\item a (possibly trivial) degeneracy of some $(n-1;\mathbf{p'})$-cell $[\delta^k;\bbeta]$ that satisfies (4a-d) otherwise.
	\end{itemize}
\end{itemize}
Note in the last clause, the cell $[\delta^k;\bbeta]$ may not satisfy (4e).
However, at least we know $\dim[n-1;\mathbf{p'}] < \dim[n-1;\pp]$ and $|\beta_k^{-1}(\blacklozenge)|<|\alpha_k^{-1}(\blacklozenge)|$.
Hence if $[\delta^k;\boldsymbol{\gamma}]$ is an $(n-1;\mathbf{p''})$-cell satisfying (4a-e) such that $
[\delta^k;\bbeta] = [\delta^k;\boldsymbol{\gamma}] \cdot \delta_v^{k;j_{\boldsymbol{\gamma}}}$ then
\[
\dim[n-1;\mathbf{p''}] = \dim[n-1;\mathbf{p'}] + 1 \le \dim[n-1;\pp]
\]
and
\[
|\gamma_k^{-1}(\blacklozenge)| = |\beta_k^{-1}(\blacklozenge)|<|\alpha_k^{-1}(\blacklozenge)|.
\]
A similar analysis can be done for the case $i_\aalpha = 0$ too, and this completes the proof.
\end{proof}

\section{Alternative horizontal horns}\label{alternative horns}
We now consider a slightly different set of horn inclusions.
\begin{definition}
	Given $\nq \in \cell$, $1 \le k \le n-1$ and a $(q_k,q_{k+1})$-shuffle $\langle \alpha, \alpha' \rangle$, let $\horn_h^{k;\langle\alpha,\alpha'\rangle}\nq \subset \cell\nq$ denote the cellular subset generated by all hyperfaces of $\cell\nq$ except for $\delta_h^{k;\langle\alpha,\alpha'\rangle}$.
\end{definition}

We write $\H'_h$ for the set of all such \emph{alternative inner horizontal horn inclusions} $\horn_h^{k;\langle\alpha,\alpha'\rangle}\nq \incl \cell\nq$.
We prove $\H_h \subset \celll(\H'_h \cup \H_v)$ and that $\H'_h$ is contained in the trivial cofibrations.

\subsection{Oury's horn inclusions can be obtained from the alternative ones}
The purpose of this subsection is to prove the following lemma.
\begin{lemma}\label{Oury horizontal horns from alternative ones}
	Every map in $\H_h$ is contained in $\celll(\H'_h \cup \H_v)$.
\end{lemma}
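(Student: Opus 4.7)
The plan is to interpolate $\horn_h^k\nq \incl \cell\nq$ by adjoining the $k$-th horizontal hyperfaces one at a time. Fix a linear extension $\langle\alpha_0,\alpha'_0\rangle,\dots,\langle\alpha_N,\alpha'_N\rangle$ of the partial order $\le$ on $\shuffle(q_k,q_{k+1})$ in which the (unique) maximum shuffle appears last, set $X_{-1} = \horn_h^k\nq$, and define $X_i = X_{i-1}\cup\overline{\{\delta_h^{k;\langle\alpha_i,\alpha'_i\rangle}\}}$ for $0\le i\le N$. Since $\horn_h^k\nq$ already contains every hyperface of $\cell\nq$ outside the $k$-th horizontal ones, $X_N = \cell\nq$, so it suffices to exhibit each inclusion $X_{i-1}\incl X_i$ as an element of $\celll(\H'_h\cup\H_v)$.

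The terminal step $i=N$ is immediate: by construction $X_{N-1}$ is generated by every hyperface of $\cell\nq$ except the single hyperface $\delta_h^{k;\langle\alpha_N,\alpha'_N\rangle}$, so $X_{N-1}=\horn_h^{k;\langle\alpha_N,\alpha'_N\rangle}\nq$ and the inclusion is itself a member of $\H'_h$. For $i<N$, the inclusion $X_{i-1}\incl X_i$ is a pushout of $Z_i\incl\cell[n-1;\pp]$ (with $\pp=(q_1,\dots,q_{k-1},q_k+q_{k+1},q_{k+2},\dots,q_n)$) coming from the gluing square at $\delta_h^{k;\langle\alpha_i,\alpha'_i\rangle}$. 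Because of the linear-extension choice, the $k$-th horizontal hyperfaces present in $X_{i-1}$ are precisely the $\delta_h^{k;\langle\beta,\beta'\rangle}$ with $\langle\beta,\beta'\rangle\ngeq\langle\alpha_i,\alpha'_i\rangle$. I would then recapitulate the case analyses from the proof of \cref{part 3} (Claims 0--4 there, together with \cref{points}) to identify $Z_i$ as the cellular subset of $\cell[n-1;\pp]$ generated by all its hyperfaces except precisely $\delta_v^{k;j}[n-1;\pp]$ for $j\in\ulcorner\langle\alpha_i,\alpha'_i\rangle$: Claims 1 and 3 account for the non-$k$-vertical hyperfaces, Claim 2 and Claim 4 together with \cref{points} cover all $\delta_v^{k;j}[n-1;\pp]$ with $j\notin\ulcorner\langle\alpha_i,\alpha'_i\rangle$, and Claim 0 handles the outer hyperfaces. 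Since $\langle\alpha_i,\alpha'_i\rangle$ is not the maximum, \cref{immediate predecessor of shuffle} gives $\ulcorner\langle\alpha_i,\alpha'_i\rangle\ne\varnothing$, so the set of missing hyperfaces is non-empty.

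What remains is an auxiliary lemma: \emph{for every $\mp\in\cell$, every $1\le k\le m$, and every non-empty set $M$ of inner vertical hyperfaces of $\cell\mp$ at level $k$, the inclusion $Y_M\incl\cell\mp$ of the cellular subset generated by all hyperfaces not in $M$ lies in $\celll(\H_v)$}. I would prove this by induction on $|M|$. For $|M|=1$ we have $Y_M=\horn_v^{k;j}\mp\in\H_v$ for the unique $j$ with $\delta_v^{k;j}\in M$. For $|M|\ge 2$, pick any $\delta_v^{k;j}\in M$ and factor $Y_M\incl\cell\mp$ through $Y_{M\setminus\{\delta_v^{k;j}\}}$: the second factor lies in $\celll(\H_v)$ by induction, and the first is a pushout of an inclusion $Z'\incl\cell[m;p_1,\dots,p_k-1,\dots,p_m]$. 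A short bookkeeping computation (applying the index shift $i\mapsto i$ for $i<j$ and $i\mapsto i-1$ for $i>j$ to the hyperfaces in $M\setminus\{\delta_v^{k;j}\}$) identifies $Z'=Y_{M''}$ for a set $M''$ of inner vertical hyperfaces at level $k$ in the smaller cell with $|M''|=|M|-1$, and the inductive hypothesis closes the argument.

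The hard part will be the pullback bookkeeping in the intermediate step $i<N$: pinpointing exactly which vertical hyperfaces of $\cell[n-1;\pp]$ fail to land in $Z_i$. This amounts to rerunning the case analysis from the proof of \cref{part 3}, and while somewhat tedious, the required combinatorics is essentially the same and relies only on \cref{dominating shuffle,immediate predecessor of shuffle,points} rather than any genuinely new ingredient.
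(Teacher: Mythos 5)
Your proposal is correct and takes essentially the same route as the paper: the paper's \cref{generalised alternative horns} runs your induction top-down (peeling a minimal shuffle off an upward-closed set of missing $k$-th horizontal hyperfaces) rather than bottom-up along a linear extension, but the intermediate cellular subsets, the gluing square whose pullback is the subobject missing exactly the $\delta_v^{k;j}$ with $j \in \ulcorner\langle\alpha_i,\alpha'_i\rangle$, and your auxiliary lemma on several missing vertical hyperfaces at a single level are all identical to the paper's argument (which likewise recycles the claims from the proof of \cref{part 3}). One harmless inaccuracy: the $k$-th horizontal hyperfaces present in $X_{i-1}$ are not \emph{precisely} those with $\langle\beta,\beta'\rangle \ngeq \langle\alpha_i,\alpha'_i\rangle$ (shuffles incomparable to $\langle\alpha_i,\alpha'_i\rangle$ but placed later in the linear extension are absent), though this does not affect the identification of $Z_i$, since the pullback of any such shuffle is contained in a $\lrcorner$-indexed vertical hyperface that is present in $Z_i$ regardless.
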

Similarly to the proof of \cref{horn inclusions are trivial cofibrations}, we must consider a wider class of horn inclusions.
\begin{definition}\label{horn definition}
	Given a set $S$ of hyperfaces of $\cell\nq$, let $\horn^S\nq \subset \cell\nq$ denote the cellular subset generated by all hyperfaces except for those in $S$.
\end{definition}

\begin{proposition}\label{horn is horn}
    For any set $S$ of inner hyperfaces of $\cell\nq$, the cellular subset $\horn^S\nq \subset \cell\nq$ is equal to $\Upsilon^T\nq$ where $T$ is the set of all inner hyperfaces of $\cell\nq$ that are not in $S$. 
\end{proposition}
\begin{proof}
    Compare \cref{Upsilon,horn definition}.
\end{proof}

Recall that if $S$ is a set of faces of $\cell\nq$ and $1 \le k \le n-1$ then we write
\[
\shuffle_S(q_k,q_{k+1}) = \left\{\langle\alpha,\alpha'\rangle \in (q_k,q_{k+1}) : \delta_h^{k;\langle\alpha,\alpha'\rangle} \in S\right\}.
\]
\begin{lemma}\label{generalised alternative horns}
	The inclusion $\horn^S\nq \incl \cell\nq$ is contained in:
	\begin{itemize}
		\item [(i)] $\celll(\H_v)$ if $S$ is a non-empty set of $k$-th vertical hyperfaces for some $1 \le k \le n$; and
		\item [(ii)] $\celll(\H'_h \cup \H_v)$ if $S$ is a non-empty set of $k$-th horizontal hyperfaces for some $1 \le k \le n-1$ and $\shuffle_S(q_k,q_{k+1})$ is upward closed.
	\end{itemize}
\end{lemma}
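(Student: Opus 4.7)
The natural strategy is to prove (i) and (ii) together by induction on $|S|$. The base case $|S|=1$ is immediate: $\horn^S\nq\incl\cell\nq$ is then literally an element of $\H_v$ in case (i) and of $\H'_h$ in case (ii).

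For the inductive step, pick $\delta\in S$ and factor the inclusion as $\horn^S\nq\incl\horn^{S'}\nq\incl\cell\nq$ where $S'=S\setminus\{\delta\}$. The choice must leave $S'$ satisfying the hypothesis of the same case: any $\delta$ works for (i), and for (ii) I would take $\delta=\delta_h^{k;\langle\alpha,\alpha'\rangle}$ with $\langle\alpha,\alpha'\rangle$ a minimum of the upward-closed $\shuffle_S(q_k,q_{k+1})$, since removing a minimum preserves upward closedness. The second factor then lies in the desired class by the inductive hypothesis, and the first factor is a pushout of $X\incl\cell\mp$, where $X$ is the pullback in the gluing square
\[
\begin{tikzcd}[row sep = large]
X \arrow[r, hook, "\subset"] \arrow[d] \glue & \cell\mp \arrow[d, "\delta"] \\
\horn^S\nq \arrow[r, hook, "\subset"] & \horn^{S'}\nq
\end{tikzcd}
\]
and $\mp$ is the domain of $\delta$. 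The crux is to identify $X$ as $\horn^T\mp$ for some set $T$ of hyperfaces of $\mp$ satisfying the hypothesis of (i) or (ii) with $|T|<|S|$, then appeal to the inductive hypothesis.

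For case (i), with $\delta=\delta_v^{k;i}$, the pullback of any surviving $k$-th vertical hyperface $\delta_v^{k;j}\nq$ along $\delta_v^{k;i}$ is a $k$-th vertical hyperface of $\mp$ (with the index reshuffled by the deletion of $i$), while every other hyperface of $\nq$ pulls back to a (subface of a) non-$k$-vertical hyperface of $\mp$. Thus $T$ becomes a set of $k$-th vertical hyperfaces of $\mp$ with $|T|=|S|-1$, and case (i) of the inductive hypothesis applies.

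The main obstacle is case (ii), where pullbacks of horizontal faces can produce vertical faces of $\mp$. Adapting the combinatorial calculations from the proof of \cref{part 3} (Claims 1--4 there), one expects $X$ to contain every hyperface of $\mp$ except for those $k$-th vertical hyperfaces $\delta_v^{k;j}\mp$ with $j\in\ulcorner\langle\alpha,\alpha'\rangle$: for $j\in\lrcorner\langle\alpha,\alpha'\rangle$ the face $\delta_v^{k;j}\mp$ equals the pullback of $\delta_h^{k;\langle\beta,\beta'\rangle}$ for some immediate predecessor $\langle\beta,\beta'\rangle<\langle\alpha,\alpha'\rangle$ (hence not in $S$) by an analogue of Claim 2(ii) of \cref{part 3}, and for non-corner indices $\delta_v^{k;j}\mp$ arises as the pullback of a surviving inner vertical hyperface via \cref{points} and the analogue of Claim 4. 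Since the inductive step has $|S|\ge 2$, the minimum $\langle\alpha,\alpha'\rangle$ cannot be maximal in $\shuffle(q_k,q_{k+1})$, so $\ulcorner\langle\alpha,\alpha'\rangle$ is non-empty and $T$ is a genuine non-empty set of $k$-th vertical hyperfaces of $\mp$ with $|T|<|S|$; case (i) of the inductive hypothesis then closes the argument. The delicate bookkeeping of exactly which pullbacks cover which hyperfaces of $\mp$ is the main hurdle, but it closely mirrors the combinatorics already performed in Section 3.
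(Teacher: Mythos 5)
Your proposal is correct and follows essentially the same route as the paper: induction on $|S|$, peeling off one face via a gluing square, identifying the pullback as $\horn^T\mp$ with the reindexed vertical faces in case (i) and with $T=\{\delta_v^{k;j}:j\in\ulcorner\langle\alpha,\alpha'\rangle\}$ in case (ii) (nonempty because a minimal element of an upward-closed set of size $\ge 2$ cannot be the maximum shuffle), then closing case (ii) by appealing to case (i). The only nitpick is terminological: you should choose a \emph{minimal} element of $\shuffle_S(q_k,q_{k+1})$ rather than ``a minimum,'' since the shuffle poset is not totally ordered and an upward-closed subset need not have a least element; your argument only ever uses minimality, so nothing breaks.
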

Note that \cref{Oury horizontal horns from alternative ones} follows from \cref{generalised alternative horns}(ii) by setting $S$ to be the set of all $k$-th horizontal hyperfaces of $\cell\nq$.
\begin{proof}
We will prove (i) by induction on $|S|$.
By assumption, we can write $S$ as
\[
S = \bigl\{\delta_v^{k;i} : i \in I_S\bigr\}
\]
for some $1 \le k \le n$ and $\varnothing \neq I_S \subset \{1, \dots, q_k-1\}$.
If $I_S = \{i\}$ is a singleton, then $\horn^S\nq = \horn_v^{k;i}\nq$ and hence the result follows trivially.
So assume $|S| \ge 2$.
Choose $i \in I_S$ and let $S' = \bigl\{\delta_v^{k;j} : j \in I_S \setminus \{i\}\bigr\}$.
Then $\horn^{S'}\nq \incl \cell\nq$ is in $\celll(\H_v)$ by the inductive hypothesis.
Therefore it suffices to prove that the upper horizontal map in the gluing square
\[
\begin{tikzcd}[row sep = large]
X \arrow [r, hook, "\subset"] \arrow [d] \glue & \cell[n;\pp] \arrow [d, "\delta_v^{k;i}"] \\
\horn^S\nq \arrow [r, hook, "\subset"] & \horn^{S'}\nq
\end{tikzcd}
\]
belongs to $\celll(\H_v)$, where $\pp = (q_1,\dots,q_k-1,\dots,q_n)$.
Indeed, one can check that $X = \horn^T[n;\pp]$ where
\[
T = \bigl\{\delta_v^{k;j} : j \in I_S,~j<i\bigr\} \cup \bigl\{\delta_v^{k;j-1} : j \in I_S,~j>i\bigr\}.
\]
Since $|T| = |S|-1$, the desired inclusion is in $\celll(\H_v)$ by the inductive hypothesis.

Now we prove (ii) by induction on $|S|$.
If $S = \bigl\{\delta_h^{k;\langle\alpha,\alpha'\rangle}\bigr\}$ is a singleton then $\horn^S\nq = \horn_h^{k;\langle\alpha,\alpha'\rangle}\nq$ and hence the result follows trivially.
So assume $|S| \ge 2$.
Choose a minimal element $\langle \alpha, \alpha' \rangle \in \shuffle_S(q_k,q_{k+1})$ and let $S' = S \setminus \bigl\{\delta_h^{k;\langle\alpha,\alpha'\rangle}\bigr\}$.
Then by the inductive hypothesis, $\horn^{S'}\nq \incl \cell\nq$ is in $\celll(\H'_h \cup \H_v)$.
Thus it suffices to prove that $\horn^S\nq \incl \horn^{S'}\nq$ too is in $\celll(\H'_h \cup \H_v)$.
Indeed, it follows from \cref{claim 0,claim 1,claim 2,claim 3,claim 4} in the proof of \cref{part 3} that we have a gluing square
\[
\begin{tikzcd}[row sep = large]
\horn^T[n-1;\pp]\arrow [r, hook, "\subset"] \arrow [d] \glue & \cell[n-1;\pp] \arrow [d, "\delta_h^{k;\langle\alpha,\alpha'\rangle}"] \\
\horn^S\nq \arrow [r, hook, "\subset"] & \horn^{S'}\nq
\end{tikzcd}
\]
where $\pp = (q_1,\dots,q_k+q_{k+1},\dots,q_n)$, and $T = \{\delta_v^{k;i} : i \in \ulcorner\langle\alpha,\alpha'\rangle\}$.
(In fact, this square is essentially the first square that appears in the proof of \cref{part 3}.)
Note that $\ulcorner\langle\alpha,\alpha'\rangle = \varnothing$ if and only if $\langle\alpha,\alpha'\rangle$ is the maximum $(q_k,q_{k+1})$-shuffle, but the latter is impossible since $|S| \ge 2$ and $\langle\alpha,\alpha'\rangle$ is minimal in $S$.
Hence $\horn^T[n-1;\pp] \incl \cell[n-1;\pp]$ is in $\celll(\H_v)$ by (i).
\end{proof}

\subsection{Alternative horn inclusions are trivial cofibrations}
The purpose of this subsection is to prove the following lemma.
\begin{lemma}\label{alternative horns are trivial cofibrations}
	Every map in $\H'_h$ is a trivial cofibration.
\end{lemma}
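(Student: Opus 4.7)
The plan is to apply right cancellation against Oury's $k$-th horizontal inner horn inclusion. Since we have the chain
\[
\horn_h^k\nq \subset \horn_h^{k;\langle\alpha,\alpha'\rangle}\nq \subset \cell\nq
\]
and the composite $\horn_h^k\nq \incl \cell\nq$ is a trivial cofibration by Lemma~\ref{horn inclusions are trivial cofibrations}, it suffices to show that the first inclusion is itself a trivial cofibration.

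To do this, I would glue in the missing hyperfaces $\delta_h^{k;\langle\beta,\beta'\rangle}$ for $\langle\beta,\beta'\rangle \in \shuffle(q_k,q_{k+1}) \setminus \{\langle\alpha,\alpha'\rangle\}$ one at a time (in any order). Each stage fits the newly added hyperface into a gluing square
\[
\begin{tikzcd}[row sep = large]
Z
\arrow [r, hook, "\subset"]
\arrow [d]
\glue
&
\cell[n-1;\pp]
\arrow [d, "\delta_h^{k;\langle\beta,\beta'\rangle}"] \\
Y
\arrow [r, hook, "\subset"]
&
Y'
\end{tikzcd}
\]
with $\pp = (q_1,\dots,q_{k-1},q_k+q_{k+1},q_{k+2},\dots,q_n)$ and $Y$ the intermediate subset of $\horn_h^{k;\langle\alpha,\alpha'\rangle}\nq$ constructed so far. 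I would identify $Z$ by rerunning Claims~\ref{claim 0}--\ref{claim 4} from the proof of Lemma~\ref{part 3}, with $\delta_h^{k;\langle\beta,\beta'\rangle}$ in place of the shuffle there. Since $Y \supseteq \horn_h^k\nq$ always contains every outer hyperface of $\cell\nq$, every inner vertical hyperface, and every inner horizontal hyperface of index $\ell \neq k$, the pullback analysis gives: all outer hyperfaces of $\cell[n-1;\pp]$ (by Claim~\ref{claim 0}); every inner vertical hyperface $\delta_v^{\ell;i}[n-1;\pp]$ with $\ell \neq k$ (Claim~\ref{claim 3}), together with every $\delta_v^{k;j}[n-1;\pp]$ outside $\lrcorner\langle\beta,\beta'\rangle \cup \ulcorner\langle\beta,\beta'\rangle$ (Claim~\ref{claim 4} combined with Proposition~\ref{points}); and every inner horizontal hyperface of $\cell[n-1;\pp]$, via Claim~\ref{claim 1} and its dual. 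The pullbacks along $\delta_h^{k;\langle\beta,\beta'\rangle}$ of the other $k$-th horizontal hyperfaces of $\cell\nq$ that happen to lie in $Y$ contribute, by Claim~\ref{claim 2}, only further $\delta_v^{k;j}[n-1;\pp]$'s and never any horizontal hyperface. Hence $Z = \horn^T[n-1;\pp]$ for some (possibly empty) set $T$ consisting solely of $k$-th vertical hyperfaces of $\cell[n-1;\pp]$.

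The proof then concludes by invoking Lemma~\ref{generalised alternative horns}(i): the inclusion $\horn^T[n-1;\pp] \incl \cell[n-1;\pp]$ lies in $\celll(\H_v)$ (and is the identity when $T$ is empty), and is therefore a trivial cofibration by Lemma~\ref{horn inclusions are trivial cofibrations}. The subtlest point to verify is the claim that $Z$ already contains every inner horizontal hyperface of $\cell[n-1;\pp]$: although $\horn_h^k\nq$ lacks the $k$-th horizontal hyperfaces of $\cell\nq$, the $(k+1)$-th horizontal hyperfaces (which \emph{are} present in $Y$) pull back via the dual of Claim~\ref{claim 1} to fill in the $k$-th horizontal hyperfaces of $\cell[n-1;\pp]$. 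This is precisely what forces $T$ to be purely vertical, so that the purely vertical part~(i) of Lemma~\ref{generalised alternative horns} can be applied.
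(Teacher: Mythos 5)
Your right-cancellation reduction (showing $\horn_h^k\nq \incl \horn_h^{k;\langle\alpha,\alpha'\rangle}\nq$ lies in $\celll(\H_v)$ and cancelling against \cref{horn inclusions are trivial cofibrations}) is sound and is a genuinely different top-level route from the paper, which instead proves the stronger \cref{upward closed} by downward induction on upward-closed sets of shuffles, with \cref{part 3} as the base case. The gap is the claim that the missing hyperfaces can be glued \emph{in any order}. For $j \in \lrcorner\langle\beta,\beta'\rangle$ (resp.\ $j \in \ulcorner\langle\beta,\beta'\rangle$) the hyperface $\delta_v^{k;j}[n-1;\pp]$ lies in the pullback $Z$ precisely when the $k$-th horizontal hyperface indexed by the corresponding immediate predecessor (resp.\ successor) of $\langle\beta,\beta'\rangle$ already lies in $Y$ (\cref{claim 2} together with \cref{immediate predecessor of shuffle}); so your $T$ is empty exactly when every Hasse-neighbour of $\langle\beta,\beta'\rangle$ has already been glued. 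This does happen for bad orders: for $\shuffle(2,1) = \{s_0 < s_1 < s_2\}$ with $\langle\alpha,\alpha'\rangle = s_0$, gluing $s_1$ before $s_2$ makes the final step glue $\cell[n-1;\pp]$ along $Z = \horn^\varnothing[n-1;\pp]$. And contrary to your parenthetical, $\horn^\varnothing[n-1;\pp]$ is by definition generated by \emph{all} hyperfaces, i.e.\ it is $\partial\cell[n-1;\pp]$, not $\cell[n-1;\pp]$; a pushout of a boundary inclusion is not a trivial cofibration, so the argument genuinely fails there (and \cref{generalised alternative horns}(i) requires $T$ non-empty in any case).

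The fix is to order the gluings so that each $\langle\beta,\beta'\rangle$ still has a missing Hasse-neighbour when it is glued — e.g.\ the reverse of a search order of the Hasse diagram rooted at $\langle\alpha,\alpha'\rangle$ — which is essentially what the paper's two-stage induction (shuffles $\ngeq \langle\alpha,\alpha'\rangle$ in a downward-closed order via \cref{part 3}, then minimal elements of the remaining upward-closed set via \cref{upward closed}) enforces. There is also a second, subtler point you pass over: to conclude $Z = \horn^T[n-1;\pp]$ \emph{exactly}, you must check that the pullback of every $k$-th horizontal hyperface already present in $Y$ — not only those indexed by Hasse-neighbours — is contained in some hyperface of $\cell[n-1;\pp]$ lying \emph{outside} $T$. \cref{claim 2}(i) only places such a pullback inside $\delta_v^{k;j}[n-1;\pp]$ for \emph{some} corner $j$, which may well belong to $T$; the paper needs the additional \cref{claim 5} precisely to control this, and the analogous statement for your intermediate stages depends on the chosen order. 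Your sketch records what these pullbacks contribute to $Z$ but not that they stay inside $\horn^T[n-1;\pp]$.
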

Once again, we consider a wider class of horn inclusions.
Suppose we have fixed $\nq \in \cell$, $1 \le k \le n-1$ and a $(q_k,q_{k+1})$-shuffle $\langle\zeta,\zeta'\rangle$.
(Note that the inequality $1 \le n-1$ in particular implies that $\nq$ is poly-vertebral.)
Let
\[
I = \bigl\{\langle\alpha,\alpha'\rangle \in \shuffle(q_k,q_{k+1}) : \langle \alpha,\alpha' \rangle \le \langle\zeta,\zeta'\rangle\bigr\}.
\]
\begin{lemma}\label{upward closed}
	If $U$ is a set of the form
	\[
	U = \bigl\{\delta_h^{k;\langle\alpha,\alpha'\rangle} : \langle \alpha,\alpha' \rangle \in I_U\bigr\}
	\]
	for some non-empty, upward closed subset $I_U \subset I$, then the inclusion $\horn^U\nq \incl \cell\nq$ is a trivial cofibration.
\end{lemma}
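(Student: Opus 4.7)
The plan is to induct on $|I| - |I_S|$, using \cref{part 3} for the base case and \cref{generalised alternative horns}(i) to drive the inductive step. For the base case $I_S = I$, let $S^c$ denote the complement of $S$ among the inner hyperfaces of $\cell\nq$, so that $\horn^S\nq = \Upsilon^{S^c}\nq$. Then $\shuffle_{S^c}(q_\ell,q_{\ell+1}) = \shuffle(q_\ell,q_{\ell+1})$ for $\ell \neq k$, while $\shuffle_{S^c}(q_k,q_{k+1}) = \shuffle(q_k,q_{k+1}) \setminus I$ is downward closed as the complement of the upward-closed set $I$. Since $I_S$ is non-empty, $S^c$ is a proper subset of the inner hyperfaces, hence admissible in the sense of \cref{admissible 2}, and \cref{part 3} applies directly.

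For the inductive step with $|I|-|I_S| \ge 1$, I will pick a minimal $\langle\beta,\beta'\rangle \in I \setminus I_S$, set $S' = S \cup \{\delta_h^{k;\langle\beta,\beta'\rangle}\}$ with corresponding $I_{S'} = I_S \cup \{\langle\beta,\beta'\rangle\}$ (which remains downward closed in $I$ by minimality), and invoke the inductive hypothesis on $\horn^{S'}\nq \incl \cell\nq$. Since trivial cofibrations are closed under composition, it then suffices to show that $\horn^S\nq \incl \horn^{S'}\nq$ is a trivial cofibration. This inclusion fits in a gluing square along $\delta_h^{k;\langle\beta,\beta'\rangle} : \cell[n-1;\pp] \to \horn^{S'}\nq$ with $\pp = (q_1,\dots,q_k+q_{k+1},\dots,q_n)$, reducing the task to showing that the pullback $W \incl \cell[n-1;\pp]$ is a trivial cofibration.

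The hard part will be identifying $W$. Reusing the face-by-face pullback calculations of \cref{claim 0,claim 1,claim 2,claim 3,claim 4} from the proof of \cref{part 3} in this context, I expect $W = \horn^T[n-1;\pp]$, where $T$ is the set of inner vertical hyperfaces $\delta_v^{k;j}[n-1;\pp]$ for those $j \in \lrcorner\langle\beta,\beta'\rangle$ whose immediate predecessor (in the sense of \cref{immediate predecessor of shuffle}) lies in $I$ --- equivalently, lies in $I_S$ by minimality of $\langle\beta,\beta'\rangle$. All horizontal, outer, and non-$k$-position vertical hyperfaces of $\cell[n-1;\pp]$ map via $\delta_h^{k;\langle\beta,\beta'\rangle}$ into non-$S$ hyperfaces of $\cell\nq$, while contributions from $j \in \ulcorner\langle\beta,\beta'\rangle$ are excluded because downward-closedness of $I_S$ in $I$ prevents the corresponding immediate successor from lying in $I_S$. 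Moreover, $T$ is non-empty: since $\langle\alpha,\alpha'\rangle$ is the minimum of $I$ and lies in the non-empty downward-closed $I_S$, we have $\langle\beta,\beta'\rangle > \langle\alpha,\alpha'\rangle$ strictly, so any saturated descending chain from $\langle\beta,\beta'\rangle$ down to $\langle\alpha,\alpha'\rangle$ provides an immediate predecessor of $\langle\beta,\beta'\rangle$ inside $I$. Then \cref{generalised alternative horns}(i) places $W \incl \cell[n-1;\pp]$ in $\celll(\H_v)$, which consists of trivial cofibrations by \cref{horn inclusions are trivial cofibrations}, completing the induction.
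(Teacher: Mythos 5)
Your proposal is correct and follows the paper's own proof almost step for step: the same induction on $|I\setminus I_S|$ with base case $I_S=I$ handled by \cref{part 3}, the same choice of a minimal $\langle\beta,\beta'\rangle\in I\setminus I_S$ and gluing square along $\delta_h^{k;\langle\beta,\beta'\rangle}$, and your identification of the pullback as $\horn^T[n-1;\pp]$ (excluding $\delta_v^{k;j}$ for $j\in\lrcorner\langle\beta,\beta'\rangle$ whose associated immediate predecessor lies in $I$, equivalently $\beta(j)\neq\alpha(j)$) is precisely the complement description of the paper's $\Upsilon^T[n-1;\pp]$. The only divergence is the final citation --- you invoke \cref{generalised alternative horns}(i) together with \cref{horn inclusions are trivial cofibrations}, whereas the paper checks admissibility of $T$ and applies \cref{part 3} directly --- but both endings hinge on the same observation that $\langle\beta,\beta'\rangle>\langle\alpha,\alpha'\rangle$ admits an immediate predecessor still in $I$, so they are interchangeable.
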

Here $I_U$ is upward closed in $I$ but not necessarily in $\shuffle(q_k,q_{k+1})$.
Thus in general $\shuffle(q_k,q_{k+1}) \setminus I_U$ is not downward closed, and this is why \cref{upward closed} does not follow directly from \cref{horn is horn,part 3}.

Note that since $\langle\zeta,\zeta'\rangle$ is the maximum element in $I$, any non-empty, upward closed $I_U \subset I$ will always have $\langle\zeta,\zeta'\rangle \in I_U$.
Also observe that \cref{alternative horns are trivial cofibrations} follows from \cref{upward closed} by setting $U = \bigl\{\delta_h^{k;\langle\zeta,\zeta'\rangle}\bigr\}$.
\begin{proof}
We prove \cref{upward closed} by induction on $|I \setminus I_U|$ (so we start with the case $I_U = I$ and progressively make $I_U$ smaller).
For the base case, observe that
\[
\bigl\{\langle\alpha,\alpha'\rangle \in \shuffle(q_k,q_{k+1}) : \langle\alpha,\alpha'\rangle \nleq \langle\zeta,\zeta'\rangle\bigr\}
\]
is an upward closed, proper subset of $\shuffle(q_k,q_{k+1})$.
Thus, when $I_U = I$ the inclusion $\horn^U\nq \incl \cell\nq$ is a trivial cofibration by \cref{horn is horn} and the dual of \cref{part 3}.

For the inductive step, assume $I_U \neq I$.
Choose a maximal element $\langle \alpha,\alpha' \rangle \in I \setminus I_U$ and let $I_{U'} = I_U \cup \{\langle\alpha,\alpha'\rangle\}$.
Then $\horn^{U'}\nq \incl \cell\nq$ is a trivial cofibration by the inductive hypothesis, and hence it suffices to show the upper horizontal map in
\[
\begin{tikzcd}[row sep = large]
X
\arrow [r, hook, "\subset"]
\arrow [d]
\glue &
\cell[n-1;\pp]
\arrow [d, "\delta_h^{k;\langle\alpha,\alpha'\rangle}"]\\
\horn^{U'}\nq
\arrow [r, hook, "\subset"] &
\horn^U\nq
\end{tikzcd}
\]
(where $\pp = (q_1, \dots, q_k+q_{k+1},\dots,q_n)$) is a trivial cofibration.
We again use \cref{part 3}.
More precisely, we claim that $X$ has the form $X = \Upsilon^T[n-1;\pp]$ for
\[
T = T_1 \cup T_2 \cup T_3 \cup T_4 \cup T'_4 \cup T_5
\]
where
\[
\begin{split}
T_1 &= \bigl\{\delta_h^{\ell;\langle \gamma, \gamma' \rangle} [n-1;\pp] : 1 \le \ell \le n-1,~\langle\gamma,\gamma'\rangle\in \shuffle(p_\ell,p_{\ell+1})\bigr\},\\
T_2 &= \bigl\{\delta_v^{k;j}[n-1;\pp]: j \in \lrcorner\langle\alpha,\alpha'\rangle\bigr\},\\
T_3 &= \bigl\{\delta_v^{\ell;j}[n-1;\pp]: \ell \neq k,~1 \le j \le q_\ell-1\bigr\},\\
T_4 &= \bigl\{\delta_v^{k;j}[n-1;\pp] : (\exists i \in [q_k])\left[1 \le i \le q_k-1,~\alpha^{-1}(i)=\{j\}\right]\bigr\},\\
T'_4 &= \bigl\{\delta_v^{k;j}[n-1;\pp] : (\exists i \in [q_{k+1}])\left[1 \le i \le q_{k+1}-1,~(\alpha')^{-1}(i)=\{j\}\right]\bigr\},\\
T_5 &= \bigl\{\delta_v^{k;j}[n-1;\pp] : j \in \ulcorner\langle\alpha,\alpha'\rangle,~\alpha(j) = \zeta(j)\bigr\}.
\end{split}
\]
Aside from $T_5$, these sets are essentially special cases of the sets with the same names in the proof of \cref{part 3}.
More precisely, we have set $S'$ to be the set of inner hyperfaces of $\cell\nq$ that are not in $U'$, then merged $T_1$ and $T'_1$ into a single set and similarly for $T_3$, and unwound the conditions involving $S'$.
Thus for much of the proof that $X = \Upsilon^T[n-1;\pp]$ holds, we can reuse \cref{claim 0,claim 1,claim 2,claim 3,claim 4} from the proof of \cref{part 3}.

The following claim relates the hyperfaces $\delta_h^{k;\langle\beta,\beta'\rangle}$ of $\cell\nq$ with $\langle\beta,\beta'\rangle \notin I$ to the elements of $T_5$.
Note that if $1 \le j \le p_k-1$ and $j \notin \ulcorner\langle\alpha,\alpha'\rangle$ then \cref{points} implies that $\delta_v^{k;j}[n-1;\pp]$ is contained in $T_2$, $T_4$ or $T'_4$.

\begin{claim}\label{claim 5}\hfill
	\begin{itemize}
		\item [(i)] For any $\langle\beta,\beta'\rangle \in \shuffle(q_k,q_{k+1})$ with $\langle\beta,\beta'\rangle \nleq \langle\zeta,\zeta'\rangle$, the pullback of $\delta_h^{k;\langle\beta,\beta'\rangle}$ along $\delta_h^{k;\langle\alpha,\alpha'\rangle}$ is contained in $\delta_v^{k;j}[n-1;\pp]$ for some $1 \le j \le p_k-1$ such that either $j \notin \ulcorner\langle\alpha,\alpha'\rangle$ or $\alpha(j) = \zeta(j)$.
		\item [(ii)] For any $j \in \ulcorner\langle\alpha,\alpha'\rangle$ with $\alpha(j) = \zeta(j)$, the hyperface $\delta_v^{k;j}[n-1;\pp]$ is the pullback of $\delta_h^{k;\langle\beta,\beta'\rangle}$ along $\delta_h^{k;\langle\alpha,\alpha'\rangle}$ for some $\langle\beta,\beta'\rangle \in \shuffle(q_k,q_{k+1})$ with $\langle\beta,\beta'\rangle \nleq \langle\zeta,\zeta'\rangle$.
	\end{itemize}
\end{claim}

\begin{proof}
	For (i), fix $\langle\beta,\beta'\rangle \in \shuffle(q_k,q_{k+1})$ with $\langle\beta,\beta'\rangle \nleq \langle\zeta,\zeta'\rangle$.
	Note that if $\beta(j) \neq \alpha(j)$ for some $j \in [q_k+q_{k+1}]$, then the pullback of $\delta_h^{k;\langle\beta,\beta'\rangle}$ along $\delta_h^{k;\langle\alpha,\alpha'\rangle}$ is contained in $\delta_v^{k;j}[n-1;\pp]$.
	Thus it suffices to prove that there exists some $j \in [q_k+q_{k+1}]$ such that $\beta(j) \neq \alpha(j)$ and either $j \notin \ulcorner\langle\alpha,\alpha'\rangle$ or $\alpha(j) = \zeta(j)$.
	
	Suppose otherwise.
	Then in particular $\beta(j) = \alpha(j)$ for all $j \in [q_k+q_{k+1}] \setminus \ulcorner\langle\alpha,\alpha'\rangle$.
	Since $\ulcorner\langle\alpha,\alpha'\rangle$ contains no two consecutive integers, it follows that $\beta(j) \neq \alpha(j)$ implies $\beta(j) = \alpha(j)+1$ for any $j \in [q_k+q_{k+1}]$.
	Now for each $j\in [q_k+q_{k+1}]$:
	\begin{itemize}
		\item if $\beta(j) = \alpha(j)$ then $\beta(j) \le \zeta(j)$ because $\langle\alpha,\alpha'\rangle < \langle\zeta,\zeta'\rangle$; and
		\item if $\beta(j) \neq \alpha(j)$, then $\alpha(j) < \zeta(j)$ by assumption and hence $\beta(j) = \alpha(j)+1 \le \zeta(j)$.
	\end{itemize}
	Therefore we have $\langle\beta,\beta'\rangle \le \langle\zeta,\zeta'\rangle$, which is the desired contradiction.
	
	To prove (ii), let $j \in \ulcorner\langle\alpha,\alpha'\rangle$ and suppose $\alpha(j) = \zeta(j)$.
	Then $\langle \beta,\beta'\rangle \nleq \langle\zeta,\zeta'\rangle$ for the immediate successor $\langle \beta,\beta'\rangle$ of $\langle\alpha,\alpha'\rangle$ corresponding to $j$, and $\delta_v^{k;j}[n-1;\pp]$ is the pullback of $\delta_h^{k;\langle\beta,\beta'\rangle}$ along $\delta_h^{k;\langle\alpha,\alpha'\rangle}$.
\end{proof}
We can deduce $X = \Upsilon^T[n-1;\pp]$ from \cref{claim 0,claim 1,claim 2,claim 3,claim 4,claim 5}, and it remains to check that $T$ is admissible, \emph{i.e.}~$T$ satisfies \cref{admissible 2}(i-iii).
Since it contains all of the inner horizontal hyperfaces of $\cell[n-1;\pp]$, $T$ clearly satisfies (ii) and (iii).
For (i), observe that $\langle\alpha,\alpha'\rangle < \langle\zeta,\zeta'\rangle$ implies there exists an immediate successor $\langle\beta,\beta'\rangle$ of $\langle\alpha,\alpha'\rangle$ such that $\langle\beta,\beta'\rangle \le \langle\zeta,\zeta'\rangle$.
If $j \in \ulcorner\langle\alpha,\alpha'\rangle$ is the element corresponding to $\langle\beta,\beta'\rangle$, then $\alpha(j) = \beta(j)-1 < \zeta(j)$ and so $T$ does not contain $\delta_v^{k;j}[n-1;\pp]$.
Therefore $T$ is not the set of all inner hyperfaces of $\cell[n-1;\pp]$.
\end{proof}

\section{Most horizontal equivalence extensions are redundant}\label{horizontal equivalence section}
The aim of this very short section is to prove the following lemma.
\begin{lemma}\label{horizontal equivalence extensions}
	For any $[0] \neq \nq \in \cell$, the horizontal equivalence extension
	\[
	(\cell[0] \overset{e}{\hookrightarrow} J) \hat \times (\partial\cell\nq \incl \cell\nq)
	\]
	is contained in $\celll(\H_h)$.
\end{lemma}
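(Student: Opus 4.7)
Setting $M := (J \times \partial\cell\nq) \cup (\{\lozenge\} \times \cell\nq)$, the map in question is $M \incl J \times \cell\nq$. The plan is to follow the strategy used to prove \cref{vertical equivalence extensions}: identify the non-degenerate cells of $J \times \cell\nq$ lying outside $M$ and attach them one at a time, verifying that each attachment is a pushout of an inner horizontal horn inclusion.

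The cells to attach are those $(j, [\gamma;\boldsymbol{\gamma}]): \cell[m;\pp]\to J\times\cell\nq$ with $[\gamma;\boldsymbol{\gamma}]$ a surjection onto $\nq$ (so the cell does not lie in $J\times\partial\cell\nq$) and $j:[m]\to J$ not constant at $\lozenge$ (so the cell does not lie in $\{\lozenge\}\times\cell\nq$). Since $\nq\neq[0]$, any such non-degenerate cell has $m\geq 2$, giving us the room required to apply inner horizontal horns (which need $1\leq k\leq m-1$). I would order these cells lexicographically by $\dim[m;\pp]$ and then by some secondary measure reflecting the complexity of $j$ (e.g.~the number of $\blacklozenge$-vertices). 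At each step one chooses an inner horizontal index $k$, typically at a position where $j$ transitions between $\lozenge$ and $\blacklozenge$, such that all hyperfaces of $\cell[m;\pp]$ other than the $k$-th horizontal ones are already in the current subobject.

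An alternative (and perhaps cleaner) approach reduces the claim to a classical simplicial fact via the box product formalism. Since $\partial\cell\nq\incl\cell\nq$ equals $\hat\square_n(\partial\Delta[n]\incl\Delta[n];\,\partial\Delta[q_i]\incl\Delta[q_i])$, and since $J\times\square_n(-;\dots)\cong\square_n(J\times-;\dots)$ in the first variable, one can rewrite the horizontal equivalence extension as
\[
\hat\square_n\bigl(e\hat\times(\partial\Delta[n]\incl\Delta[n]);\,\partial\Delta[q_1]\incl\Delta[q_1],\dots,\partial\Delta[q_n]\incl\Delta[q_n]\bigr).
\]
Appealing then to the classical fact that $e\hat\times(\partial\Delta[n]\incl\Delta[n])$ is a simplicial inner anodyne extension for $n\geq 1$, one would check that applying $\hat\square_n(-;\{\partial\Delta[q_i]\incl\Delta[q_i]\})$ turns pushouts of simplicial inner horn inclusions into pushouts of inner horizontal horn inclusions.

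The main obstacle in either approach is combinatorial. In the direct approach it is the bookkeeping required to show that at each stage the correct hyperfaces are available for the horn-filling; in the box-product approach it is showing that a simplicial inner horn $\Lambda^k[r]\incl\Delta[r]$ sitting over $\Delta[n]$ via a possibly non-inert map $\alpha:[r]\to[n]$ is taken by $\hat\square_n(-;\{\partial\Delta[q_i]\incl\Delta[q_i]\})$ into an element of $\celll(\H_h)$. The latter requires unpacking the cartesian fibration $\wreath\to\hatdelta$: under $\alpha^*$, the pulled-back fibers are products of simplices $\prod_{\alpha(i-1)<k\leq\alpha(i)}\Delta[q_k]$ rather than single simplices, and one must further decompose these using shuffles to land within $\H_h$.
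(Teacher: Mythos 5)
Your first approach is the right strategy and is indeed the one the paper follows, but as written it is a plan rather than a proof: the entire content of the argument is the ``bookkeeping'' you defer at the end. Concretely, two things go wrong. First, the claim that every non-degenerate cell of $J\times\cell\nq$ outside the domain has $m\ge 2$ is false: for $\nq=[1;0]$ the $1$-cell $(\lozenge,0)\to(\blacklozenge,1)$ is non-degenerate, projects onto the non-degenerate $1$-simplex of $\cell[1;0]$, and hits $\blacklozenge$, so it lies outside $M$ yet admits no inner horn of its own. Second, and relatedly, one cannot attach the missing cells ``one at a time, each as a pushout of an inner horizontal horn inclusion'': pushing out along $\horn_h^{k}\mp\incl\cell\mp$ adjoins the top cell \emph{together with} all of its $k$-th horizontal faces (which may have codimension greater than $1$), so the missing cells must be organised into groups consisting of one ``filler'' $\psi$ plus the faces $\psi\cdot\delta_h^{k_\psi}$ that arrive with it. The paper does exactly this: it interposes an intermediate stage $Y$ (generated by the domain and all cells not containing the vertex $(\blacklozenge,n)$), isolates in each stage a condition (($\dagger$), resp.\ ($\ddagger$)) singling out the filler cells, pairs every other missing cell with a unique filler of which it is the missing horizontal face, and then runs a case analysis over all hyperfaces of each filler to verify that the remaining faces are already present or are degeneracies of earlier fillers. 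None of that appears in your write-up, and the choice of horn index (``typically at a position where $j$ transitions'') and of the secondary ordering are left as guesses.

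Your second, box-product approach has an additional unverified premise before one even reaches the combinatorics: the identification $J\times\square_n(W;\dots)\cong\square_n(J\times W;\dots)$ is not something the paper establishes, and the key reduction --- that $\hat\square_n\bigl(-;\{\partial\Delta[q_i]\incl\Delta[q_i]\}\bigr)$ carries simplicial inner horn inclusions sitting over $\Delta[n]$ via arbitrary (non-inert) maps into $\celll(\H_h)$ --- is essentially as hard as the lemma itself, as you acknowledge. So neither route is complete as it stands; to repair the first one you should produce the explicit filtration, the pairing of cells, and the hyperface case analysis.
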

\begin{proof}
Fix $[0] \neq \nq \in \cell$ and consider $e \hat \times (\partial\cell\nq \incl \cell\nq)$, whose domain we denote by $X$.
Let $Y \subset J \times \cell\nq$ be the cellular subset generated by $X$ and all cells that do not contain the vertex $(\blacklozenge,n)$.
Then for any non-degenerate $\phi : \cell\mp \to Y$ that does not factor through $X$, there is unique $1 \le  k_\phi \le m$ such that $\pi_1 \circ \phi(k_\phi-1) = \blacklozenge$ and $\pi_1 \circ \phi(k) = \lozenge$ for all $k_\phi \le k \le m$, where $\pi_1 : J \times \cell\nq \to J$ is the projection.
Observe that for any non-degenerate $\phi$ in $Y \setminus X$, either $\phi$ satisfies
\begin{itemize}
	\item[($\dagger$)] $\pi_2 \circ \phi(k_\phi) = \pi_2 \circ \phi(k_\phi-1)$
\end{itemize}
or there is a unique non-degenerate cell $\psi$ in $Y \setminus X$ satisfying ($\dagger$) such that $\phi$ is a (unique) $k_\psi$-th horizontal hyperface of $\psi$.
Therefore the non-degenerate cells in $Y \setminus X$ can be partitioned into pairs of the form
\[
\bigl\{\phi, \phi \cdot \delta_h^{k_\alpha;\langle!,\id\rangle}\bigr\}
\]
where $\phi$ is an $(m;\pp)$-cell satisfying ($\dagger$) (which necessarily has $p_{k_\phi} = 0$).
We prove that $Y$ may be obtained from $X$ by gluing such $\phi$ along the horn $\horn_h^{k_\phi}\mp$ in lexicographically increasing order of $\dim\mp$ and $\bigl|(\pi_1\circ\phi)^{-1}(\blacklozenge)\bigr|$.
(Here $|-|$ counts the number of objects.)

Fix an $(m;\pp)$-cell $\phi$ in $Y \setminus X$ satisfying ($\dagger$).
We must check that all hyperfaces of $\phi$ except for the (unique) $k_\alpha$-th horizontal one are contained either in $X$ or in some $(m';\mathbf{p'})$-cell $\psi$ satisfying ($\dagger$) such that either:
\begin{itemize}
	\item $\dim[m';\mathbf{p'}] < \dim\mp$; or
	\item $\dim[m';\mathbf{p'}] = \dim\mp$ and $\bigl|(\pi_1\circ\psi)^{-1}(\blacklozenge)\bigr| < \bigl|(\pi_1\circ\phi)^{-1}(\blacklozenge)\bigr|$.
\end{itemize}
Indeed:
\begin{itemize}
	\item $\phi \cdot \delta_h^{k_\phi-1;\langle\id,!\rangle}$ may or may not satisfy ($\dagger$), but we know
	\[
	\bigl|\bigl(\pi_1\circ(\phi\cdot\delta^{k_\phi-1})\bigr)^{-1}(\blacklozenge)\bigr| = \bigl|(\pi_1\circ\phi)^{-1}(\blacklozenge)\bigr|-1;
	\]
	\item for any $0 \le k < k_\phi-1$ with
	\[
	\bigl|(\pi_2\circ\phi)^{-1}(\pi_2\circ\phi(k))\bigr| \ge 2,
	\]
	any $k$-th horizontal hyperface of $\phi$ is a (possibly trivial) degeneracy of some cell satisfying ($\dagger$) with dimension strictly lower than $\dim\mp$; and
	\item any other hyperface of $\phi$ (excluding the $k_\alpha$-th horizontal one) is contained in $X$.
\end{itemize}
Moreover $\phi(k_\phi-1) = (\blacklozenge, \pi_2\circ\phi(k_\phi))$ and hence $\pi_2\circ\phi(k_\phi) \neq n$.
This implies $k_\phi \neq m$ and it follows that the horn $\horn_h^{k_\phi}\mp$ is inner.
Thus the inclusion $X \incl Y$ is in $\celll(\H_h)$.

Now consider the remaining non-degenerate cells $\phi : \cell\mp \to J \times \cell\nq$ that are not in $Y$.
Let $k_\phi \in [m]$ be the smallest such that $\pi_2\circ\phi(k_\phi) = n$.
Note that $\nq \neq [0]$ implies $k_\phi \neq 0$.
Observe that for any non-degenerate cell $\phi$ in $(J \times \cell\nq)\setminus Y$, either $\phi$ satisfies
\begin{itemize}
	\item[($\ddagger$)] $\pi_1\circ\phi(k_\phi) = \lozenge$
\end{itemize}
or there is a unique non-degenerate cell $\psi$ in $(J \times \cell\nq)\setminus Y$ satisfying ($\ddagger$) such that $\phi$ is a (unique) $k_\psi$-th horizontal hyperface of $\psi$.
Therefore the non-degenerate cells in $(J \times \cell\nq)\setminus Y$ can be partitioned into pairs of the form
\[
\bigl\{\phi,\phi \cdot \delta_h^{k_\phi;\langle\id,!\rangle} \bigr\}
\]
where $\phi$ is an $(m;\pp)$-cell satisfying ($\ddagger$) (which necessarily has $p_{k_\phi+1} = 0$).
We prove that $J \times \cell\nq$ may be obtained from $Y$ by gluing such $\phi$ along the horn $\horn_h^{k_\phi}\mp$ in increasing order of $\dim\mp$.

Fix an $(m;\pp)$-cell $\phi$ in $(J \times \cell\nq)\setminus Y$ satisfying ($\ddagger$).
We must check that all hyperfaces of $\phi$ except for the (unique) $k_\alpha$-th one are contained either in $Y$ or in some cell that satisfies ($\ddagger$) and has dimension strictly smaller than $\dim\mp$.
Indeed:
\begin{itemize}
	\item the unique $(k_\phi+1)$-th horizontal hyperface of $\phi$ (which may be inner or outer depending on whether $k_\phi+1 = m$) is:
	\begin{itemize}
		\item a degeneracy of some non-degenerate cell in $(J \times \cell\nq)\setminus Y$ satisfying ($\ddagger$) of dimension $\dim\mp-2$ if $k_\phi+3 \le m$ (in which case we necessarily have $\phi(k_\phi+3) = (\blacklozenge,n)$); and
		\item contained in $Y$ otherwise;
	\end{itemize}
	\item for any $k_\phi+1 < k \le m$, the unique $k$-th horizontal hyperface of $\phi$ is a (possibly trivial) degeneracy of some cell satisfying ($\ddagger$) of dimension strictly lower than $\dim\mp$;
	\item for any $0 \le k < k_\phi$ with
	\[
	\bigl|(\pi_2\circ\phi)^{-1}(\pi_2\circ\phi(k))\bigr| \ge 2,
	\]
	any $k$-th horizontal hyperface of $\phi$ is a (possibly trivial) degeneracy of some cell satisfying ($\ddagger$) of dimension strictly lower than $\dim\mp$; and
	\item any other hyperface of $\phi$ (excluding the $k_\alpha$-th horizontal one) is contained in $Y$.
\end{itemize}
Moreover, the horn $\horn_h^{k_\alpha}\mp$ is inner since ($\ddagger$) implies $k_\phi \neq m$.
This completes the proof.
\end{proof}

\section{Characterisation of fibrations into 2-quasi-categories}\label{last section}
Recall the sets $\H_h,\H_v,\E_v$ defined in \cref{O-anodyne}, the set $\J_A$ defined in \cref{model structure} and the set $\H'_h$ defined in \cref{alternative horns}.
By combining \cref{Ara characterisation} and all of the results we have proved, we obtain the following theorem.
\begin{theorem}\label{main theorem}
	Let $f : X \to Y$ be a map in $\hattheta$ and suppose that $Y$ is a 2-quasi-category.
	Then the following are equivalent:
	\begin{itemize}
		\item [(i)] $f$ is a fibration with respect to Ara's model structure;
		\item [(ii)] $f$ has the right lifting property with respect to all maps in $\J_A$;
		\item [(iii)] $f$ has the right lifting property with respect to all maps in $\H_h \cup \H_v \cup \E_v \cup \{e\}$; and
		\item [(iv)] $f$ has the right lifting property with respect to all maps in $\H'_h \cup \H_v \cup \E_v \cup \{e\}$.
	\end{itemize}
\end{theorem}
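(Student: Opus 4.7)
The plan is to deduce this characterisation as a capstone, by assembling the lemmas of Sections 2--4 rather than introducing new combinatorics. The equivalence (i) $\Leftrightarrow$ (ii) is exactly Ara's characterisation (\cref{Ara characterisation}), so it remains to relate (ii) with (iii) and (iv).

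For the forward direction (ii) $\Rightarrow$ (iii) (and similarly (ii) $\Rightarrow$ (iv)), I would first note that since $Y$ is fibrant and (ii) holds, $f$ is itself a fibration, hence has the right lifting property with respect to every trivial cofibration. That every map in $\H_h \cup \H_v$ is a trivial cofibration is \cref{horn inclusions are trivial cofibrations}; for $\E_v$ this is \cref{vertical equivalence extensions}; and for $\H'_h$ it is \cref{alternative horns are trivial cofibrations}. As for $e$ itself, the Leibniz product $(\cell[0] \overset{e}\hookrightarrow J) \hat\times (\partial\cell[0] \incl \cell[0])$ collapses to $e$ because $\partial\cell[0] = \emp$, so $e$ is the instance $\nq = [0]$ of $\E_h$ and therefore already belongs to $\J_A$.

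For the converse (iii) $\Rightarrow$ (ii), the key input is \cref{A-anodyne implies O-anodyne}, which gives $\J_A \subset \celll(\J_O) = \celll(\H_h \cup \H_v \cup \E_h \cup \E_v)$; so one need only check lifting against $\E_h$. The case $\nq = [0]$ is precisely $e$, and the remaining case $\nq \neq [0]$ is handled by \cref{horizontal equivalence extensions}, which places $(\cell[0] \overset{e}\hookrightarrow J) \hat\times (\partial\cell\nq \incl \cell\nq)$ inside $\celll(\H_h)$. Combining with the given right lifting against $\H_h, \H_v, \E_v$ and $\{e\}$ yields right lifting against all of $\J_O$, hence against $\J_A$. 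For (iv) $\Rightarrow$ (iii), I would apply \cref{Oury horizontal horns from alternative ones}, which gives $\H_h \subset \celll(\H'_h \cup \H_v)$, so right lifting against $\H'_h \cup \H_v$ propagates automatically to right lifting against $\H_h$.

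There is no real obstacle at this stage: the substantive work has been absorbed into the individual lemmas (in particular the admissibility-and-shuffle arguments underpinning \cref{part 3} and the cell-by-cell gluings in Sections 2.3, 3 and 4). The proof of \cref{main theorem} is purely a matter of routing the lifting properties through the inclusions $\J_A \subset \celll(\J_O)$, $\H_h \subset \celll(\H'_h \cup \H_v)$, and $\E_h \setminus \{e\} \subset \celll(\H_h)$, together with the observation that fibrations between fibrant objects have the right lifting property against every trivial cofibration.
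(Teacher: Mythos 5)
Your proposal is correct and follows essentially the same route as the paper: (i)$\Leftrightarrow$(ii) via \cref{Ara characterisation}, the forward implications by citing \cref{horn inclusions are trivial cofibrations}, \cref{vertical equivalence extensions}, \cref{alternative horns are trivial cofibrations} and the fact that $e \in \E_h \subset \J_A$, and the converses via $\J_A \subset \celll(\J_O)$, $\E_h \subset \{e\} \cup \celll(\H_h)$ and $\H_h \subset \celll(\H'_h \cup \H_v)$. The only cosmetic difference is that you route the forward implications through (ii) rather than (i), which is immaterial given their equivalence.
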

\begin{proof}
	(i) $\Leftrightarrow$ (ii):
	This equivalence is part of \cref{Ara characterisation}.
	
	(i) $\Rightarrow$ (iii):
	The elements of $\H_h$ and $\H_v$ are trivial cofibrations by \cref{horn inclusions are trivial cofibrations}, and similarly for $\E_v$ by \cref{vertical equivalence extensions}.
	The horizontal equivalence extension $e$ is also a trivial cofibration since $e \in \E_h \subset \J_A$.
	
	(iii) $\Rightarrow$ (ii):
	We have the containment $\J_A \subset \celll(\J_O) = \celll(\H_h \cup \H_v \cup \E_h \cup \E_v)$ by \cref{A-anodyne implies O-anodyne}.
	But $\E_h \subset \{e\} \cup \celll(\H_h)$ holds by \cref{horizontal equivalence extensions} and so $\celll(\J_O) = \celll(\H_h \cup \H_v \cup \E_v \cup \{e\})$.
	
	(i) $\Rightarrow$ (iv):
	The elements of $\H'_h$ are trivial cofibrations by \cref{alternative horns are trivial cofibrations}.
	
	(iv) $\Rightarrow$ (iii):
	This follows from the containment $\H_h \subset \celll(\H'_h \cup \H_v)$, which is precisely \cref{Oury horizontal horns from alternative ones}.
\end{proof}

Since $e$ admits a retraction, we obtain the following corollary by setting $Y$ to be the terminal cellular set $\cell[0]$.
\begin{corollary}\label{fibrant}
	Let $X \in \hattheta$ be a cellular set.
	Then the following are equivalent:
	\begin{itemize}
		\item [(i)] $X$ is a 2-quasi-category;
		\item [(ii)] $X$ has the right lifting property with respect to all maps in $\J_A$;
		\item [(iii)] $X$ has the right lifting property with respect to all maps in $\H_h \cup \H_v \cup \E_v$; and
		\item [(iv)] $X$ has the right lifting property with respect to all maps in $\H'_h \cup \H_v \cup \E_v$.
	\end{itemize}
\end{corollary}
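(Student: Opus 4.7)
The plan is to deduce this corollary as a direct application of \cref{main theorem} to the unique map $! : X \to \cell[0]$, after checking that the hypothesis ``$Y$ is a 2-quasi-category'' is vacuously satisfied (since $\cell[0]$ is representable, hence a 2-quasi-category as any fibrant object must be) and that the map $e$ may be dropped from the test set when the codomain is terminal.

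First I would observe that by Ara's characterisation (or by direct inspection of the 2-categorical picture), the fibrations into $\cell[0]$ are precisely the maps $X \to \cell[0]$ where $X$ is fibrant, i.e.\ a 2-quasi-category. So (i) in the corollary is equivalent to $!_X : X \to \cell[0]$ being a fibration in the sense of \cref{main theorem}(i), and similarly (ii) here is the specialisation of (ii) in \cref{main theorem}. Thus \cref{main theorem} already gives the equivalences of (i), (ii), and the variants of (iii), (iv) that include the extra map $\{e\}$ in the test set.

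The only thing left to establish is that, when the target is $\cell[0]$, the right lifting property against $e : \cell[0] \hookrightarrow J$ is automatic, so that it can be removed from the test sets. For this I would exhibit a retraction $r : J \to \cell[0]$ of $e$; concretely, $J$ is the nerve of the chaotic category $\{\lozenge \cong \blacklozenge\}$, and the unique functor to the terminal category $\{\lozenge\}$ yields a simplicial map $r$ with $re = \id_{\cell[0]}$. Given any lifting problem
\[
\begin{tikzcd}
\cell[0] \arrow[r, "g"] \arrow[d, hook, "e"'] & X \arrow[d, "!"] \\
J \arrow[r] & \cell[0]
\end{tikzcd}
\]
the composite $gr : J \to X$ is a lift, since $gre = g$ and the lower triangle commutes trivially. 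Hence every map $X \to \cell[0]$ has the right lifting property against $e$, and so requiring RLP against $\H_h \cup \H_v \cup \E_v \cup \{e\}$ (respectively $\H'_h \cup \H_v \cup \E_v \cup \{e\}$) is equivalent to requiring RLP against $\H_h \cup \H_v \cup \E_v$ (respectively $\H'_h \cup \H_v \cup \E_v$).

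There is no real obstacle here: the main theorem does the heavy lifting, and the only novelty is the routine retraction argument. The one point to keep in mind is just that $\cell[0]$ is itself a 2-quasi-category, which is needed to apply \cref{main theorem}; this is immediate since representables are always fibrant in a model structure whose cofibrations are the monomorphisms and whose fibrations and trivial cofibrations have been described, but one can also see it directly from the fact that $\cell[0]$ has a unique cell in each dimension, so every lifting problem against a monomorphism is trivially solvable.
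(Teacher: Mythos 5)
Your argument is correct and is exactly the paper's proof, which reads in full: ``Since $e$ admits a retraction, we obtain the following corollary by setting $Y$ to be the terminal cellular set $\cell[0]$'' --- you have simply spelled out the retraction argument and the specialisation of \cref{main theorem}. The one quibble is your parenthetical claim that representables are ``always fibrant in a model structure whose cofibrations are the monomorphisms'' (false in general, e.g.\ $\Delta[1]$ in the Kan--Quillen model structure); but your fallback observation that the terminal object is trivially fibrant is all that is needed, so the proof stands.
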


The following corollary says that, when detecting left Quillen functors out of $\hattheta$, we may replace the infinite family $\E_v$ by a single map $[\id;e] : \cell[1;0] \to \cell[1;J]$.
Note that \cref{elegance} implies $\celll(\I)$ is precisely the set of monomorphisms in $\hattheta$ where
\[
\I = \bigl\{\partial\cell\nq \incl \cell\nq : \nq \in \cell\bigr\}.
\]

\begin{corollary}\label{left Quillen}
	Let $F : \hattheta \to \mathscr{M}$ be a left adjoint functor into a model category $\mathscr{M}$.
	Then the following are equivalent:
	\begin{itemize}
		\item [(i)] $F$ is a left Quillen functor;
		\item [(ii)] $F$ sends each map in $\I$ to a cofibration and each map in $\H_h \cup \H_v \cup \bigl\{e, [\id;e]\bigr\}$ to a trivial cofibration; and
		\item [(iii)] $F$ sends each map in $\I$ to a cofibration and each map in $\H'_h \cup \H_v \cup \bigl\{e, [\id;e]\bigr\}$ to a trivial cofibration
	\end{itemize}
\end{corollary}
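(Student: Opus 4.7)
The plan is to apply the standard criterion for cofibrantly-generated model categories: a left adjoint $F$ out of $\hattheta$ is left Quillen with respect to Ara's model structure iff it sends the generating cofibrations $\I$ to cofibrations and the generating trivial cofibrations $\J_A$ to trivial cofibrations. (Recall that $\celll(\I)$ coincides with the class of all monomorphisms.) The task is therefore to replace $\J_A$ by the smaller, more tractable sets appearing in (ii) or (iii).

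The forward implication (i) $\Rightarrow$ (ii) is immediate, since all the listed maps are cofibrations or trivial cofibrations in Ara's model structure: $\H_h \cup \H_v$ by \cref{horn inclusions are trivial cofibrations}, $e \in \E_h \subset \J_A$, and $[\id;e] \cong (\Psi^1[1;0] \incl \Phi^1[1;0]) \in \E_v$ by \cref{vertical equivalence extensions}. The implication (i) $\Rightarrow$ (iii) is analogous, using \cref{alternative horns are trivial cofibrations} for $\H'_h$.

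For the converse (ii) $\Rightarrow$ (i), the plan is to show that $F$ sends every map in $\J_A$ to a trivial cofibration. By the proof of \cref{main theorem} we have $\J_A \subset \celll(\H_h \cup \H_v \cup \E_v \cup \{e\})$, and since $F$ is cocontinuous it preserves $\celll$-operations; thus it suffices to check that $F$ sends each vertical equivalence extension $\Psi^k\nq \incl \Phi^k\nq$ to a trivial cofibration. I would argue by induction on $\dim\nq$. The base case $\nq=[1;0]$ is covered by the hypothesis on $[\id;e]$. For the inductive step, I would revisit the proof of \cref{vertical equivalence extensions}: there the inclusions $\cell\nq \incl \Psi^k\nq$ and $\cell\nq \incl \Phi^k\nq$ were built as cell complexes in $\H_h \cup \H_v \cup \{[\id;e]\}$ together with (only for $\cell\nq \incl \Psi^k\nq$, and only in the subcase $q_{k+1}=0$) a single pushout of a strictly lower-dimensional vertical equivalence extension $\Psi^k[n-1;\pp] \incl \Phi^k[n-1;\pp]$. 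By the inductive hypothesis combined with (ii), $F$ sends both of these inclusions to trivial cofibrations. The 2-out-of-3 property applied to $F(\cell\nq) \to F(\Psi^k\nq) \to F(\Phi^k\nq)$ then shows that $F(\Psi^k\nq) \to F(\Phi^k\nq)$ is a weak equivalence; since $F$ also sends monomorphisms to cofibrations (by (ii) and the fact that $\celll(\I)$ equals the monos), it is in fact a trivial cofibration.

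Finally, (ii) $\Leftrightarrow$ (iii) follows by combining the already-established (ii) $\Rightarrow$ (i) $\Rightarrow$ (iii) with the reverse (iii) $\Rightarrow$ (ii), which is a consequence of the containment $\H_h \subset \celll(\H'_h \cup \H_v)$ of \cref{Oury horizontal horns from alternative ones}. The main obstacle is the induction in (ii) $\Rightarrow$ (i): one must verify carefully that the cell-complex decompositions in the proof of \cref{vertical equivalence extensions} invoke only the single map $[\id;e]$ (rather than general members of $\E_v$), together with $\H_h$, $\H_v$, and the inductive call on lower-dimensional vertical equivalence extensions, so that the 2-out-of-3 step can be applied after $F$.
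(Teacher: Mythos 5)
Your overall route is the paper's: reduce (ii) $\Rightarrow$ (i) to showing that $F$ sends every vertical equivalence extension to a trivial cofibration, induct on $\dim\nq$ using the cell decompositions from the proof of \cref{vertical equivalence extensions}, and apply two-out-of-three to $F(\cell\nq) \to F(\Psi^k\nq) \to F(\Phi^k\nq)$. The remaining implications are handled exactly as in the paper.

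There is, however, one genuine gap in the very first step. You assert that the "standard criterion for cofibrantly-generated model categories" applies with $\J_A$ playing the role of the generating trivial cofibrations. But $\J_A$ is \emph{not} a set of generating trivial cofibrations for Ara's model structure: \cref{Ara characterisation} only says that the right lifting property against $\J_A$ detects fibrations \emph{whose codomain is fibrant}. (If $\J_A$ generated the trivial cofibrations, RLP against it would detect all fibrations, which is not claimed and is not how Cisinski-type model structures work.) So the implication "$F$ sends $\I$ to cofibrations and $\J_A$ to trivial cofibrations $\Rightarrow$ $F$ is left Quillen" is true but needs a different justification: by adjunction the right adjoint $G$ then sends fibrations between fibrant objects to maps with RLP against $\J_A$ and with fibrant codomain, hence to fibrations by \cref{main theorem}, and one concludes by the Joyal--Tierney criterion (\cite[Lemma 7.14]{JT}) that a left adjoint preserving cofibrations is left Quillen as soon as its right adjoint preserves fibrations between fibrant objects. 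This is precisely the citation the paper supplies at this point; with it inserted, the rest of your argument (including the observation that the decompositions in \cref{vertical equivalence extensions} only ever invoke $\H_h$, $\H_v$, the base case $[\id;e]$, and strictly lower-dimensional vertical equivalence extensions) goes through.
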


\begin{proof}
	(i) $\Rightarrow$ (iii) follows from \cref{horn inclusions are trivial cofibrations,alternative horns are trivial cofibrations}, and (iii) $\Rightarrow$ (ii) follows from \cref{Oury horizontal horns from alternative ones}.
	
	For (ii) $\Rightarrow$ (i), suppose that $F$ satisfies (ii).
	Recall that for any $[1;0] \neq \nq \in \cell$ and any $1 \le k \le n$ satisfying $q_k = 0$, both of $\cell\nq \incl \Psi^k\nq$ and $\cell\nq \incl \Phi^k\nq$ are in
	\[
	\celll\Bigl(\H_h \cup \H_v \cup \bigl\{\Psi^\ell\mp \incl \Phi^\ell\mp : \dim\mp < \dim\nq\bigr\}\Bigr)
	\]
	by \cref{step 0,step 1,step 2,step 3,step 4}.
	It follows by induction on $\dim\nq$ that $F$ additionally sends all vertical equivalence extensions to trivial cofibrations.
	Thus \cref{main theorem} (and \cref{fibrant}) implies that the right adjoint to $F$ preserves fibrant objects and fibrations between them.
	
	Now let $f$ be a trivial cofibration in $\hattheta$.
	Since $F$ sends each map in $\I$ to a cofibration, $F(f)$ is a cofibration.
	Hence by \cite[Lemma 7.14]{JT}, $F(f)$ is trivial if and only if it has the left lifting property with respect to all fibrations between fibrant objects.
	The latter follows from the conclusion of the previous paragraph by taking the adjoint transpose.
\end{proof}

\section{Teaser: lax Gray tensor product}\label{teaser}
This section provides a peek into how the combinatorics developed in this paper will be used in our future work.

In (ordinary) 2-category theory, one can study various \emph{lax} notions where certain diagrams are required to commute not on the nose but up to appropriately coherent comparison 2-cells.
An example of such a notion is the (\emph{lax}) \emph{Gray tensor product} \cite[Theorem I.4.9]{Gray}
\[
\ttensor : \twoCat \times \twoCat \to \twoCat.
\]
If $f: a \to a'$ and $g : b \to b'$ are 1-cells in 2-categories $\A$ and $\B$ respectively, then in $\A \ttensor \B$ the square
\[
\begin{tikzpicture}[scale = 2]
\node at (0,1) {$(a,b)$};
\node at (0,0) {$(a,b')$};
\node at (1.25,1) {$(a',b)$};
\node at (1.25,0) {$(a',b')$};
\draw[->] (0,0.85) -- (0,0.15);
\draw[->] (1.25,0.85) -- (1.25,0.15);
\draw[->] (0.25,0) -- (1,0);
\draw[->] (0.25,1) -- (1,1);
\node[scale = 0.7] at (-0.2,0.5) {$(a,g)$};
\node[scale = 0.7] at (1.45,0.5) {$(a',g)$};
\node[scale = 0.7] at (5/8,-0.1) {$(f,b')$};
\node[scale = 0.7] at (5/8,1.1) {$(f,b)$};
\draw[->, double] (0.25,0.2) -- node[fill = white, scale = 0.9] {$\gamma_{f,g}$} (1,0.8);
\end{tikzpicture}
\]
admits a (not necessarily invertible) comparison 2-cell $\gamma_{f,g}$, and these $\gamma$'s are compatible with the 2-category structures of $\A$ and $\B$ in an appropriate sense.

The Gray tensor product is an indispensable tool in 2-category theory, and it is desirable to have an analogous construction in the $(\infty,2)$-context.
For instance, in their book on derived algebraic geometry \cite{Gaitsgory;Rozenblyum}, Gaitsgory and Rozenblyum listed and exploited various properties such a tensor product of $(\infty,2)$-categories should have (but they did not prove their construction indeed yields a tensor product with those properties).
As we mentioned in the introduction, the content of this paper was originally developed as a tool for proving the following theorem.

\begin{theorem}\label{Gray theorem}
	The 2-quasi-categorical Gray tensor product
	\[
	\tensor : \hattheta \times \hattheta \to \hattheta
	\]
	(defined below) is left Quillen.
\end{theorem}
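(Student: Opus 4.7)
The plan is to apply the bifunctorial analogue of \cref{left Quillen} to reduce the problem to checking that the Leibniz Gray tensor $\hat{\tensor}$ sends pairs of generating maps to appropriate classes. More precisely, it should suffice to show that for all $i, j \in \I$, the map $i\,\hat{\tensor}\,j$ is a monomorphism, and that whenever $i \in \I$ and $j$ belongs to $\H'_h \cup \H_v \cup \{e, [\id;e]\}$ (or symmetrically), the map $i\,\hat{\tensor}\,j$ is a trivial cofibration. The point of using the new characterisation from \cref{main theorem} is that each of these generating trivial cofibrations is assembled, via the box product $\square_n$, from low-dimensional simplicial data on which the action of $\tensor$ can be computed explicitly.

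First I would verify the cofibration condition. Assuming $\tensor$ is defined as the cocontinuous extension of a suitable Gray tensor product on representables $\cell\nq \tensor \cell\mp$, this reduces to a direct combinatorial check: the two Leibniz-factors of $(\partial\cell\np \incl \cell\np)\,\hat{\tensor}\,(\partial\cell\nq \incl \cell\nq)$ pull back to disjoint subsets of cells of $\cell\np \tensor \cell\nq$, which should follow from elegance of the Reedy structure on $\cell$ (\cref{elegance}) applied to both factors simultaneously.

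Next I would handle the trivial cofibration conditions one class at a time. The Leibniz Gray tensors involving the equivalence extensions $e$ and $[\id;e]$ should decompose as iterated gluings of horizontal and vertical equivalence extensions, via arguments parallel to those in \cref{horizontal equivalence section} and the proof of \cref{vertical equivalence extensions}. The principal case is the horn-boundary case: one must show that $(\partial\cell\np \incl \cell\np)\,\hat{\tensor}\,(\horn \incl \cell\nq)$ is a trivial cofibration for each inner horn on the right. Here I would express the codomain as a filtered colimit in which each step fills a horn inside some cell of $\cell\np \tensor \cell\nq$. The flexibility to choose between $\H_h$ and $\H'_h$ afforded by \cref{Oury horizontal horns from alternative ones} will be essential, since different intermediate faces will more naturally present themselves as ``Oury-style'' or as ``single-missing-face'' horns.

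The main obstacle will be organising the bookkeeping for this horn-horn case. The Gray tensor introduces non-invertible comparison 2-cells whose orientation must be tracked, so the non-degenerate cells of $\cell\np \tensor \cell\nq$ form a richer poset than bisimplicial shuffles do; and at each stage of the Leibniz filtration one must identify the already-present faces of a new cell with the complement of a single admissible horn, in the sense of \cref{admissible 1,admissible 2}. I anticipate this requires introducing an ``oriented shuffle'' notion that simultaneously generalises $\shuffle(m,n)$ from \cref{simplicial sets} and Gray's ordering of the 2-cells of $[m] \tensor [n]$, together with a compatible total order refining it, and then invoking \cref{part 3} and \cref{upward closed} at each induction step much as the proof of \cref{vertical equivalence extensions} invokes its inner-horn lemmas. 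Given that framework in place, the remaining verification should reduce to careful, but local, combinatorics on the poset of cells of the tensor product.
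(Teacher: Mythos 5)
Your reduction via the binary version of \cref{left Quillen} — checking that $f \hat\tensor g$ is a cofibration for $f,g \in \I$ and a trivial cofibration whenever one argument is in $\I$ and the other is a generating trivial cofibration from \cref{main theorem} — is exactly the strategy the paper announces for \cref{Gray theorem}. Be aware, however, that the paper does not actually prove the theorem here: it explicitly defers the verification of these two conditions to a future paper and only sketches a single instance in \cref{teaser}, so your proposal matches the paper's approach but, like the paper's own treatment, stops short of a complete proof — the combinatorial bookkeeping you flag in your final paragraph is precisely the content that remains open.
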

This theorem corresponds to Proposition 3.2.6 in \cite[Chapter 10]{Gaitsgory;Rozenblyum}, an unproven result in that book.
More precisely, the proposition states that their Gray tensor product preserves $\infty$-categorical colimits in each variable.
Our Gray tensor product preserves (1-categorical) colimits in each variable by construction, and \cref{Gray theorem} allows us to upgrade this preservation property to a homotopical version.

By (the binary version of) \cref{left Quillen}, this theorem follows if we can prove that:
\begin{itemize}
	\item [(i)] $f \hat \tensor g$ is a cofibration for any $f,g \in \I$; and
	\item [(ii)] $f \hat \tensor g$ is a trivial cofibration whenever one of $f$ and $g$ is in $\I$ and the other is in $\H_h \cup \H_v \cup \bigl\{e, [\id;e]\bigr\}$.
\end{itemize}
A general proof of these facts will be given in our future paper.
In this section, we sketch the proof for a particular instance of (ii) in order to illustrate the role that the inner horns will play in that paper.

\begin{remark}
	For \emph{complicial sets} (which model $(\infty,\infty)$-categories), a relatively simple definition of the Gray tensor product was given by Verity in \cite{Verity:I,Verity:II}, where he also proved the complicial counterpart of \cref{Gray theorem} (and much more).
	One drawback of complicial sets is that there is only one obvious duality operation, namely the \emph{odd dual} induced by the automorphism $(-)\op$ on $\Delta$, although one would expect to be able to reverse the $n$-cells for any $n \ge 1$.
	On the other hand, for 2-quasi-categories both the horizontal and vertical duals are easy to describe, but the Gray tensor product does not admit a concrete description.
\end{remark}

\begin{definition}
	We define the \emph{Gray tensor product} $\tensor : \hattheta \times \hattheta \to \hattheta$ of cellular sets by extending the functor
	\[
	\begin{tikzcd}
	\cell \times \cell
	\arrow [r, hook] &
	\twoCat \times \twoCat
	\arrow [r, "\ttensor"] &
	\twoCat
	\arrow [r, "N"] &
	\hattheta
	\end{tikzcd}
	\]
	cocontinuously in each variable.
\end{definition}

We will illustrate how one can use the inner horns to show the Leibniz Gray tensor product
\[
\left(\begin{tikzcd}
{\horn_h^1[2;0,0]}
\arrow [d, hook] \\
{\cell[2;0,0]}
\end{tikzcd}\right)
\hat \tensor
\left(\begin{tikzcd}
{\partial\cell[1;0]}
\arrow [d, hook] \\
{\cell[1;0]}
\end{tikzcd}\right) =
\left(\begin{tikzcd}
{\spine[2;0,0]}
\arrow [d, hook] \\
{\cell[2;0,0]}
\end{tikzcd}\right)
\hat \tensor
\left(\begin{tikzcd}
{\partial\cell[1;0]}
\arrow [d, hook] \\
{\cell[1;0]}
\end{tikzcd}\right)
\]
is a trivial cofibration.
We will take it for granted that (i) above holds and hence this map is at least a cofibration (= monomorphism).

By construction of the Gray tensor product, the codomain $\cell[2;0,0] \tensor \cell[1;0]$ of this map is the nerve of the 2-category that looks like
\[
\begin{tikzpicture}
\filldraw
(0,0) circle [radius = 1pt]
(1,0) circle [radius = 1pt]
(2,0) circle [radius = 1pt]
(0,1) circle [radius = 1pt]
(1,1) circle [radius = 1pt]
(2,1) circle [radius = 1pt];
\draw[->] (0.1,0) -- (0.9,0);
\draw[->] (1.1,0) -- (1.9,0);
\draw[->] (0.1,1) -- (0.9,1);
\draw[->] (1.1,1) -- (1.9,1);
\draw[->] (0, 0.9) -- (0, 0.1);
\draw[->] (1, 0.9) -- (1, 0.1);
\draw[->] (2, 0.9) -- (2, 0.1);
\draw[->, double] (0.3, 0.3) -- (0.7, 0.7);
\draw[->, double] (1.3, 0.3) -- (1.7, 0.7);
\end{tikzpicture}
\]
and its domain is the cellular subset
\[
X = \bigl(\horn_h^1[2;0,0] \tensor \cell[1;0]\bigr) \cup \bigl(\cell[2;0,0] \tensor \partial\cell[1;0]\bigr)
\]
of $\cell[2;0,0] \tensor \cell[1;0]$.
The first part, the cellular subset $\horn_h^1[2;0,0] \tensor \cell[1;0]$, is generated by the nerves of the sub-2-categories
\[
\begin{tikzpicture}[baseline = 12]
\filldraw
(0,0) circle [radius = 1pt]
(1,0) circle [radius = 1pt]
(0,1) circle [radius = 1pt]
(1,1) circle [radius = 1pt];
\filldraw[gray!50!white]
(2,0) circle [radius = 1pt]
(2,1) circle [radius = 1pt];
\draw[->] (0.1,0) -- (0.9,0);
\draw[->, gray!50!white] (1.1,0) -- (1.9,0);
\draw[->] (0.1,1) -- (0.9,1);
\draw[->, gray!50!white] (1.1,1) -- (1.9,1);
\draw[->] (0, 0.9) -- (0, 0.1);
\draw[->] (1, 0.9) -- (1, 0.1);
\draw[->, gray!50!white] (2, 0.9) -- (2, 0.1);
\draw[->, double] (0.3, 0.3) -- (0.7, 0.7);
\draw[->, double, gray!50!white] (1.3, 0.3) -- (1.7, 0.7);
\end{tikzpicture} \hspace{10pt} \text {and} \hspace{10pt}
\begin{tikzpicture}[baseline = 12]
\filldraw
(2,0) circle [radius = 1pt]
(1,0) circle [radius = 1pt]
(2,1) circle [radius = 1pt]
(1,1) circle [radius = 1pt];
\filldraw[gray!50!white]
(0,0) circle [radius = 1pt]
(0,1) circle [radius = 1pt];
\draw[->, gray!50!white] (0.1,0) -- (0.9,0);
\draw[->] (1.1,0) -- (1.9,0);
\draw[->, gray!50!white] (0.1,1) -- (0.9,1);
\draw[->] (1.1,1) -- (1.9,1);
\draw[->, gray!50!white] (0, 0.9) -- (0, 0.1);
\draw[->] (1, 0.9) -- (1, 0.1);
\draw[->] (2, 0.9) -- (2, 0.1);
\draw[->, double, gray!50!white] (0.3, 0.3) -- (0.7, 0.7);
\draw[->, double] (1.3, 0.3) -- (1.7, 0.7);
\end{tikzpicture}
\]
and $\cell[2;0,0] \tensor \partial\cell[1;0]$ is generated by the nerves of
\[
\begin{tikzpicture}[baseline = 12]
\filldraw[gray!50!white]
(0,0) circle [radius = 1pt]
(1,0) circle [radius = 1pt]
(2,0) circle [radius = 1pt];
\filldraw
(0,1) circle [radius = 1pt]
(1,1) circle [radius = 1pt]
(2,1) circle [radius = 1pt];
\draw[->, gray!50!white] (0.1,0) -- (0.9,0);
\draw[->, gray!50!white] (1.1,0) -- (1.9,0);
\draw[->] (0.1,1) -- (0.9,1);
\draw[->] (1.1,1) -- (1.9,1);
\draw[->, gray!50!white] (0, 0.9) -- (0, 0.1);
\draw[->, gray!50!white] (1, 0.9) -- (1, 0.1);
\draw[->, gray!50!white] (2, 0.9) -- (2, 0.1);
\draw[->, double, gray!50!white] (0.3, 0.3) -- (0.7, 0.7);
\draw[->, double, gray!50!white] (1.3, 0.3) -- (1.7, 0.7);
\end{tikzpicture}\hspace{10pt} \text {and} \hspace{10pt}
\begin{tikzpicture}[baseline = 12]
\filldraw
(0,0) circle [radius = 1pt]
(1,0) circle [radius = 1pt]
(2,0) circle [radius = 1pt];
\filldraw[gray!50!white]
(0,1) circle [radius = 1pt]
(1,1) circle [radius = 1pt]
(2,1) circle [radius = 1pt];
\draw[->] (0.1,0) -- (0.9,0);
\draw[->] (1.1,0) -- (1.9,0);
\draw[->, gray!50!white] (0.1,1) -- (0.9,1);
\draw[->, gray!50!white] (1.1,1) -- (1.9,1);
\draw[->, gray!50!white] (0, 0.9) -- (0, 0.1);
\draw[->, gray!50!white] (1, 0.9) -- (1, 0.1);
\draw[->, gray!50!white] (2, 0.9) -- (2, 0.1);
\draw[->, double, gray!50!white] (0.3, 0.3) -- (0.7, 0.7);
\draw[->, double, gray!50!white] (1.3, 0.3) -- (1.7, 0.7);
\end{tikzpicture}
\]
We wish to show that $X \incl \cell[2;0,0] \tensor \cell[1;0]$ is a trivial cofibration.
We separate the non-degenerate cells in $(\cell[2;0,0] \tensor \cell[1;0]) \setminus X$ into six kinds according to their ``silhouette''.
The cells
\[
\begin{tikzpicture}[baseline = 12]
\filldraw[gray!50!white]
(0,0) circle [radius = 1pt]
(1,0) circle [radius = 1pt];
\filldraw
(0,1) circle [radius = 1pt]
(2,0) circle [radius = 1pt];
\draw[->, gray!50!white] (0.1,0) -- (0.9,0);
\draw[->, gray!50!white] (1.1,0) -- (1.9,0);
\draw[->] (0.1,1) -- (2,1) -- (2,0.1);
\draw[->, gray!50!white] (0, 0.9) -- (0, 0.1);
\draw[->, gray!50!white] (1, 0.9) -- (1, 0.1);
\draw[->, double, gray!50!white] (0.3, 0.3) -- (0.7, 0.7);
\draw[->, double, gray!50!white] (1.3, 0.3) -- (1.7, 0.7);
\end{tikzpicture}\hspace{5pt}, \hspace{15pt}
\begin{tikzpicture}[baseline = 12]
\filldraw[gray!50!white]
(0,0) circle [radius = 1pt]
(1,0) circle [radius = 1pt];
\filldraw
(0,1) circle [radius = 1pt]
(2,1) circle [radius = 1pt]
(2,0) circle [radius = 1pt];
\draw[->, gray!50!white] (0.1,0) -- (0.9,0);
\draw[->, gray!50!white] (1.1,0) -- (1.9,0);
\draw[->] (0.1,1) -- (1.9,1);
\draw[->] (2,0.9) -- (2,0.1);
\draw[->, gray!50!white] (0, 0.9) -- (0, 0.1);
\draw[->, gray!50!white] (1, 0.9) -- (1, 0.1);
\draw[->, double, gray!50!white] (0.3, 0.3) -- (0.7, 0.7);
\draw[->, double, gray!50!white] (1.3, 0.3) -- (1.7, 0.7);
\end{tikzpicture}\hspace{5pt}, \hspace{15pt}
\begin{tikzpicture}[baseline = 12]
\filldraw[gray!50!white]
(0,0) circle [radius = 1pt]
(1,0) circle [radius = 1pt];
\filldraw
(1,1) circle [radius = 1pt]
(0,1) circle [radius = 1pt]
(2,0) circle [radius = 1pt];
\draw[->, gray!50!white] (0.1,0) -- (0.9,0);
\draw[->, gray!50!white] (1.1,0) -- (1.9,0);
\draw[->] (0.1,1) -- (0.9,1);
\draw[->] (1.1,1) -- (2,1) -- (2,0.1);
\draw[->, gray!50!white] (0, 0.9) -- (0, 0.1);
\draw[->, gray!50!white] (1, 0.9) -- (1, 0.1);
\draw[->, double, gray!50!white] (0.3, 0.3) -- (0.7, 0.7);
\draw[->, double, gray!50!white] (1.3, 0.3) -- (1.7, 0.7);
\end{tikzpicture}\hspace{10pt} \text {and} \hspace{10pt}
\begin{tikzpicture}[baseline = 12]
\filldraw[gray!50!white]
(0,0) circle [radius = 1pt]
(1,0) circle [radius = 1pt];
\filldraw
(1,1) circle [radius = 1pt]
(0,1) circle [radius = 1pt]
(2,1) circle [radius = 1pt]
(2,0) circle [radius = 1pt];
\draw[->, gray!50!white] (0.1,0) -- (0.9,0);
\draw[->, gray!50!white] (1.1,0) -- (1.9,0);
\draw[->] (0.1,1) -- (0.9,1);
\draw[->] (1.1,1) -- (1.9,1);
\draw[->] (2,0.9) -- (2,0.1);
\draw[->, gray!50!white] (0, 0.9) -- (0, 0.1);
\draw[->, gray!50!white] (1, 0.9) -- (1, 0.1);
\draw[->, double, gray!50!white] (0.3, 0.3) -- (0.7, 0.7);
\draw[->, double, gray!50!white] (1.3, 0.3) -- (1.7, 0.7);
\end{tikzpicture}
\]
have the same silhouette ``$\begin{tikzpicture}[baseline = 1, scale = 0.25] \draw (0,1) -- (2,1) -- (2,0);\end{tikzpicture}$'', and similarly each of the silhouettes ``$\begin{tikzpicture}[baseline = 1, scale = 0.25] \draw (0,1) -- (1,1) -- (1,0) -- (2,0);\end{tikzpicture}$'' and ``$\begin{tikzpicture}[baseline = 1, scale = 0.25] \draw (0,1) -- (0,0) -- (2,0);\end{tikzpicture}$'' has four cells.
There are two cells
\[
\begin{tikzpicture}[baseline = 12]
\filldraw
(2,0) circle [radius = 1pt]
(0,1) circle [radius = 1pt];
\filldraw[gray!50!white]
(0,0) circle [radius = 1pt];
\draw[->, gray!50!white] (0.1,0) -- (0.9,0);
\draw[->] (0.1,1) -- (2,1) -- (2,0.1);
\draw[->] (0.1,0.95) -- (1,0.95) -- (1,0) -- (1.9,0);
\draw[->, gray!50!white] (0, 0.9) -- (0, 0.1);
\draw[->, double, gray!50!white] (0.3, 0.3) -- (0.7, 0.7);
\draw[->, double] (1.3, 0.3) -- (1.7, 0.7);
\end{tikzpicture} \hspace{10pt} \text {and} \hspace{10pt}
\begin{tikzpicture}[baseline = 12]
\filldraw
(2,0) circle [radius = 1pt]
(1,1) circle [radius = 1pt]
(0,1) circle [radius = 1pt];
\filldraw[gray!50!white]
(0,0) circle [radius = 1pt];
\draw[->, gray!50!white] (0.1,0) -- (0.9,0);
\draw[->] (1,0.9) -- (1,0) -- (1.9,0);
\draw[->] (0.1,1) -- (0.9,1);
\draw[->] (1.1,1) -- (2,1) -- (2,0.1);
\draw[->, gray!50!white] (0, 0.9) -- (0, 0.1);
\draw[->, double, gray!50!white] (0.3, 0.3) -- (0.7, 0.7);
\draw[->, double] (1.3, 0.3) -- (1.7, 0.7);
\end{tikzpicture}
\]
of silhouette ``$\begin{tikzpicture}[baseline = 1, scale = 0.25] \filldraw (0,1) -- (1,1) (1,1) -- (2,1) -- (2,0) -- (1,0) -- cycle;\end{tikzpicture}$'', and similarly for ``$\begin{tikzpicture}[baseline = 1, scale = 0.25] \filldraw (2,0) -- (3,0) (1,1) -- (2,1) -- (2,0) -- (1,0) -- cycle;\end{tikzpicture}$''.
Finally, the cells
\[
\begin{tikzpicture}[baseline = 12]
\filldraw
(2,0) circle [radius = 1pt]
(0,1) circle [radius = 1pt];
\draw[->] (0.1,1) -- (2,1) -- (2,0.1);
\draw[->] (0,0.9) -- (0,0)-- (1.9,0);
\draw[->, double] (0.3, 0.3) -- (1.7, 0.7);
\end{tikzpicture} \hspace{10pt} \text {and} \hspace{10pt}
\begin{tikzpicture}[baseline = 12]
\filldraw
(2,0) circle [radius = 1pt]
(0,1) circle [radius = 1pt];
\draw[->] (0.1,1) -- (2,1) -- (2,0.1);
\draw[->] (0,0.9) -- (0,0)-- (1.9,0);
\draw[->] (0.1,0.95) -- (1,0.95) -- (1,0.05) -- (1.9,0.05);
\draw[->, double] (0.3, 0.3) -- (0.7, 0.7);
\draw[->, double] (1.3, 0.3) -- (1.7, 0.7);
\end{tikzpicture}
\]
have silhouette ``$\begin{tikzpicture}[baseline = 1, scale = 0.25] \filldraw (0,0) -- (2,0) -- (2,1) -- (0,1) -- cycle;\end{tikzpicture}$''.
We can associate a cut-point (= a point that disconnects the shape if removed) to each silhouette except for the last one as follows:
\[
\begin{tikzpicture}[baseline = 4, scale = 0.5]
\draw (0,1) -- (2,1) -- (2,0);
\draw[fill = white]
(2,1) circle [radius = 5pt];
\end{tikzpicture} \hspace{5pt},\hspace{15pt}
\begin{tikzpicture}[baseline = 4, scale = 0.5]
\draw (0,1) -- (1,1) -- (1,0) -- (2,0);
\draw[fill = white]
(1,1) circle [radius = 5pt];
\end{tikzpicture} \hspace{5pt},\hspace{15pt}
\begin{tikzpicture}[baseline = 4, scale = 0.5]
\draw (0,1) -- (0,0) -- (2,0);
\draw[fill = white]
(0,0) circle [radius = 5pt];
\end{tikzpicture} \hspace{5pt},\hspace{15pt}
\begin{tikzpicture}[baseline = 4, scale = 0.5]
\filldraw (0,1) -- (1,1) (1,1) -- (2,1) -- (2,0) -- (1,0) -- cycle;
\draw[fill = white]
(1,1) circle [radius = 5pt];
\end{tikzpicture} \hspace{10pt} \text {and} \hspace{10pt}
\begin{tikzpicture}[baseline = 4, scale = 0.5]
\filldraw (2,0) -- (3,0) (1,1) -- (2,1) -- (2,0) -- (1,0) -- cycle;
\draw[fill = white]
(2,0) circle [radius = 5pt];
\end{tikzpicture}
\]
Observe that the set of non-degenerate cells of these silhouettes can then be partitioned into pairs of the form $\bigl\{\phi, \phi \cdot \delta_h^{k_\phi}\bigr\}$ where the $k_\phi$-th vertex of $\phi$ is the cut-point associated to its silhouette.
We can glue such $\phi$ to $X$ along $\horn_h^{k_\phi}$ in increasing order of $\dim\phi$, and then glue the above $(1;2)$-cell of silhouette ``$\begin{tikzpicture}[baseline = 1, scale = 0.25] \filldraw (0,0) -- (2,0) -- (2,1) -- (0,1) -- cycle;\end{tikzpicture}$'' along $\horn_v^{1;1}[1;2]$.
This exhibits the inclusion $X \incl \cell[2;0,0] \tensor \cell[1;0]$ as a member of $\celll(\H_h \cup \H_v)$ and hence as a trivial cofibration by \cref{horn inclusions are trivial cofibrations}.

\section*{Acknowledgements}
The author would like to thank his supervisor Dominic Verity for helpful feedback on earlier versions of this paper.
He also gratefully acknowledges the support of an International Macquarie University Research Training Program Scholarship (Allocation Number: 2017127).
Thanks to the anonymous referees' comments, the readability of this paper has been greatly improved and an error in the original proof of \cref{part 1} has been corrected.

\bibliographystyle{plain}
\bibliography{ref}

\end{document}